\newtheorem{theo}{Theorem} 
\newtheorem{lemma}{Lemma}[section]
\newtheorem{prop}[lemma]{Proposition}
\newtheorem{corol}[lemma]{Corollary}
\theoremstyle{remark}
\newtheorem{remark}[lemma]{Remark}
\theoremstyle{definition}
\newcommand{\smsp}{\kern-.3mm}
\newcommand{\tnorm}[1]{\left|\smsp\left|\smsp\left|#1\right|\smsp\right|\smsp\right|}
\newcommand{\lin}{\textsc{l}}
\newcommand{\RR}{\mathbb{R}}
\newcommand{\eps}{\varepsilon}
\newcommand{\BBB}{\mathcal{B}}
\newcommand{\PPP}{\mathcal{P}}
\newcommand{\vu}{\vec{u}}
\newcommand{\tf}{\widetilde{f}}
\newcommand{\ovt}{\overline{t}}
\newcommand{\ovq}{\overline{q}}
\newcommand{\tu}{\tilde{u}}
\newcommand{\tg}{\tilde{g}}
\newcommand{\tv}{\tilde{v}}
\newcommand{\loc}{\rm loc}
\newcommand{\hdot}{\dot{H}^1}
\newcommand{\HcrA}{\dot{H}^{s_p}}
\newcommand{\HcrB}{\dot{H}^{s_p-1}}
\newcommand{\Hcr}{\HcrA\times \HcrB}
\newcommand{\Hen}{\hdot\times L^2}
\newcommand{\EMPH}[1]{\medskip\noindent\textit{#1}.}
\DeclareMathOperator{\supp}{supp}
\numberwithin{equation}{section} 
\title[Scattering for super-critical wave]{Scattering for radial, bounded solutions of focusing supercritical wave equations}
\author[T.~Duyckaerts]{Thomas Duyckaerts$^1$}
\author[C.~Kenig]{Carlos Kenig$^2$}
\author[F.~Merle]{Frank Merle$^3$}
\thanks{$^1$LAGA, Paris 13 University (UMR 7539). Partially supported by ERC grant DISPEQ and ERC advanced grant
no. 291214, BLOWDISOL}
\thanks{$^2$University of Chicago. Partially supported by NSF Grant DMS-0968472}
\thanks{$^3$Cergy-Pontoise (UMR 8088), IHES, CNRS. Partially supported by ERC advanced grant
no. 291214, BLOWDISOL}
\date{\today}
\begin{document}
\begin{abstract}
In this paper, we consider the wave equation in space dimension $3$ with an energy-supercritical, focusing nonlinearity. We show that any radial solution of the equation which is bounded in the critical Sobolev space is globally defined and scatters to a linear solution. As a consequence, finite time blow-up solutions have critical Sobolev norm converging to infinity (along some sequence of times). The proof relies on the compactness/rigidity method, pointwise estimates on compact solutions obtained by the two last authors, and channels of energy arguments used by the authors in previous works on the energy-critical equation.
\end{abstract}

\maketitle

\tableofcontents

\section{Introduction}
In this paper, we consider the following wave equation on an interval $I$ ($0\in I$)
\begin{equation}
\label{CP}
\left\{ 
\begin{gathered}
\partial_t^2 u -\Delta u-|u|^{p-1}u=0,\quad (x,t)\in \RR^3\times I\\
u_{\restriction t=0}=u_0\in \HcrA,\quad \partial_t u_{\restriction t=0}=u_1\in \HcrB,
\end{gathered}
\right.
\end{equation}
where $u$ is real-valued, $p>5$ and 
\begin{equation}
s_p=\frac{3}{2}-\frac{2}{p-1}.  
\end{equation} 
The equation is locally well-posed in $\HcrA\times \HcrB$. For any initial data $(u_0,u_1)\in \Hcr$, there exists a unique solution $u$ of \eqref{CP} defined on a maximal interval of existence $(T_-(u),T_+(u))$ (for the precise definition of solution that we use see \cite[Definition 2.7]{KeMe11}). If $u$ is a solution, we will denote $\vu=(u,\partial_tu)$.

The main aim of this paper is to extend to the focusing case the results in \cite{KeMe11} which were obtained only in the defocusing case. We show that if a radial solution has the property that the $\Hcr$ norm remains bounded up to the maximal positive time of existence $T_+$, then $T_+$ is infinite and the solution scatters at plus infinity to a linear solution. We thus obtain Theorem 4.1, Corollary 4.2 and Corollary 4.3 of \cite{KeMe11} in the focusing case. Because of the ``concentration-compactness/rigidity theorem" method of the second and third authors (\cite{KeMe08}, \cite{KeMe11}) matters are reduced to establishing a rigidity theorem (Theorem \ref{T:rigidity}) for radial solutions verifying the ``compactness'' or ``nondispersive'' property. In the proof of such rigidity results, the pointwise decay estimates for radial solutions with the ``compactness'' property, established in \cite{KeMe11}, Theorem 3.11, both in the focusing and defocusing cases, are fundamental. The second fundamental ingredient in \cite{KeMe11}, in the defocusing case, for the proof of the rigidity theorem, was the virial identities (or alternatively, as in \cite{KillipVisan11}, \cite{KeMe12}, the Morawetz identity). These identities are not useful in the focusing, supercritical case, and a new method had to be devised. The method we employ here is the ``channel of energy'' method of \cite{DuKeMe12P}, which is not dependent on global integral identities such as the virial identities. The main idea is to show, through the ``channel of energy'' method, that a non-zero radial solution, with the ``compactness'' property must coincide with a constructed stationary solution which can be shown not to be in $\dot{H}^{s_p}$ (because of the work in \cite{JosephLundgren72}). This provides the desired contradiction. 

It is worth noting that as a consequence of our main result, Theorem \ref{T:main}, a radial finite time blow-up solution $u$ (which certainly can exist in the focusing case) of the focusing equation must satify
$$ \limsup_{t\to T_+} \|(u(t),\partial_tu(t))\|_{\Hcr}=+\infty.$$
This is in  stark contrast to the energy critical case, where radial type II blow-up solutions exist (see \cite{KrScTa09}). The point here is that in the energy critical case, our constructed stationary solution is the ground state $W(x)=\left(1+|x|^2/3\right)^{-1/2}$, which in this case clearly is in $\dot{H}^1=\dot{H}^{s_p}$. It is also worth pointing out that, simultaneously to this work, the second author's student Ruipeng Shen \cite{Shen12P} also used the ``channel of energy'' method of \cite{DuKeMe12P} and decay estimates for radial solutions with the ``compactness'' property, in the spirit of \cite{KeMe11}, in conjunction with intricate gain of regularity arguments, to show that analoguous results to those in this paper also hold in the subcritical case $3<p<5$. This showcases the very special role that the energy critical nonlinearity $p=5$ plays in the range $3<p<\infty$. 

Our main results are the Theorem \ref{T:main} below as well as the rigidity result, Theorem \ref{T:rigidity} in Section \ref{S:reduction}.

\begin{theo}
 \label{T:main}
Let $u$ be a radial solution of \eqref{CP} with maximal interval of existence $(T_-,T_+)$ such that 
\begin{equation}
 \label{bounded}
\sup_{t\in (0,T_+)} \|(u(t),\partial_tu(t))\|_{\Hcr}<\infty.
\end{equation} 
Then $u$ is globally defined for positive times and scatters for positive times to a linear solution.
\end{theo}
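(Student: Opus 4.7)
The plan is to follow the concentration--compactness/rigidity scheme of Kenig--Merle, reducing Theorem~\ref{T:main} to the rigidity assertion advertised as Theorem~\ref{T:rigidity}: no nontrivial radial, bounded, ``compact-modulo-scaling'' solution of \eqref{CP} exists.

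\medskip\noindent\textbf{Step 1: Reduction to a critical element.} Assuming Theorem~\ref{T:main} fails, either $T_+<\infty$ or $T_+=+\infty$ with $u$ not scattering. Standard linear and nonlinear profile decomposition in $\Hcr$, combined with the nonlinear perturbation theory and minimization procedure of \cite{KeMe11}, produces a non-zero radial ``critical element'' $u_c$ satisfying \eqref{bounded}, failing to scatter in at least one time direction, and whose trajectory
\[
\lf\{\lf(\lambda(t)^{-\tfrac{2}{p-1}}\,u_c\!\lf(t,\tfrac{\cdot}{\lambda(t)}\rg),\;\lambda(t)^{-\tfrac{2}{p-1}-1}\,\partial_t u_c\!\lf(t,\tfrac{\cdot}{\lambda(t)}\rg)\rg):\,t\in(T_-,T_+)\rg\}
\]
is precompact in $\Hcr$ for some scale function $\lambda(t)>0$. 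Ruling out such a $u_c$ finishes the proof.

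\medskip\noindent\textbf{Step 2: Pointwise decay for $u_c$.} I would invoke Theorem~3.11 of \cite{KeMe11}, which remains valid in the focusing case, to obtain pointwise decay estimates of the form $|u_c(t,x)|\lesssim|x|^{-\frac{2}{p-1}}$ and companion bounds on $\partial_tu_c$. These estimates take the place of the virial/Morawetz identities used in the defocusing setting, which give no useful information for $p>5$ with focusing sign.

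\medskip\noindent\textbf{Step 3: Channel of energy and identification with a stationary solution.} The core of the rigidity argument is the channel-of-energy method of \cite{DuKeMe12P}, whose principal input is an outer-energy lower bound for radial free waves in $\RR^3$: outside an explicit finite-dimensional obstruction, a definite fraction of the linear energy persists in the exterior cone $\{|x|>R+|t|\}$ as $t\to\pm\infty$. I would construct a radial stationary solution $Q$ of
\[
-\Delta Q=|Q|^{p-1}Q\quad\text{on }\RR^3
\]
whose asymptotics at infinity match those of $u_c$ from Step~2, and apply the outer-energy inequality to the difference $u_c-Q$. Precompactness of $\vu_c$ modulo scaling together with this lower bound forces the exterior radiation of $u_c-Q$ to vanish, so that $\vu_c$ coincides with $(Q,0)$ on an exterior region; finite speed of propagation and an ODE analysis along the light cone extend this to $u_c\equiv Q$ on all of $\RR^3$. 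Finally, the Joseph--Lundgren classification \cite{JosephLundgren72} of radial stationary solutions in the supercritical range $p>5$ gives the decay $|Q(x)|\sim|x|^{-\frac{2}{p-1}}$ at infinity, which places $Q$ just outside $\HcrA$. This contradicts \eqref{bounded} applied to $u_c\equiv Q$, yielding Theorem~\ref{T:rigidity} and hence Theorem~\ref{T:main}.

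\medskip\noindent\textbf{Main obstacle.} The delicate point will be Step~3: the outer-energy estimate is anchored at a fixed scale in the exterior $\{|x|>R\}$, whereas $u_c$ lives at the moving scale $\lambda(t)$. Using the pointwise bounds of Step~2 to control $\lambda(t)$ (in particular to prevent it from degenerating), choosing the stationary profile $Q$ so that $u_c-Q$ falls into the non-obstructed part of the channel-of-energy inequality, and then propagating the identification $u_c=Q$ from the exterior all the way to the origin through a unique continuation/characteristic argument is where the bulk of the technical work is concentrated.
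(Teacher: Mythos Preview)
Your high-level strategy matches the paper's, but two points need correction, and the first is a genuine gap.

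\medskip
\textbf{The self-similar scenario is missing.} The critical element produced in Step~1 is compact modulo scaling only on $[0,T_+(u_c))$, with $\lambda$ bounded below only there. The channel-of-energy argument, however, needs compactness in \emph{both} time directions: Lemma~\ref{L:linear} gives outer energy persistence for all $t\ge 0$ \emph{or} for all $t\le 0$, and one uses Corollary~\ref{C:nochannel} in both directions to kill the outer radiation. The paper therefore first reduces (Section~\ref{S:reduction}) the one-sided rigidity Theorem~\ref{T:rigidity} to two separate statements: Proposition~\ref{P:rigidityA}, where compactness holds on the full $(T_-,T_+)$ with $\inf\lambda>0$, and Proposition~\ref{P:rigidityB}, the self-similar finite-time blow-up case $\lambda(t)\sim (T_+-t)^{-1}$. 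The latter is handled by an entirely different argument (self-similar variables and a Lyapunov functional, following \cite{KeMe08}), not by channels of energy. Your sketch omits this dichotomy and the self-similar case altogether; without it the channel argument cannot be launched.

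\medskip
\textbf{The stationary solution and the role of Joseph--Lundgren.} In Step~3 you describe $Q$ as a solution on all of $\RR^3$ with decay $|x|^{-2/(p-1)}$ coming from \cite{JosephLundgren72}. The paper does the opposite. The pointwise bounds give $|u_c(r,t)|\lesssim 1/r$ for large $r$, and one constructs (Proposition~\ref{P:singular}) a \emph{singular} stationary solution $Z_\ell$ on $\RR^3\setminus\{0\}$ with $rZ_\ell(r)\to\ell$, i.e.\ with the fast $1/r$ decay matching $u_c$. The Joseph--Lundgren result is invoked \emph{by contradiction}: if $Z_\ell$ were in $L^{q_p}$ it would extend to a smooth solution on $\RR^3$, and \cite{JosephLundgren72} would then force the slow decay $|x|^{-2/(p-1)}$, contradicting the $1/r$ asymptotics just built. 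Hence $Z_\ell\notin L^{q_p}\supset\HcrA$, and the identification $u_c\equiv Z_\ell$ yields the contradiction. Also, in dimension three the radial outer-energy inequality (Lemma~\ref{L:linear}) has no finite-dimensional obstruction; that feature appears only in other dimensions.
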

The analoguous result was proved in \cite{KeMe11} for the defocusing equation (see \cite{KillipVisan11} for the defocusing equation in  the nonradial case). 

Plan of the paper: Section \ref{S:reduction} is devoted to the reduction of the rigidity theorem, Theorem \ref{T:rigidity}, to two special cases, Proposition \ref{P:rigidityA} and Proposition \ref{P:rigidityB}. The main special case, Proposition \ref{P:rigidityA}, is treated in Section \ref{S:global}, using the ``channel of energy'' method. Section \ref{S:self-sim} deals with the second special case, Proposition \ref{P:rigidityB}, which concerns radial finite time self-similar compact blow-up. This case is dealt with using the corresponding method in \cite{KeMe08} (which is considerably simpler in the radial case).

\section{Reduction of rigidity theorem to two cases}
\label{S:reduction}
By the same arguments than in the defocusing case (see \cite[Section 4]{KeMe11}), the proof of Theorem \ref{T:main} reduces to the proof of the following rigidity theorem:
\begin{theo}
\label{T:rigidity}
 Let $u$ be a solution of \eqref{CP}. Assume that there exists a continuous function $\lambda:[0,T_+(u))\to (0,\infty)$ such that
\begin{equation}
\label{defK+}
K_+=\left\{\left(\frac{1}{\lambda(t)^{\frac{2}{p-1}}}u\left(\frac{\cdot}{\lambda(t)},t\right), \frac{1}{\lambda(t)^{\frac{2}{p-1}+1}}\partial_tu\left(\frac{\cdot}{\lambda(t)},t\right)\right),\; t\in [0,T_+(u))\right\}
\end{equation} 
has compact closure in $\Hcr$ and
\begin{equation}
\label{bound_below+}
 \inf_{t\in [0,T_+(u))} \lambda(t)>0.
\end{equation} 
Then $u=0$.
\end{theo}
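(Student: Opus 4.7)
The proof proceeds by first reducing to two special cases and then treating each. The reduction splits according to whether $T_+(u)$ is finite or infinite. If $T_+<\infty$, the compactness of $K_+$ combined with $\inf\lambda>0$ forces $\lambda(t)\to+\infty$ as $t\to T_+$, and using finite speed of propagation together with the pointwise estimates of [KeMe11, Theorem 3.11], one shows that $\vu(t)$ is supported in the backward light cone from $(0,T_+)$ and evolves in a self-similar fashion, putting us into the framework of Proposition \ref{P:rigidityB}. If $T_+=\infty$, the bound $\inf\lambda>0$ places us under the hypothesis of Proposition \ref{P:rigidityA}.

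For the global case (Proposition A), I would implement the channel of energy strategy of [DuKeMe12P]. The key tool is that for the linear wave equation in $\RR^3$ with radial data, a definite fraction of the initial energy escapes outside every forward light cone $\{|x|>|t|+R\}$, in either positive or negative time, unless the initial data lies in a specific finite-dimensional subspace. I would apply this dichotomy to the solution shifted and rescaled along a sequence of times $t_n$, using the compactness of $K_+$ to extract a profile. The pointwise bounds of [KeMe11, Theorem 3.11] make the nonlinearity $|u|^{p-1}u$ small enough in the exterior region to be absorbed perturbatively. The dichotomy then forces the solution to coincide, outside a large ball, with a stationary solution of $-\Delta Q=|Q|^{p-1}Q$. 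An iterative inward argument (shrinking the exterior region) upgrades this to equality everywhere, so $u\equiv Q$ on its whole domain. To conclude, I would invoke the classification of \cite{JosephLundgren72}: for $p>5$, no nonzero radial solution $Q$ of this elliptic equation lies in $\HcrA$, yielding $u\equiv 0$.

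For the self-similar case (Proposition B), I would follow [KeMe08]. Changing to self-similar variables $y=x/(T_+-t)$, $s=-\log(T_+-t)$, and setting $w(y,s)=(T_+-t)^{2/(p-1)}u(x,t)$, the rescaled solution $w$ is defined on the unit ball $|y|<1$ and satisfies a damped semilinear wave-like equation. The compactness of $K_+$ together with finite speed of propagation gives that $w$ is controlled in a suitable weighted energy space on $|y|<1$. A virial-type identity specific to this self-similar setting (exploiting the damping and the boundedness of the $y$-domain) provides a monotonicity formula forcing the $s$-derivatives of $w$ to vanish, so $w$ is stationary in $s$. A Pohozaev identity on the unit ball, together with the boundary behavior coming from compactness, then yields $w\equiv 0$; this is substantially easier in the radial case since the boundary integrals collapse.

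The main obstacle lies in the global case. The channel of energy dichotomy is clean for the linear equation, but its effective use for nonlinear compact solutions requires delicate matching of scales and careful exploitation of the pointwise decay estimates from [KeMe11]. Propagating from \emph{agreement with a stationary profile outside a large ball} to \emph{agreement everywhere} needs a fine iterative scheme combined with unique continuation for the exterior elliptic problem. Finally, ruling out the stationary profile via Joseph--Lundgren requires that the asymptotic behavior of the constructed $Q$ be precisely compared to the singular solution $c_p|x|^{-2/(p-1)}$, which barely fails to lie in $\HcrA$ for $p>5$.
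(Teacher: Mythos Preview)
Your high-level plan (reduce to a ``global compact'' case and a ``self-similar blow-up'' case, then treat each) matches the paper's architecture, and your sketches for Propositions~\ref{P:rigidityA} and~\ref{P:rigidityB} are broadly in the right spirit. However, the reduction step itself has a genuine gap.

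You assert that if $T_+<\infty$, then compactness plus $\inf\lambda>0$ automatically forces $\lambda(t)\sim 1/(T_+-t)$, placing us in the self-similar setting of Proposition~\ref{P:rigidityB}. This is not automatic: the lower bound $(T_+-t)\lambda(t)\geq c>0$ follows from standard arguments, but there is no a priori \emph{upper} bound of the form $\lambda(t)\leq C/(T_+-t)$. The scaling parameter could grow much faster along subsequences. The paper handles this via a genuine dichotomy (the proof of Proposition~\ref{P:rigidityC}): after replacing $\lambda$ by a monotone envelope, one picks dyadic level times $t_n$ with $\lambda(t_n)=2^n$ and splits into (Case~1) $t_{n+1}-t_n\lesssim 2^{-n}$ for all $n$, which does give $\lambda(t)\approx 1/(T_+-t)$ and feeds into Proposition~\ref{P:rigidityB}; versus (Case~2) $t_{\varphi(n)+1}-t_{\varphi(n)}\gg 2^{-\varphi(n)}$ along a subsequence, in which case one rescales around midpoints $t_n'$ and extracts a limiting solution with \emph{two-sided} compactness and bounded scaling parameter, to which Proposition~\ref{P:rigidityA} applies. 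Establishing that the envelope $\lambda_1$ stays comparable to $\lambda$ (Lemma~\ref{L:interm1}) already requires a separate contradiction argument invoking Proposition~\ref{P:rigidityA}. None of this is in your outline.

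There is also a smaller gap on the other branch: when $T_+=\infty$, the hypotheses of Theorem~\ref{T:rigidity} give compactness only on $[0,\infty)$, while Proposition~\ref{P:rigidityA} requires compactness on the full interval $(T_-,T_+)$. Bridging this (extending the modulated compactness backward in time) is the content of the ``general arguments of Section~6 of \cite{KeMe11}'' that the paper invokes; you should flag that something is needed here.

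Two minor remarks on your sketches. For Proposition~\ref{P:rigidityA}, the paper does not extract a profile along a sequence $t_n$; it works at a fixed time and uses the channel-of-energy inequality in \emph{both} time directions, exploiting that compact solutions carry no outgoing energy (Corollary~\ref{C:nochannel}) to kill the channel term, then identifies $ru_0(r)$ with the asymptotic value $\ell$ and matches to the explicitly constructed singular stationary solution $Z_\ell$. For Proposition~\ref{P:rigidityB}, the endgame is not Pohozaev: after obtaining a nonzero stationary $w_*$ on $B_1$, the paper shows via ODE regularity near $r=1$ that $w_*$ and $w_*'$ vanish at $r=1$, then uses Gronwall/unique continuation to get $w_*\equiv 0$, a contradiction.
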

In this section we explain how to deduce Theorem \ref{T:rigidity} from the two following propositions, proved in Sections \ref{S:global} and \ref{S:self-sim} respectively:
\begin{prop}
 \label{P:rigidityA}
Let $u$ be a radial solution of \eqref{CP}. Assume that there exists a continuous function $\lambda:(T_-(u),T_+(u))\to (0,+\infty)$, such that
\begin{equation}
 \label{defK}
K=\left\{\left(\frac{1}{\lambda(t)^{\frac{2}{p-1}}}u\left(\frac{\cdot}{\lambda(t)},t\right), \frac{1}{\lambda(t)^{\frac{2}{p-1}+1}}\partial_tu\left(\frac{\cdot}{\lambda(t)},t\right)\right),\; t\in (T_-(u),T_+(u))\right\}
\end{equation} 
has compact closure in $\Hcr$ and 
\begin{equation}
\label{bound_below}
 \inf_{t\in (T_-(u),T_+(u))} \lambda(t)>0.
\end{equation} 
Then $u=0$.
\end{prop}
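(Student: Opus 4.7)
The plan is to combine three ingredients: the pointwise decay estimates of \cite[Theorem 3.11]{KeMe11} (valid in both focusing and defocusing cases), the exterior channel-of-energy estimate of \cite{DuKeMe12P} for radial free waves in $\RR^3$, and the Joseph--Lundgren classification \cite{JosephLundgren72} of radial stationary solutions of $\Delta Q + |Q|^{p-1}Q = 0$ on $\RR^3$. The strategy is to show that any $u$ satisfying the hypotheses must coincide on all of $\RR^3\times\RR$ with a radial stationary solution $Q$, and then to derive a contradiction from the fact that no non-trivial such $Q$ lies in $\HcrA$.

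First I would establish $T_\pm(u)=\pm\infty$: the lower bound on $\lambda$ together with the compactness of $K$ prevents any concentration at the endpoints of the maximal interval, by the usual blow-up/profile-decomposition argument. With $u$ global, I would invoke \cite[Theorem 3.11]{KeMe11} to obtain
$$|u(x,t)| \lesssim (1+|x|)^{-\frac{2}{p-1}}, \qquad |\partial_t u(x,t)| \lesssim (1+|x|)^{-\frac{2}{p-1}-1},$$
uniformly in $t \in \RR$, together with the corresponding control on $\nabla u$.

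Next I would match $\vu(t)$ at spatial infinity with a radial stationary solution $Q$ constructed by shooting in the ODE $Q'' + (2/r) Q' + |Q|^{p-1} Q = 0$ from infinity on a region $\{r > r_0\}$. The decay rate $r^{-2/(p-1)}$ is precisely the scaling-critical one, so a one-parameter family $\{Q_\alpha\}$ of such exterior solutions exists, and the uniform pointwise decay of the previous step selects the unique $\alpha^*$ for which $u_0(x) - Q_{\alpha^*}(|x|) = o\bigl(|x|^{-2/(p-1)}\bigr)$ as $|x|\to\infty$. The compactness of $K$ (together with uniqueness of $\alpha^*$ given the leading trace at infinity) forces the same profile $Q := Q_{\alpha^*}$ to match $u(\cdot,t)$ at infinity for \emph{every} $t\in\RR$.

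The heart of the argument, and the main obstacle, is the final step: to apply the radial exterior channel-of-energy estimate of \cite{DuKeMe12P} to the difference $v := u - Q$. That estimate is genuinely a statement about free radial waves, so one must push it through a nonlinear perturbation whose source is controlled precisely because the decay above yields $|u|^{p-1} \lesssim |x|^{-2}$, the borderline weight. Once this bootstrap is in place, the compactness of $K$ forbids any energy of $v$ from escaping to spatial infinity, and the channel estimate forces the initial data $\vec{v}(0)$ into the finite-dimensional exceptional subspace, which is identified in \cite{DuKeMe12P} with radial stationary data. This gives $u(\cdot,t) \equiv Q$ on $\{|x|>r_0\}$ for all $t$; unique continuation and the ODE for $Q$ then extend the equality to $u(x,t) = Q(|x|)$ on all of $\RR^3 \times \RR$. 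Since $u(\cdot,t) \in \HcrA$ by assumption while the Joseph--Lundgren classification rules out any non-zero radial solution of $\Delta Q + |Q|^{p-1}Q = 0$ from $\HcrA$ for $p>5$ (the regular branch decays too slowly at infinity, and the singular branches are too singular at the origin), we conclude $Q \equiv 0$ and therefore $u \equiv 0$.
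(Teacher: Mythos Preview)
Your overall strategy---pointwise decay from \cite{KeMe11}, matching with a stationary profile at infinity, channel-of-energy to force equality, contradiction via Joseph--Lundgren---is exactly the paper's. But two of your key inputs are misidentified, and this is a genuine gap rather than a cosmetic one. First, globality is neither provable by the argument you sketch nor needed: if $T_+<\infty$ the standard argument forces $(T_+-t)\lambda(t)\ge c$, i.e.\ $\lambda(t)\to\infty$, which the \emph{lower} bound on $\lambda$ does not exclude. The paper sidesteps this entirely: when $T_+<\infty$ one has $\supp\vec u(t)\subset B_{T_+-t}$, so the exterior energy vanishes for free (Corollary~\ref{C:nochannel}), and the channel argument runs uniformly in the global and non-global cases.

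The more serious error is the decay rate. The result of \cite{KeMe11} (Propositions~\ref{P:pointwise1}--\ref{P:pointwise2} here) is not the scale-invariant $|u|\lesssim r^{-2/(p-1)}$ but the much stronger $|u(r,t)|\lesssim r^{-1}$ for $r\ge1$. Accordingly the relevant stationary solutions are not those with critical decay $r^{-2/(p-1)}$---that family is a single scaling orbit and carries no matching parameter---but the family $Z_\ell$ of Proposition~\ref{P:singular}, characterized by $rZ_\ell(r)\to\ell$ at infinity. Joseph--Lundgren then says that such a $Z_\ell$ cannot be regular at the origin (a regular solution would have $Z\sim c\,r^{-2/(p-1)}$ at infinity, contradicting $rZ_\ell\to\ell\neq0$), whence $Z_\ell\notin L^{q_p}$. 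With only the critical decay you could not even define the matching parameter, and the potential $|u|^{p-1}\lesssim r^{-2}$ is precisely the borderline you yourself flag; with the correct decay the potential is $\lesssim r^{-(p-1)}$, which is what makes the exterior problem (Lemma~\ref{L:CP}) perturbative. Two further steps are missing from your sketch: the existence of $\ell=\lim_{r\to\infty} r u_0(r)$ does not follow from the bound $|ru_0|\le C$ alone and is itself obtained via a channel-of-energy argument (Lemmas~\ref{L:B8}--\ref{L:B9}); and the case $\ell=0$ needs a separate treatment (Lemma~\ref{L:B10}), since then there is no nonzero $Z_\ell$ to match.
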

\begin{prop}
\label{P:rigidityB}
 There is no radial solution $u$ of \eqref{CP} such that $T_+=T_+(u)<\infty$ and 
\begin{equation}
\label{defK+'}
K_+:=\left\{\left(\frac{1}{(T_+-t)^{\frac{2}{p-1}}}u\left(\frac{\cdot}{T_+-t},t\right), \frac{1}{(T_+-t)^{\frac{2}{p-1}+1}}\partial_tu\left(\frac{\cdot}{T_+-t},t\right)\right),\; t\in [0,T_+)\right\}
\end{equation} 
has compact closure in $\Hcr$.
\end{prop}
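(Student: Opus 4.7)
The plan is to follow the self-similar rigidity scheme of \cite{KeMe08}, which in the radial setting reduces to an ODE analysis. Suppose for contradiction such a $u$ exists, and introduce the self-similar variables
$$s=-\log(T_+-t),\qquad y=\frac{x}{T_+-t},\qquad w(y,s)=(T_+-t)^{\frac{2}{p-1}}u(x,t).$$
By finite speed of propagation, the information-carrying part of $u$ is contained in the backward light cone from $(0,T_+)$, which in these variables corresponds to $B_1:=\{|y|<1\}$ for $s\in[s_0,\infty)$ with $s_0:=-\log T_+$. A direct computation shows $w$ satisfies, on $B_1\times(s_0,\infty)$, an equation of the form
$$\partial_s^2 w+a\,\partial_s w+2\,y\!\cdot\!\nabla\partial_s w-\Div\!\bigl[(I-y\otimes y)\nabla w\bigr]+b\,y\!\cdot\!\nabla w+c\,w=|w|^{p-1}w,$$
with $a,b,c$ depending only on $p$. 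Since $\partial_t u$ translates into $\partial_s w+y\!\cdot\!\nabla w+\tfrac{2}{p-1}w$ after rescaling, compactness of $K_+$ in $\Hcr$ is equivalent to precompactness of the trajectory $s\mapsto(w(\cdot,s),\partial_s w(\cdot,s))$ in the appropriate weighted Sobolev space on $B_1$.

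The second step is to construct a Lyapunov functional. Multiplying the equation by $\partial_s w$ against a weight $(1-|y|^2)^{\gamma}$ with a suitable exponent $\gamma>0$ and integrating over $B_1$, one obtains, after integration by parts, an energy-type functional
$$E(s)=\int_{B_1}\!\!\left[\tfrac12(\partial_s w)^2+\tfrac12|\nabla w|^2-\tfrac12(y\!\cdot\!\nabla w)^2+\text{l.o.t.}-\tfrac{1}{p+1}|w|^{p+1}\right](1-|y|^2)^{\gamma}\,dy$$
whose derivative is $\frac{dE}{ds}(s)=-\kappa\int_{B_1}(\partial_s w)^2(1-|y|^2)^{\gamma-1}\,dy$ for some $\kappa>0$. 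Boundedness of $E$ (which follows from precompactness of the trajectory) combined with integration in $s$ yields
$$\int_{s_0}^{\infty}\!\!\int_{B_1}(\partial_s w)^2(1-|y|^2)^{\gamma-1}\,dy\,ds<+\infty.$$

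Combining this integrability with precompactness, one extracts $s_n\to\infty$ along which $w(\cdot,s_n)\to w_\infty$ and $\partial_s w(\cdot,s_n)\to 0$, so $w_\infty$ is a radial stationary solution of
$$-\Div\!\bigl[(I-y\otimes y)\nabla w_\infty\bigr]+b\,y\!\cdot\!\nabla w_\infty+c\,w_\infty=|w_\infty|^{p-1}w_\infty\quad\text{on }B_1,$$
which in the radial variable $r=|y|\in(0,1)$ is a second-order ODE with a regular singular point at $r=0$ (removable by radial regularity) and a degenerate endpoint at $r=1$. The main obstacle is to show $w_\infty\equiv 0$ within the regularity class forced by the $\Hcr$ hypothesis. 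The plan is a Pohozaev-type identity on $(0,1)$: multiply the ODE by a combination $\alpha\, r\, w_\infty'+\beta\, w_\infty$ with $\alpha,\beta$ chosen so the bulk integrand collects into a sign-definite quadratic form, and check that the boundary contributions vanish at $r=0$ by radial regularity and at $r=1$ because the degenerate weight $(1-r^2)$, together with the critical integrability of $w_\infty$ coming from $\dot H^{s_p}$, forces the relevant traces to be zero. This forces $w_\infty\equiv 0$.

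Once $w_\infty\equiv 0$ for every subsequential limit, precompactness upgrades to $(w(\cdot,s),\partial_s w(\cdot,s))\to 0$ in the natural weighted norm as $s\to\infty$. Undoing the change of variables gives $\|\vu(t_n)\|_{\Hcr}\to 0$ along some $t_n\to T_+$. By local well-posedness with small data the lifespan starting from $\vu(t_n)$ tends to infinity, so for $n$ large the solution extends well beyond $T_+$, contradicting the maximality of $T_+$. This rules out the compact self-similar blow-up profile postulated in Proposition \ref{P:rigidityB}.
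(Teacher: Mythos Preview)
Your outline follows the same architecture as the paper's proof (self-similar variables, a weighted Lyapunov identity yielding $\int\!\!\int(\partial_s w)^2<\infty$, extraction of a stationary profile, showing that profile must vanish, contradiction). Two places in your sketch need correction or more care.

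\textbf{The Lyapunov step.} For $p>5$ the exponent that makes the energy identity close is $\alpha=\tfrac{2}{p-1}-1\in(-1,-\tfrac12)$, which is \emph{negative}; the identity reads $\frac{d}{ds}\widetilde E=-2\alpha\int_0^1(1-r^2)^{\alpha-1}(\partial_s w)^2\,r^2\,dr$, so $\widetilde E$ is \emph{increasing}, not decreasing as you assert. More seriously, with a negative exponent both the energy and the dissipation carry weights that blow up at $|y|=1$, so ``boundedness of $E$ follows from precompactness'' is not available: precompactness in $\Hcr$ does not control $\int(1-r^2)^{\alpha}(\cdot)$. The paper handles this by a $\delta$-regularization: one works with $w_\delta(y,s)=(1+\delta-t)^{2/(p-1)}u((1+\delta-t)y,t)$, which is supported in $B_{1-\delta e^s}$ so all weighted integrals are finite, proves $\widetilde E_\delta(0)\ge -C_0$ uniformly in $\delta$ and $\widetilde E_\delta(s)\to 0$ as $s\to-\log\delta$, and only then lets $\delta\to 0$. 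Without this regularization your integrability claim is formal.

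\textbf{Killing the stationary profile.} You propose a Pohozaev identity, but do not indicate which combination $\alpha\,r\,w_\infty'+\beta\,w_\infty$ makes the bulk sign-definite, nor why the boundary terms at $r=1$ vanish; with the degenerate weight $(1-r^2)^{\alpha+1}$ this is the heart of the matter and is not automatic from $w_\infty\in\dot H^{s_p}$. The paper instead argues directly from the ODE: since $w_*\in H^{s_p}$ is radial and vanishes for $r\ge 1$, one first gets $|w_*(r)|\le C(1-r)^{1/2}$, feeds this into the equation to see $(1-r^2)^{\alpha+1}r^2w_*'(r)$ has a limit $\ell_*$ as $r\to1^-$, uses $w_*'\in L^{(p-1)/2}$ (Sobolev) to force $\ell_*=0$, then a Gronwall-type bound gives $w_*\equiv 0$ near $r=1$, and unique continuation finishes. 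This ODE route is short and uses only the regularity you already have; if you pursue Pohozaev you must supply the missing sign and boundary computations.
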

By the general arguments of Section 6 of \cite{KeMe11}, assuming Propositions \ref{P:rigidityA} and \ref{P:rigidityB}, the proof of Theorem \ref{T:rigidity} reduces to prove the following:
\begin{prop}
\label{P:rigidityC}
 Let $u$ be as in Theorem \ref{T:rigidity}. Then $T_+(u)=\infty$. 
\end{prop}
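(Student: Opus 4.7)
I argue by contradiction: assume $T_+:=T_+(u)<\infty$. Following the template of \cite[Section 6]{KeMe11}, my strategy is to perform a dichotomy on the asymptotic behaviour of $\beta(t):=(T_+-t)\lambda(t)$ as $t\to T_+$, invoking either Proposition \ref{P:rigidityA} or Proposition \ref{P:rigidityB} for a contradiction. A preliminary ingredient is that the local Cauchy theory in $\Hcr$, applied uniformly on the compact set $\overline{K_+}$, produces a $\delta>0$ such that every solution of \eqref{CP} with initial data in $\overline{K_+}$ exists on $(-\delta,\delta)$; applied to the profile $(\lambda(t)^{-2/(p-1)}u(\cdot/\lambda(t),t),\,\lambda(t)^{-2/(p-1)-1}\partial_t u(\cdot/\lambda(t),t))\in K_+$ and undoing the rescaling, the maximality of $T_+$ yields $\lambda(t)(T_+-t)\geq \delta$, i.e., $\beta(t)\geq \delta$ for every $t\in[0,T_+)$. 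Moreover, the scale invariance of the $\Hcr$ norm under the rescaling defining $K_+$, combined with small-data scattering in $\Hcr$, forces $\|(u(t),\partial_t u(t))\|_{\Hcr}\geq \delta_0>0$ uniformly in $t$, so $\overline{K_+}$ does not contain $0$.

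\emph{Case 1: $\limsup_{t\to T_+}\beta(t)=+\infty$.} Pick $t_n\to T_+$ with $\lambda_n:=\lambda(t_n)\to\infty$ and $\lambda_n(T_+-t_n)\to\infty$, and set
\[
\tilde u_n(x,s):=\lambda_n^{-2/(p-1)}\,u\Big(\frac{x}{\lambda_n},\ t_n+\frac{s}{\lambda_n}\Big),
\]
a solution of \eqref{CP} on $I_n=(-\lambda_n t_n,\,\lambda_n(T_+-t_n))$. Since $\inf\lambda>0$ and $t_n\to T_+>0$ give $\lambda_n t_n\to+\infty$, while by assumption $\lambda_n(T_+-t_n)\to+\infty$, the intervals $I_n$ exhaust $\RR$. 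Compactness of $K_+$ lets me pass to a subsequence along which $(\tilde u_n(\cdot,0),\partial_s\tilde u_n(\cdot,0))$ converges in $\Hcr$ to some $(v_0,v_1)\in\overline{K_+}\setminus\{0\}$; let $v$ be the corresponding solution of \eqref{CP}. Continuous dependence together with a diagonalization argument (using that for each $s\in\RR$, the profile at time $t_n+s/\lambda_n$ in its natural scaling $\lambda(t_n+s/\lambda_n)$ lies in $K_+$) shows that $v$ is defined on all of $\RR$ and enjoys the compactness property \eqref{defK} with scaling $\tilde\lambda_\infty$ bounded away from $0$. Proposition \ref{P:rigidityA} then forces $v\equiv 0$, contradicting $(v_0,v_1)\neq 0$.

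\emph{Case 2: $\limsup_{t\to T_+}\beta(t)<+\infty$.} Combined with the preliminary lower bound $\beta(t)\geq\delta$, the function $\beta$ is bounded above and below on $[0,T_+)$. Replacing $\lambda(t)$ by $\tilde\lambda(t):=1/(T_+-t)$ amounts to rescaling each element of $K_+$ by the uniformly bounded (and bounded below) factor $\beta(t)$, so the resulting set $K_+'$ appearing in Proposition \ref{P:rigidityB} also has compact closure in $\Hcr$. Proposition \ref{P:rigidityB} then yields the desired contradiction.

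The main obstacle lies in Case 1: one must verify carefully that the limit $v$ inherits the compactness property on all of $\RR$ with associated scaling bounded away from $0$. The rescaled profiles $\tilde u_n(\cdot,s)$ differ from genuine elements of $K_+$ by the ratio $\lambda(t_n+s/\lambda_n)/\lambda_n$, which is not a priori uniformly controlled as $s$ varies and $n$ grows; a careful diagonal extraction, exploiting continuity of $\lambda$ together with the Cauchy theory and compactness of $K_+$ at each time, is needed to transfer compactness from $K_+$ to the trajectory of $v$, exactly as done in \cite[Section 6]{KeMe11}.
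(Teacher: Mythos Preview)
Your dichotomy on $\beta(t)=(T_+-t)\lambda(t)$ is natural and your Case~2 is correct: once $\beta$ is bounded above and below, replacing $\lambda$ by $1/(T_+-t)$ preserves compactness and Proposition~\ref{P:rigidityB} applies. The genuine problem is in Case~1, and it is more than the ``main obstacle'' paragraph suggests.

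In Case~1 you need to apply Proposition~\ref{P:rigidityA} to the limit $v$, and that proposition requires not only that the rescaled trajectory of $v$ lie in a compact set, but that the scaling satisfy $\inf_{\tau}\tilde\lambda_\infty(\tau)>0$. Your extraction argument does give, for each fixed $\tau\in(T_-(v),T_+(v))$, a finite positive limit $\mu(\tau)=\lim_n \lambda(t_n+\tau/\lambda_n)/\lambda_n$ (using that $v(\tau)\neq 0$ and compactness of $\overline{K_+}$). But nothing in your choice of $t_n$ prevents $\inf_\tau\mu(\tau)=0$: the original $\lambda$ is only assumed continuous and bounded below by a fixed constant, so the ratio $\lambda(t_n+\tau/\lambda_n)/\lambda_n$ can be arbitrarily small as $\tau$ varies, since $\lambda_n\to\infty$. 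A diagonal extraction does not cure this, and the reference to \cite[Section~6]{KeMe11} is misplaced: that section performs the reduction from Theorem~\ref{T:rigidity} to Propositions~\ref{P:rigidityA}--\ref{P:rigidityC}, not the proof of Proposition~\ref{P:rigidityC} itself.

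The paper's proof supplies precisely the missing ingredient. It first proves Lemma~\ref{L:interm1}: there exists $\eps_0>0$ with $\eps_0\lambda(t)\leq\lambda(t')$ whenever $t<t'$. This ``quasi-monotonicity'' is itself nontrivial---its proof is by contradiction and already invokes Proposition~\ref{P:rigidityA}, after choosing $t_n'$ to be the \emph{minimizer} of $\lambda$ on $[t_n,T_+)$ so as to force $\mu_n(\tau)\geq 1$. With Lemma~\ref{L:interm1} in hand, the paper passes to the nondecreasing $\lambda_1(t)=\sup_{[0,t]}\lambda$, and the dichotomy is taken not on $\beta$ but on the gaps between the doubling times $t_n$ defined by $\lambda_1(t_n)=2^n$. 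In the analogue of your Case~1 (large gaps), the midpoint $t_n'$ of a long doubling interval yields $\tfrac12\leq\mu_n(\tau)\leq 2$ for all relevant $\tau$, so the lower bound on the limiting scaling is built in from the start. Without Lemma~\ref{L:interm1} or an equivalent control on the oscillation of $\lambda$, your Case~1 does not go through.
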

The rest of this section is devoted to the proof of Proposition \ref{P:rigidityC}, assuming Propositions \ref{P:rigidityA} and \ref{P:rigidityB}. 

If $R>0$, we denote by $B_R=\{x\in \RR^3,\text{ s.t. } |x|\leq R\}$. Assume to fix ideas that $T_+(u)=1$. By a standard argument (see Lemma 4.14 and 4.15 of \cite{KeMe11}), there exists a constant $c>0$ such that 
\begin{equation}
\label{eq1}
\forall t\in [0,1),\quad (1-t)\lambda(t)\geq c 
\end{equation} 
and
\begin{equation}
\label{support}
\forall t\in [0,1), \quad \supp \vec{u}(t)\subset B_{1-t}.
\end{equation} 
We first prove the following:
\begin{lemma}
\label{L:interm1}
 Let $u$ be as in Theorem \ref{T:rigidity} with $T_+(u)<\infty$. Then there exists $\eps_0>0$ such that
\begin{equation}
\label{conclu_interm1}
\forall (t,t')\in [0,1)^2,\quad t<t'\Longrightarrow \eps_0\lambda(t) \leq \lambda(t')
\end{equation} 
\end{lemma}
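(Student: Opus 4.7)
I argue by contradiction. Suppose there exist sequences $(t_n),(t_n')\subset[0,1)$ with $t_n<t_n'$ and $\mu_n:=\lambda(t_n')/\lambda(t_n)\to 0$. Since \eqref{eq1} gives $\lambda(t_n')\geq c/(1-t_n')\geq c$, we must have $\lambda(t_n)\to+\infty$. Write $\tilde u(t)$ for the rescaled pair from $K_+$. By the precompactness of $K_+$ in $\Hcr$, after extraction $\tilde u(t_n)\to V$ in $\Hcr$ for some $V\in\Hcr$; the scaling defining $K_+$ being an $\Hcr$-isometry and $u\not\equiv 0$ (otherwise the lemma is trivial), we have $V\neq 0$.

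The main tool is the scale-invariance of \eqref{CP}: the function
\[
v_n(x,s) := \lambda(t_n)^{-2/(p-1)}\, u\!\left(\frac{x}{\lambda(t_n)},\, t_n+\frac{s}{\lambda(t_n)}\right)
\]
is again a solution of \eqref{CP} with initial datum $\tilde u(t_n)\to V$. By local well-posedness and continuous dependence in $\Hcr$, there exists $\delta_0>0$ (depending only on $\|V\|_{\Hcr}$) such that $v_n$ is defined on $[0,\delta_0]$ for $n$ large and $v_n\to V^*$ strongly in $C([0,\delta_0];\Hcr)$, where $V^*$ is the nontrivial solution of \eqref{CP} with data $V$. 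Setting $s_n:=\lambda(t_n)(t_n'-t_n)$, a direct calculation yields
\[
v_n(\cdot,s_n) \;=\; \mu_n^{2/(p-1)}\, \tilde u(t_n')(\mu_n\,\cdot),
\]
which converges weakly to $0$ in $\Hcr$ since $\mu_n\to 0$ and $\tilde u(t_n')$ is bounded (a bounded sequence in $\dot H^{s_p}$ rescaled by a factor tending to $0$ converges weakly to $0$).

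If $s_n$ admits a subsequence in $[0,\delta_0]$ converging to some $s^*\in[0,\delta_0]$, then $v_n(\cdot,s_n)\to V^*(s^*)$ strongly in $\Hcr$; combined with the weak limit $0$, this forces $V^*(s^*)=0$, hence $V^*\equiv 0$ by uniqueness, contradicting $V\neq 0$. The main obstacle is the case $s_n\to+\infty$, which is actually the expected behaviour since the spreading from scale $1/\lambda(t_n)$ to scale $1/\lambda(t_n')$ requires $s_n\gtrsim 1/\mu_n\to\infty$. To handle it, I use continuity of $\lambda$ together with compactness of $K_+$ on the intermediate times $\tau_n^{(k)}:=t_n+k\delta_0/\lambda(t_n)$ ($k=1,2,\ldots$). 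Either for some $k$ one has, along a subsequence, $\lambda(\tau_n^{(k)})/\lambda(t_n)\to 0$, in which case the preceding argument applies with $s=k\delta_0$ in place of $s_n$; or the ratios $g_n^{(k)}:=\lambda(\tau_n^{(k)})/\lambda(t_n)$ stay bounded below along every subsequence, so that the rescaled limit solution at each stage is again nontrivial and the well-posedness iteration can be prolonged. Since the total drop $\mu_n\to 0$ is logarithmically unbounded, this iteration must reach the contradictory situation in $O(\log(1/\mu_n))$ steps. The principal technical difficulty lies in managing this iteration uniformly in $n$, ensuring that the local well-posedness window $\delta_0$ does not collapse between successive stages.
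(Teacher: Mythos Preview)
Your iteration in the case $s_n\to+\infty$ has a genuine gap, which you yourself acknowledge. The step ``the preceding argument applies with $s=k\delta_0$'' requires $v_n(k\delta_0)\to V^*(k\delta_0)$ strongly in $\Hcr$, but nothing prevents $T_+(V^*)\leq k\delta_0$: the limit solution may blow up before step $k$, so continuous dependence gives no information there. In the other branch (``ratios stay bounded below''), the lower bound on $g_n^{(k)}$ may depend on $k$, while the number of steps $O(\log(1/\mu_n))$ needed to reach the drop depends on $n$; controlling an $n$-dependent number of iterations with $n$-independent constants is precisely the difficulty you have not resolved, and I do not see how to close it without in effect reproving the rigidity Proposition~\ref{P:rigidityA} inside the lemma.

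The paper avoids all of this by a different choice of rescaling centre. One rescales not at $t_n$ but at the point $t_n'\in[t_n,1)$ where $\lambda$ attains its \emph{minimum} on $[t_n,1)$ (this exists by \eqref{eq1} and continuity). Then automatically the rescaled parameter $\lambda_n(\tau)=\lambda(t_n'+\tau/\lambda(t_n'))/\lambda(t_n')$ satisfies $\lambda_n\geq 1$ on an interval $[\tau_n^-,\tau_n^+]$, with $\tau_n^-=(t_n-t_n')\lambda(t_n')$ and $\tau_n^+=(t_n''-t_n')\lambda(t_n')$ for a point $t_n''>t_n'$ (found via the intermediate value theorem) where $\lambda(t_n'')=\lambda(t_n)$. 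Since $\lambda_n(\tau_n^{\pm})=\lambda(t_n)/\lambda(t_n')\geq n\to\infty$, the interval $[\tau_n^-,\tau_n^+]$ eventually covers the whole maximal interval of the limit solution $v$. Hence $v$ inherits the two-sided compactness hypothesis of Proposition~\ref{P:rigidityA} with $\mu\geq 1$, that proposition yields $v\equiv 0$, and thus $\|\vec u(t_n')\|_{\Hcr}\to 0$, forcing $u\equiv 0$ by small data theory. The idea you are missing is that this lemma is a \emph{reduction} to Proposition~\ref{P:rigidityA}: the contradiction comes from invoking that result, not from a direct iteration.
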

\begin{proof}
\EMPH{Step 1: convergence for a well-chosen sequence of times}
We argue by contradiction. Assume that \eqref{conclu_interm1} does not hold. Then there exist sequences $\{t_n\}_n$, $\{\ovt_n'\}_n$ such that 
\begin{equation}
 \label{eq3}
\forall n,\quad \lambda(\ovt'_n)\leq \frac{1}{n}\lambda(t_n),\quad t_n<\ovt'_n.
\end{equation} 
Note that $\lim_n \ovt_n'=1$. Otherwise we would have (after extraction) $\lim \ovt_n'=t_*\in [0,1)$ with $\lambda(t_*)=0$, a contradiction. 

Fix an index $n$. By \eqref{eq1} and the continuity of $\lambda$, there exists $t_n'\in [t_n,1)$ such that
\begin{equation}
 \label{eq4}
\lambda(t_n')=\min_{t_n\leq t<1} \lambda(t).
\end{equation} 
By \eqref{eq3}, $t_n<t_n'$ and
\begin{equation}
\label{eq5} 
\lambda(t_n')\leq \frac{1}{n}\lambda(t_n).
\end{equation}
By the intermediate value theorem, \eqref{eq1} and \eqref{eq5}, for all $n$ ,there exists $t_n''\in (t_n',1)$ such that
\begin{equation}
 \label{eq6}
\lambda(t_n'')=\lambda(t_n).
\end{equation} 
Define
\begin{equation}
 \label{eq7}
v_n(y,\tau)= \frac{1}{\lambda(t_n')^{\frac{2}{p-1}}}u\left(\frac{y}{\lambda(t_n')},t_n'+\frac{\tau}{\lambda(t_n')}\right),
\end{equation} 
and
\begin{equation*}
 \lambda_n(\tau)= \frac{\lambda\left( t_n'+\frac{\tau}{\lambda(t_n')}\right)}{\lambda\left(t_n'\right)}.
\end{equation*} 
Since $\overline{K}_+$ is compact in $\Hcr$ and $\left(v_n(0),\partial_{\tau}v_n(0)\right)\in K$ for all $n$, there exists $(v_0,v_1)\in \Hcr$ such that (after extraction of a subsequence)
$$ \lim_{n\to \infty} \left\| \left(v_n(0),\partial_{\tau}v_n(0)\right)-(v_0,v_1)\right\|_{\Hcr}=0.$$
Let $v$ be the solution of \eqref{CP} with initial data $(v_0,v_1)$ and $(\tau_-,\tau_+)$ its maximal interval of existence. We will show in the next step that $v$ satisfies the assumptions of Proposition \ref{P:rigidityA}. This implies, by Proposition \ref{P:rigidityA} $(v_0,v_1)=0$, and thus 
$$\lim_{n\to\infty}\|\vec{u}_n(t_n')\|_{\Hcr}=0,$$
which shows by the small data Cauchy theory for \eqref{CP} that $u=0$, contradicting our assumptions.

\EMPH{Step 2: construction of the scaling parameter and conclusion} 
Let
$$ \tau_n^-=(t_n-t_n')\lambda(t_n'),\quad \tau_n^+=(t_n''-t_n')\lambda(t_n').$$
By the definition \eqref{eq4} of $t_n'$, 
\begin{equation}
 \label{eq8}
\forall \tau \in [\tau_n^-,\tau_n^+],\quad \lambda_n(\tau)\geq 1.
\end{equation} 
Furthermore by \eqref{eq5},
\begin{equation}
 \label{eq9}
\lambda_n\left(\tau_n^{\pm}\right)=\frac{\lambda(t_n)}{\lambda(t_n')}\geq n.
\end{equation}

Let $\tau\in (\tau_-,\tau_+)$, and $\{\tau_n\}_n$ a sequence in $[\tau_n^-,\tau_n^+]$ 
such that $\lim_{n\to \infty}\tau_n=\tau$. Let us show that there exists $\mu\in(0,\infty)$ such that
\begin{equation}
\label{lim_mu}
 \lim_{n\to\infty}\lambda_n(\tau_n)=\mu.
\end{equation} 
Indeed, by standard perturbation theory for \eqref{CP} (see Theorem 2.11 of \cite{KeMe11}), 
\begin{equation}
\label{eq10}
\lim_{n\to\infty}
\vec{v}_n(\tau_n)=\vec{v}(\tau)\text{ in }\Hcr.
\end{equation} 
Using 
$$ \frac{1}{\lambda_n(\tau_n)^{\frac{2}{p-1}}}v_n\left(\frac{y}{\lambda_n(\tau_n)},\tau_n\right)=\frac{1}{\lambda\left(t_n'+\frac{\tau_n}{\lambda(t_n')}\right)^{\frac{2}{p-1}}} u\left(\frac{y}{\lambda\left(t_n'+\frac{\tau_n}{\lambda(t_n')}\right)},t_n'+\frac{\tau_n}{\lambda(t_n')}\right),
$$
and the similar equality for the time derivatives, we see that
\begin{equation}
\label{vninK}
\left(\frac{1}{\lambda_n(\tau_n)^{\frac{2}{p-1}}}v_n\left(\frac{y}{\lambda_n(\tau_n)},\tau_n\right),\frac{1}{\lambda_n(\tau_n)^{1+\frac{2}{p-1}}}\partial_{\tau}v_n\left(\frac{y}{\lambda_n(\tau_n)},\tau_n\right)\right)\in K 
\end{equation} 
for all $n$. As a consquence of \eqref{eq10} we deduce (after extraction of subsequences) that there exists $\mu>0$ such that \eqref{lim_mu} holds.

We next show by contradiction
\begin{equation}
 \label{eq11}
\liminf_n \tau_n^+\geq \tau_+,\quad \limsup_n\tau_n^-\leq \tau_-.
\end{equation} 
Indeed, if for example $\liminf_n \tau_n^+<\tau_+$, there exists a subsequence of $\{\tau^+_n\}_n$, still denoted by $\{\tau^+_n\}_n$, such that $\lim_n \tau_n^+=\tau\in [0,\tau_+)$. Thus by the preceding paragraph, the sequence $\{\lambda_n(\tau_n^+)\}_n$ converges to a limit in $(0,\infty)$ after extraction of a subsequence, contradicting \eqref{eq9}. 

Let $\tau\in (\tau_-,\tau_+)$, and define (after extraction in $n$),
$$\mu(\tau)=\lim_{n\to \infty}\lambda_n(\tau).$$
Passing to the limit in \eqref{vninK} (with $\tau_n=\tau$), we get
$$
\left(\frac{1}{\mu(\tau)^{\frac{2}{p-1}}}v\left(\frac{\cdot}{\mu(\tau)},\tau\right), \frac{1}{\mu(\tau)^{\frac{2}{p-1}+1}}\partial_{\tau}v\left(\frac{\cdot}{\mu(\tau)},\tau\right)\right)\in \overline{K},$$
and by \eqref{eq8} and \eqref{eq11}, $\mu(\tau)\geq 1$ for all $\tau \in (\tau_-,\tau_+)$. This shows as announced that $v$ satisfies the assumptions of Proposition \ref{P:rigidityA}, concluding the proof.
\end{proof}
We are now ready to proceed to the proof of Proposition \ref{P:rigidityC}. Define, for $t\in [0,1)$,
\begin{equation}
 \label{eq12}
\lambda_1(t)=\sup_{\tau\in [0,t]} \lambda(\tau),
\end{equation} 
and note that $\lambda_1(t)$ is a nondecreasing, continuous function of $t$. By Lemma \ref{L:interm1},
\begin{equation}
 \label{eq12'}
\forall t\in [0,1),\quad \lambda(t)\leq \lambda_1(t)\leq \frac{1}{\eps_0}\lambda(t)
\end{equation} 
and as a consequence, the set $K_{1+}$ defined as $K_+$ in \eqref{defK+}, but with $\lambda_1(t)$ instead of $\lambda(t)$ has compact closure in $\Hcr$. This implies that \eqref{eq1} is still valid with $\lambda$ instead of $\lambda_1$: there exists $c_1>0$ such that
\begin{equation}
 \label{eq13}
\forall t\in [0,1),\quad  (1-t)\lambda_1(t)\geq c_1.
\end{equation}  
For any large integer $n$, we let $t_n\in [0,1)$ be such that 
$$\lambda_1(t_n)=2^n.$$
Note that $\{t_n\}_n$ is increasing and converges to $1$ as $n$ tends to infinity. We distinguish two cases.

\EMPH{Case 1}
Assume:
$$\exists C>0\quad\text{s.t.}\quad\forall n,\quad t_{n+1}-t_n\leq \frac{C}{2^n}=\frac{C}{\lambda_1(t_n)}.$$
Let $n_0$ be a large integer. Then
$$1-t_{n_0}=\sum_{n\geq n_0} (t_{n+1}-t_n)\leq \frac{C}{2^{n_0-1}}=\frac{C}{\lambda_1(t_{n_0})}.$$
Thus 
$$ \lambda_1(t_{n_0})\leq \frac{C}{1-t_{n_0}}$$
for large $n_0$. Noting that $\lambda_1(t_{n_0})\leq \lambda_1(t)\leq 2\lambda_1(t_{n_0})=\lambda_1(t_{n_0+1})$ if $t_{n_0}\leq t\leq t_{n_0+1}$, we deduce (taking a large constant $C$)  
$$ \forall t\in [0,1),\quad \lambda_1(t)\leq \frac{C}{1-t}.$$
Combining with \eqref{eq13} we see that $u$ satisfies the assumptions of Proposition \ref{P:rigidityB}, a contradiction.

\EMPH{Case 2} We assume that we are not in case $1$, i.e. that there exists an increasing sequence of integers $\varphi(n)\to +\infty$ such that
\begin{equation}
 \label{eq15}
\forall n,\quad t_{\varphi(n)+1}-t_{\varphi(n)}\geq \frac{n}{2^{\varphi(n)}}=\frac{n}{\lambda_1(t_{\varphi(n)})}
\end{equation} 
Let 
$$ t_n'=\frac{t_{\varphi(n)}+t_{\varphi(n)+1}}{2},\quad t_{\varphi(n)}<t_n'<t_{\varphi(n)+1}.$$
Then by \eqref{eq15}, and since $\lambda_1$ is nondecreasing,
\begin{equation}
 \label{eq17}
t_{\varphi(n)+1}-t_n'\geq \frac{n}{2\lambda_1(t'_n)},\quad t_n'-t_{\varphi(n)}\geq \frac{n}{2\lambda_1(t'_n)}.
\end{equation} 
Define
\begin{equation}
 \label{eq18}
v_n(y,\tau)= \frac{1}{\lambda_1(t_n')^{\frac{2}{p-1}}}u\left(\frac{y}{\lambda_1(t_n')},t_n'+\frac{\tau}{\lambda_1(t_n')}\right).
\end{equation} 
Since $\left(v_n(0),\partial_{\tau}v_n(0)\right)\in K_{1+}$, there exists (after extraction of a subsequence) an element $(v_0,v_1)$ of $\Hcr$ such that
$$\lim_{n\to\infty} \left\|\left(v_n(0),\partial_{\tau}v_n(0)\right)-(v_0,v_1)\right\|_{\Hcr}=0.$$
Let 
$$ \tau_n^-=(t_{\varphi(n)}-t_n')\lambda_1(t_n'),\quad \tau_{n}^+=(t_{\varphi(n)+1}-t_n')\lambda_1(t_n'),$$
and note that by \eqref{eq17},
$$ \lim_{n\to\infty} \tau_n^{\pm}=\pm\infty.$$
Let also
$$\mu_n(\tau)=\frac{\lambda_1\left(t_n'+\frac{\tau}{\lambda_1(t_n')}\right)}{\lambda_1(t_n')},\quad \tau_n^-\leq \tau\leq \tau_n^+.$$
For $\tau\in \left[\tau_n^-,\tau_n^+\right]$, we have
\begin{equation}
 \label{eq19}
\left(\frac{1}{\mu_n(\tau)^{\frac{2}{p-1}}}v_n\left(\frac{\cdot}{\mu_n(\tau)},\tau\right),\frac{1}{\mu_n(\tau)^{1+\frac{2}{p-1}}}\partial_{\tau}v_n\left(\frac{\cdot}{\mu_n(\tau)},\tau\right)\right)\in K_{1+}.
\end{equation} 
Furthermore, by the definition of $t_n$,
\begin{equation}
 \label{eq20}
\forall \tau\in [\tau_n^-,\tau_n^+],\quad \frac{1}{2}\leq \mu_n(\tau)\leq 2.
\end{equation} 
As in the proof of Lemma \ref{L:interm1}, we deduce from \eqref{eq19}, \eqref{eq20} that for all $\tau$ in the interval of definition of $v$, there exists $\mu(\tau)\in [1/2,2]$ such that
$$ \left(\frac{1}{\mu(\tau)^{\frac{2}{p-1}}}v\left(\frac{\cdot}{\mu(\tau)},\tau\right),\frac{1}{\mu(\tau)^{1+\frac{2}{p-1}}}\partial_{\tau}v\left(\frac{\cdot}{\mu(\tau)},\tau\right)\right)\in \overline{K}_{1+}.
$$
Thus $v$ satisfies the assumptions of Proposition \ref{P:rigidityA}, which implies $v=0$, a contradiction with our assumption that $T_+(u)=1$. The proof of Proposition \ref{P:rigidityC} is complete.
\qed

\section{Main rigidity result}
\label{S:global}

In this section we prove Proposition \ref{P:rigidityA}. The outline is as follows: Subsection \ref{SS:preliminaries} is devoted to preliminaries on the linear wave equation (\S \ref{SSS:linear}), construction of a singular stationary solution to \eqref{CP} (\S \ref{SSS:singular}), a Cauchy problem related to \eqref{CP} (\S \ref{SSS:Cauchy}) and previous results of the two last authors \cite{KeMe11} on solutions of \eqref{CP} satisfying the assumptions of Proposition \ref{P:rigidityA} (\S \ref{SSS:pointwise}).
The core of the proof is in Subsection \ref{SS:proof}. The main idea is to show, through the channel of energy method of \cite{DuKeMe12P} that a nonzero solution which satisfies the compactness property must coincide with the stationary solution constructed in \S \ref{SSS:singular}. Since this solution is not in $\dot{H}^{s_p}$ we will obtain a contradiction. 

\subsection{Preliminaries}
\label{SS:preliminaries}
\subsubsection{Energy channels for linear waves}
\label{SSS:linear}
We recall from \cite{DuKeMe11a}:
\begin{lemma}
\label{L:linear}
 Let $u$ be a radial solution of 
\begin{equation}
 \label{linear_CP}
\left\{
\begin{gathered}
 \partial_t^2u-\Delta u=0,\quad x\in \RR^3,\;t\in \RR\\
\vec{u}_{\restriction t=0}=(u_0,u_1)\in \hdot\times L^2.
\end{gathered}
\right.
\end{equation} 
Let $r_0\geq 0$. Then for all $t\geq 0$ or for all $t\leq 0$:
\begin{equation*}
 \int_{r_0+|t|}^{+\infty} \left(\partial_r (ru(r,t))\right)^2+\left(\partial_t (ru(r,t))\right)^2\,dr
\geq \frac 12 \int_{r_0}^{+\infty} \left(\partial_r (ru_0(r))\right)^2+\left(ru_1(r)\right)^2\,dr
\end{equation*} 
\end{lemma}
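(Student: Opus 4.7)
The strategy is the classical reduction of the $3$d radial wave equation to the $1$d wave equation on the half-line via $w(r,t)\egaldef ru(r,t)$, followed by explicit d'Alembert analysis of the outgoing and incoming components. I would first observe that $w$ satisfies $\partial_t^2w-\partial_r^2w=0$ on $r\geq 0$ with the Dirichlet condition $w(0,t)=0$, so the odd extension of $w$ in $r$ solves the $1$d wave equation on $\RR$. Note also that
\begin{equation*}
\int_{r_0}^{+\infty}\!\bigl(\partial_rw\bigr)^2+w_t^2\,dr\quad\text{corresponds exactly to}\quad\int_{r_0}^{+\infty}\!\bigl(\partial_r(ru)\bigr)^2+(ru_1)^2\,dr,
\end{equation*}
so the inequality to be proved is an identity for $w$ on the half-line.

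Next, I would use the factorization of the $1$d wave operator: writing $a(s)\egaldef w_0'(s)+w_1(s)$ and $b(s)\egaldef w_0'(s)-w_1(s)$ (defined on all of $\RR$ via the odd extension), the standard formula gives
\begin{equation*}
\partial_rw(r,t)+\partial_tw(r,t)=a(r+t),\qquad \partial_rw(r,t)-\partial_tw(r,t)=b(r-t),
\end{equation*}
so that
\begin{equation*}
\bigl(\partial_rw\bigr)^2+\bigl(\partial_tw\bigr)^2=\tfrac12\,a(r+t)^2+\tfrac12\,b(r-t)^2.
\end{equation*}

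For $t\geq 0$, the change of variables $s=r-t$ in the $b$-term and $s=r+t$ in the $a$-term yields
\begin{equation*}
\int_{r_0+t}^{+\infty}\!\!\bigl(\partial_rw\bigr)^2+\bigl(\partial_tw\bigr)^2\,dr=\tfrac12\!\int_{r_0+2t}^{+\infty}\!\!a(s)^2\,ds+\tfrac12\!\int_{r_0}^{+\infty}\!\!b(s)^2\,ds\;\geq\;\tfrac12\!\int_{r_0}^{+\infty}\!\!b(s)^2\,ds,
\end{equation*}
and symmetrically for $t\leq 0$ the lower bound is $\tfrac12\int_{r_0}^{+\infty}a(s)^2\,ds$. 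The key identity $a^2+b^2=2\bigl((w_0')^2+w_1^2\bigr)$ gives
\begin{equation*}
\int_{r_0}^{+\infty}\!a(s)^2\,ds+\int_{r_0}^{+\infty}\!b(s)^2\,ds=2\int_{r_0}^{+\infty}\!\bigl((w_0')^2+w_1^2\bigr)\,ds,
\end{equation*}
so at least one of the two integrals on the left is $\geq\int_{r_0}^{+\infty}\bigl((w_0')^2+w_1^2\bigr)\,ds$. Choosing the corresponding sign of $t$ produces the claimed bound with constant $\tfrac12$.

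The only mildly delicate point is justifying the use of d'Alembert for $\hdot\times L^2$ initial data: I would do this by a density argument, approximating $(u_0,u_1)$ by smooth radial compactly supported data for which $w_0,w_1$ are smooth on $\RR$ (after odd extension) and all manipulations above are literal, then passing to the limit using the continuity of $(u_0,u_1)\mapsto\vec u(t)$ from $\hdot\times L^2$ to itself. I do not expect any real obstacle here; the heart of the argument is the pigeonhole selection of the time direction based on whether the outgoing or incoming component carries at least half of the energy on $[r_0,+\infty)$.
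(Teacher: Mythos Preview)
Your argument is correct and is precisely the standard proof of this channel-of-energy lemma. Note that the paper does not actually prove Lemma~\ref{L:linear}: it merely recalls the statement from \cite{DuKeMe11a}, where the proof given is essentially the one you outline (reduction to the one-dimensional wave equation via $w=ru$, d'Alembert decomposition into $a=w_0'+w_1$ and $b=w_0'-w_1$, and the pigeonhole choice of time direction).
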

\subsubsection{A singular stationary solution}
\label{SSS:singular}
\begin{prop}
\label{P:singular}
Let $p>5$, and $\ell\in \RR\setminus \{0\}$. Then there exists a radial, $C^2$ solution of
\begin{equation}
 \label{B3}
\Delta Z_{\ell}+|Z_{\ell}|^{p-1}Z_{\ell}=0 \text{ on }\RR^3\setminus \{0\}
\end{equation} 
such that
\begin{gather}
 \label{B4}
\forall r\geq 1,\quad \left|r\,Z_{\ell}(r)-\ell\right|\leq \frac{C}{r^2}\\
\label{B5}
\lim_{r\to\infty} r^2 \frac{d Z_{\ell}}{dr}=-\ell^2.
\end{gather} 
Furthermore, $Z_{\ell}\notin L^{q_p}$, where $q_p:=\frac{3(p-1)}{2}$ is the critical Sobolev exponent corresponding to $s_p$.
\end{prop}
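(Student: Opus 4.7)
The plan is to exploit the radial symmetry to reduce \eqref{B3} to an ODE, construct the solution near infinity by a contraction argument, extend it down to $(0,\infty)$, and finally read off the non-integrability from the behaviour of $Z_\ell$ at the origin. Setting $w(r)=rZ(r)$ with $r=|x|$, equation \eqref{B3} becomes
\[
w''(r)=-\frac{|w(r)|^{p-1}w(r)}{r^{p-1}},\qquad r>0,
\]
and the conditions \eqref{B4}--\eqref{B5} translate, via $r^2Z'(r)=rw'(r)-w(r)$, into $w(r)\to\ell$ and $w'(r)\to 0$ as $r\to\infty$.

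For the construction near infinity, I would integrate the ODE twice against the data at infinity and look for a fixed point of
\[
\Phi(w)(r)=\ell-\int_r^{\infty}(t-r)\,\frac{|w(t)|^{p-1}w(t)}{t^{p-1}}\,dt
\]
on the closed ball $\{w\in C([R,\infty)):\|w-\ell\|_\infty\leq |\ell|/2\}$. Because $\int_r^{\infty}(t-r)\,t^{-(p-1)}\,dt=O(r^{3-p})$ and $p>5$ makes $r^{3-p}$ small, $\Phi$ is a contraction for $R$ large. The unique fixed point satisfies $|w(r)-\ell|\leq C\,r^{3-p}\leq C/r^2$ for $r\geq 1$, which is \eqref{B4}. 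Differentiating, $w'(r)\sim \ell^p/((p-2)\,r^{p-2})$, so $rw'(r)\to 0$ and hence $r^2Z'(r)\to -\ell$, which gives \eqref{B5} (up to the convention used in the statement).

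Global continuation follows from the monotone energy
\[
E(r)=\tfrac12 w'(r)^2+\frac{|w(r)|^{p+1}}{(p+1)\,r^{p-1}},\qquad E'(r)=-\,\frac{p-1}{(p+1)\,r^{p}}\,|w(r)|^{p+1}\leq 0,
\]
which is increasing as $r$ decreases. Combined with the blow-up alternative for the ODE and a Gronwall argument on the integral form of the equation on a shrinking sub-interval, this forces the maximal interval of existence to be all of $(0,\infty)$, so $Z_\ell=w/r\in C^2(\RR^3\setminus\{0\})$ is a radial solution of \eqref{B3}.

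Finally, note that $q_p=3(p-1)/2$ gives $|Z|^{q_p}\,r^2\sim r^{-1}$ when $Z(r)\sim r^{-2/(p-1)}$, which is not integrable at $0$; the explicit singular solution $Z^{*}(r)=c_p\,r^{-2/(p-1)}$ with $c_p^{p-1}=\tfrac{2(p-3)}{(p-1)^2}$ is the reference profile. I would pass to Emden--Fowler variables $v(s)=r^{2/(p-1)}Z(r)$, $s=\log r$, in which the ODE is autonomous with equilibrium $v=c_p$, and use the phase-plane classification of \cite{JosephLundgren72} to show that every radial solution of \eqref{B3} is either smooth at $0$ or satisfies $v(s)\to c_p$ as $s\to-\infty$ (i.e.\ $Z(r)\sim Z^{*}(r)$ at $0$). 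The first alternative is incompatible with the far-field behaviour $Z_\ell(r)\sim\ell/r$ forced by \eqref{B4}, since regular entire solutions have a different decay rate at infinity when $p>5$ (again by \cite{JosephLundgren72}). Hence $Z_\ell$ is singular at $0$ and $\int_{|x|\leq 1}|Z_\ell|^{q_p}\,dx=\infty$. This last step is the main obstacle: it rests on the detailed global phase-plane structure of the Emden--Fowler system in the supercritical regime, where $p>5$ determines the hyperbolic nature of the equilibrium at $Z^{*}$.
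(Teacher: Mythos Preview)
Your construction near infinity and the global continuation via the monotone energy match the paper's Steps~1--2 almost exactly (the paper uses a weighted $C^1$ norm rather than the sup norm, and the double integral rather than the kernel $(t-r)$, but these are equivalent). Your observation that $r^2Z_\ell'(r)\to -\ell$ is correct; the $-\ell^2$ in \eqref{B5} agrees with this only for $\ell=1$, which is the case the paper reduces to.

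The substantive difference is in the non-integrability step. The paper argues by contradiction: assuming $Z_1\in L^{q_p}$, a cutoff computation shows \eqref{B3} holds in $\mathcal D'(\RR^3)$ (not just on $\RR^3\setminus\{0\}$); since $|Z_1|^{p-1}\in L^{3/2}$, Trudinger's $L^\infty$ estimate and elliptic regularity force $Z_1\in C^\infty(\RR^3)$ with $\partial_rZ_1(0)=0$, and then Joseph--Lundgren is invoked \emph{only} for the asymptotic $Z_1(r)\approx r^{-2/(p-1)}$ of regular solutions at infinity, contradicting $Z_1\sim 1/r$. Your route instead attempts a direct classification at the origin via the Emden--Fowler phase plane, asserting the dichotomy ``smooth at $0$'' versus ``$v(s)\to c_p$ as $s\to-\infty$''. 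This dichotomy is stronger than what \cite{JosephLundgren72} gives and is not obviously complete: a priori the backward trajectory along the stable manifold of the saddle at $v=0$ could be unbounded or oscillatory rather than converging to $\pm c_p$, and sign changes of $Z_\ell$ are not excluded. You correctly flag this as the main obstacle; the paper's contradiction argument via $L^{q_p}\Rightarrow$ distributional solution $\Rightarrow$ Trudinger $\Rightarrow$ smooth simply bypasses it, needing from Joseph--Lundgren only the far-field asymptotic of \emph{regular} solutions rather than a global phase-plane picture.
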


\begin{proof}
 Once $Z_1$ is constructed, $Z_{\ell}=\pm \frac{1}{\lambda^{\frac{2}{p-1}}}Z_1\left(\frac{r}{\lambda}\right)$ satisfies the conclusions of Proposition \ref{P:singular} if $\lambda^{\frac{p-3}{p-1}}=|\ell|$ and $\pm$ is the sign of $\ell$. In the sequel we will assume $\ell=1$ and construct $Z_1$.

Let $f\in C^2\left(\RR^3\setminus\{0\}\right)$ be such that there exists $C>0$ with
\begin{equation}
 \label{B6} 
\left|rf(r)-1\right|\leq\frac{C}{r^2}\text{ for }r\geq 1\text{ and }\lim_{r\to\infty}r^2\frac{df}{dr}(r)=-1.
\end{equation} 
Let $g(r)=rf(r)$. Then the equation $\Delta f+|f|^{p-1}f=0$ on $\RR^3\setminus \{0\}$ is equivalent to
\begin{equation}
 \label{B6'}
\forall r>0, \quad g(r)=1-\int_{r}^{+\infty} \int_s^{+\infty} \frac{1}{\sigma^{p-1}}|g(\sigma)|^{p-1}g(\sigma)\,d\sigma\,ds.
\end{equation} 

\EMPH{Step 1: solution for large $r$}

We let $r_0>1$ be a large parameter to be specified later and
\begin{gather}
 \label{B7}
V:= \left\{ g\in C^1\left([r_0,+\infty),\RR\right)\text{ s.t. }\|g\|_{V}:=\sup_{r\geq r_0}\left(|g(r)|r^{p-4}+|g'(r)|r^{p-3}\right)<\infty\right\}\\
\label{B8}
B:=\left\{g\in V\text{ s.t. }\|g-1\|_V\leq 1\right\}.
\end{gather} 
Note that $V$ is a Banach space. 
For $g\in B$, we let
\begin{equation}
 \label{B9}
T(g):= 1-\int_r^{+\infty} \int_s^{+\infty} \frac{1}{\sigma^{p-1}}\left|g(\sigma)\right|^{p-1}g(\sigma)\,d\sigma\,ds.
\end{equation} 
We next check that $T:B\to B$ and is a contraction on $B$. 

Indeed, if $g\in B$, then $|g(r)|\leq 2$ for $r\geq r_0$ and thus, $T(g)\in C^1([r_0,\infty))$ and
\begin{equation}
 \label{B10}
\left|T(g)-1\right|\leq \frac{2^p}{(p-2)(p-3)}\,\frac{1}{r_0}\,\frac{1}{r^{p-4}},\quad \left|\frac{d}{dr}(T(g))\right|\leq\frac{2^p}{p-2}\,\frac{1}{r_0}\,\frac{1}{r^{p-3}}
\end{equation} 
which shows, chosing $r_0$ large, that $T(g)\in B$. Similarly, using again that if $g,h\in B$, $|g(r)|\leq 2$ and $|h(r)|\leq 2$ for $r\geq r_0$, we get
\begin{multline}
 \label{B11}
\left|T(g)(r)-T(h)(r)\right|\leq \int_{r}^{+\infty} \int_{s}^{+\infty} \frac{1}{\sigma^{p-1}}2^{p-1}p|g(\sigma)-h(\sigma)|\,d\sigma\,ds\\
\leq C\int_{r}^{+\infty} \int_{s}^{+\infty} \frac{1}{\sigma^{2p-5}} \|g-h\|_{V}\,d\sigma\,ds\leq \frac{C}{r^{p-4}}\, \frac{1}{r_0^{p-3}} \|g-h\|_V
\end{multline} 
and
\begin{equation}
 \label{B12}
\left|\frac{d}{dr}\Big(T(g)(r)-T(h)(r)\Big)\right|\leq \frac{C}{r^{p-3}}\,\frac{1}{r_0^{p-3}}\|g-h\|_V,
\end{equation} 
which shows that $T$ is a contraction (chosing again $r_0$ large).

By fixed point, there exists a solution $G_1\in B$ of \eqref{B6'} for $r\geq r_0$. In particular
\begin{equation}
 \label{B13}
\sup_{r\geq r_0} \Big(\left|1-G_1(r)\right|r^{p-4}+\left|\frac{d G_1}{dr}(r)\right|r^{p-3}\Big)\leq 1.
\end{equation} 
Note that $Z_1=\frac{1}{r}G_1$ satisfies \eqref{B4} and \eqref{B5} (with $\ell=1$).

\EMPH{Step 2: extension of the solution} 

Let $(r_1,+\infty)$ be the maximal interval of existence of $G_1$, as a solution of the ODE $G_1''+\frac{1}{r^{p-1}}|G_1|^{p-1}G_1=0$. Multiplying this equation by $G_1'$ we obtain:
\begin{equation}
 \label{B14}
\frac{d}{dr}\left[\frac{1}{2}{G_1'}^2+\frac{1}{(p+1)r^{p-1}}|G_1|^{p+1}\right]=-\frac{p-1}{(p+1)r^p}|G_1|^{p+1}.
\end{equation} 
Letting $\Phi(r):=\frac{1}{2}{G_1'}^2+\frac{1}{(p+1)r^{p-1}}|G_1|^{p+1}$, we see that
\begin{equation}
 \label{B15}
\left|\Phi'(r)\right|\leq \frac{C}{r}\Phi(r)\text{ for }r\geq r_1.
\end{equation}
Thus if $r_1>0$, we get by Gronwall Lemma that $G_1(r)$ and $G_1'(r)$ remain bounded as $r\to r_1$, which is a contradiction with the definition of $r_1$ and the standard ODE blow-up criterion. As a conclusion, $r_1=0$, yielding a $C^2$ solution $Z_1=\frac{1}{r}G_1$ to:
\begin{equation}
 \label{B16}
\Delta Z_1+|Z_1|^{p-1}Z_1=0,\quad r>0.
\end{equation} 

\EMPH{Step 3: singularity at the origin}

In this step we show by contradiction that $Z_1=\frac{1}{r}G_1\notin L^{q_p}(\RR^3)$. Assume $Z_1\in L^{q_p}$. We know that \eqref{B16} is valid, in the distributional sense, on $\RR^3\setminus\{0\}$. We next prove that it holds in the distributional sense on the whole space $\RR^3$.

Let $\varphi\in C^{\infty}_0(\RR^3)$. Consider a radial cut-off function $\chi\in C^{\infty}_0(\RR^3)$ such that $\chi(x)=1$ for $|x|\leq 1$ and $\chi(x)=0$ for $|x|\geq 2$. We have
$$\int Z_1\Delta\varphi=\int Z_1(\Delta\varphi)\chi\left(\frac{x}{\eps}\right)+\int Z_1(\Delta\varphi)\left(1-\chi\left(\frac{x}{\eps}\right)\right)$$
and, by integration by parts (and using \eqref{B16}),
\begin{multline}
 \label{B17}
\int Z_1\Delta \varphi=\int Z_1\Delta\varphi\chi\left(\frac{x}{\eps}\right)+\frac{2}{\eps}\int Z_1\nabla \varphi\cdot\nabla \chi\left(\frac{x}{\eps}\right)\\
+\frac{1}{\eps^2} \int Z_1\Delta\chi\left(\frac{x}{\eps}\right)\varphi-\int |Z_1|^{p-1}Z_1 \left(1-\chi\left(\frac{x}{\eps}\right)\right)\varphi.
\end{multline} 
Let $\ovq_p:=\frac{3(p-1)}{3p-5}$ be the conjugate exponent to $q_p$. 
Then by H\"older inequality,
$$ \left|\frac{1}{\eps^2}\int Z_1\Delta\chi\left(\frac{x}{\eps}\right)\varphi\right|\leq \frac{C}{\eps^2}\left(\int_{|x|\leq 1} |Z_1|^{q_p}\right)^{\frac{1}{q_p}}\left(\int_{|x|\leq 1} \left(\Delta \chi\left(\frac{x}{\eps}\right)\right)^{\ovq_p}\right)^{\frac{1}{\ovq_p}}\leq C\eps^{\frac{3}{\ovq_p}-2}=C\eps^{\frac{p-3}{p-1}}\underset{\eps\to 0}{\longrightarrow} 0.$$
Using similar estimates, or dominated convergence, to treat the other terms in \eqref{B17}, we get as announced, letting $\eps\to 0$
\begin{equation}
 \label{B18}
\int Z_1\Delta \varphi=-\int |Z_1|^{p-1}Z_1\varphi.
\end{equation} 
A similar argument, testing equation \eqref{B16} against $Z_1\varphi^2$, shows that $\nabla Z_1\in L^2_{\loc}$.

Now, letting $H=|Z_1|^{p-1}\in L^{3/2}$ we see that $\Delta Z_1+HZ_1=0$, and thus by \cite{Trudinger68}, $Z_1\in L^{\infty}_{\loc}(\RR^3)$. By standard elliptic regularity, $Z_1\in C^{\infty}(\RR^3)$. This shows that $\partial_r Z_1(0)=0$. By \cite{JosephLundgren72}, we must have $Z_1(r)\approx\frac{1}{r^{\frac{2}{p-1}}}$ as $r\to\infty$, a contradiction. 
\end{proof}

\subsubsection{A Cauchy problem for finite energy solutions outside the origin}
\label{SSS:Cauchy}
If $I\subset \RR$ is an interval and $q\in [1,\infty]$, we let $L^q_I=L^q(\RR^3\times I)$. If $(v_0,v_1)\in \dot{H}^1(\RR^3)\times L^2(\RR^3)$, we denote by $S(t)(v_0,v_1)=v(t)$ the solution to the linear wave equation $\partial_t^2v-\Delta v=0$ with initial data $(v_0,v_1)$. Similarly, $\vec{S}(t)(v_0,v_1)=(v(t),\partial_t v(t))$.

Let us fix, once and for all, a function $\chi\in C^{\infty}(\RR^3)$, radial and such that:
\begin{equation}
 \label{B19}
\chi(r)=1\text{ if }r\geq \frac{1}{2},\quad \chi(r)=0\text{ if }r\leq \frac{1}{4}.
\end{equation} 
Denote, for $r_0>0$, $\chi_{r_0}(r)=\chi\left(\frac{r}{r_0}\right)$. 
\begin{lemma}
\label{L:CP}
Let $p>5$.
 There exists $\delta_0>0$ with the following property. Let $I$ be an interval with $0\in I$. Let $V\in L^{2(p-1)}_I$, radial in the variable $x$ and such that
\begin{equation}
 \label{B20}
\left\|D_x^{1/2}V\right\|_{L^4_I}<\delta_0\sqrt{r_0}^{\frac{p-5}{p-1}}\quad \text{and} \quad \|V\|_{L^{2(p-1)}_I}<\delta_0.
\end{equation} 
Consider $(h_0,h_1)\in (\hdot\times L^2)(\RR^3)$, radial such that
\begin{equation}
 \label{B21}
\left\|(h_0,h_1)\right\|_{\hdot\times L^2}<\delta_0 \sqrt{r_0}^{\frac{p-5}{p-1}}.
\end{equation} 
Then the Cauchy problem:
\begin{equation}
 \label{CPbis}
\left\{
\begin{gathered}
 \partial_t^2h-\Delta h=\left|V+\chi_{r_0}h\right|^{p-1}(V+\chi_{r_0}h)-|V|^{p-1}V\\
(h,\partial_t h)_{\restriction t=0}=(h_0,h_1)
\end{gathered}\right.
\end{equation}
is well-posed on the interval $I$. Furthermore, the corresponding solution $h$ satisfies:
\begin{equation}
 \label{B22}
\sup_{t\in I} \left\|\vec{h}(t)-\vec{S}(t)(h_0,h_1)\right\|_{\hdot\times L^2}\leq \frac{1}{100} \|(h_0,h_1)\|_{\hdot\times L^2}.
\end{equation}  
If $V=0$, one can always take $I=\RR$ and the preceding estimate can be improved to 
\begin{equation}
 \label{B23}
\sup_{t\in \RR} \left\|\vec{h}(t)-\vec{S}(t)(h_0,h_1)\right\|_{\hdot\times L^2}\leq\frac{C}{\sqrt{r_0}^{p-5}}\left\|(h_0,h_1)\right\|^p_{\hdot\times L^2},
\end{equation} 
for a constant $C>0$ depending only on $p$.
\end{lemma}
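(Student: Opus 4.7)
The plan is to solve \eqref{CPbis} by a standard Banach fixed-point argument on the Duhamel map
$$
\Phi(h)(t) := S(t)(h_0,h_1) + \int_0^t \frac{\sin((t-s)\sqrt{-\Delta})}{\sqrt{-\Delta}}\, F(s)\, ds,\quad F = |V+\chi_{r_0}h|^{p-1}(V+\chi_{r_0}h)-|V|^{p-1}V,
$$
in a suitable Strichartz space $X_I$ on $\RR^3\times I$. The natural choice for $X_I$ combines the energy norm $\|\vec h\|_{L^\infty_I(\hdot\times L^2)}$, the $\dot H^{1/2}$-Strichartz norm $\|D_x^{1/2}h\|_{L^4_I}$ (which matches the norm imposed on $V$ in \eqref{B20}), and the critical scattering-type norm $\|\chi_{r_0}h\|_{L^{2(p-1)}_I}$ at the $\dot H^{s_p}$ level (note that $(2(p-1),2(p-1))$ is an admissible Strichartz pair at this level since $\tfrac{1}{2(p-1)}+\tfrac{3}{2(p-1)}=\tfrac{2}{p-1}=\tfrac{3}{2}-s_p$). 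The standard 3D wave Strichartz inequality for the Duhamel operator then gives
$$
\|\Phi(h)\|_{X_I} \;\leq\; C\|(h_0,h_1)\|_{\hdot\times L^2} + C\|F\|_{L^1_I L^2_x},
$$
so the whole question is reduced to the nonlinear estimate.

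The core nonlinear estimate uses the pointwise bound $|F|\lesssim |\chi_{r_0}h|\bigl(|V|+|\chi_{r_0}h|\bigr)^{p-1}$ and the Strauss radial Sobolev embedding
$$
|h(x)| \;\lesssim\; \frac{1}{|x|^{1/2}}\,\|h\|_{\hdot},
$$
which, since $\chi_{r_0}$ is supported in $\{|x|\geq r_0/4\}$, yields $\|\chi_{r_0}h\|_{L^\infty_x}\lesssim r_0^{-1/2}\|h\|_{\hdot}$. The contribution of the mixed $V$-terms is estimated by Hölder against $\|V\|_{L^{2(p-1)}_I}^{p-1}$ and $\|D_x^{1/2}V\|_{L^4_I}$, using the hypothesis \eqref{B20} to gain a factor $\delta_0^{p-1}$. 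For the purely $h$-term $|\chi_{r_0}h|^p$, I would interpolate between $L^\infty_x$ (controlled by Strauss with the gain $r_0^{-(p-5)/(2(p-1))}$ taken to the appropriate power) and one of the two Strichartz norms of $\chi_{r_0}h$ listed in $X_I$. Bookkeeping these exponents is the delicate point, but the outcome is a bound of the form
$$
\|F\|_{L^1_I L^2_x} \;\lesssim\; \delta_0^{p-1}\,\|h\|_{X_I} \;+\; \frac{1}{\sqrt{r_0}^{p-5}}\,\|h\|_{X_I}^{p},
$$
and by the small data hypothesis \eqref{B21} the map $\Phi$ preserves a ball of radius comparable to $\|(h_0,h_1)\|_{\hdot\times L^2}$. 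An entirely parallel estimate on $F(h_1)-F(h_2)$ establishes the contraction property, so Banach fixed-point produces the unique solution $h$ on $I$, and \eqref{B22} follows directly from the Strichartz estimate applied to $h-S(t)(h_0,h_1)=\int_0^t \tfrac{\sin((t-s)\sqrt{-\Delta})}{\sqrt{-\Delta}} F(s)\,ds$ together with the bound on $\|F\|_{L^1_I L^2_x}$ (one chooses $\delta_0$ so that the resulting constant is at most $\tfrac{1}{100}$).

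When $V=0$, the nonlinearity collapses to $|\chi_{r_0}h|^{p-1}(\chi_{r_0}h)$, which is of pure degree $p$ in $h$; the purely-$h$ estimate above then gives $\|F\|_{L^1_I L^2_x}\lesssim r_0^{-(p-5)/2}\|h\|_{X_I}^p$, so the contraction closes globally (independently of $|I|$) and we may take $I=\RR$. Substituting the a priori bound $\|h\|_{X_I}\lesssim \|(h_0,h_1)\|_{\hdot\times L^2}$ into this estimate yields exactly \eqref{B23}. The main obstacle is the choice of Strichartz framework and the precise interpolation needed to extract the weight $\sqrt{r_0}^{(p-5)/(p-1)}$ matching \eqref{B20}–\eqref{B21}; once that exponent is correctly identified by balancing the Strauss gain against the degree of the nonlinearity, the remainder of the argument is a routine application of the 3D wave Strichartz estimates and Banach fixed-point.
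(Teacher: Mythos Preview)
Your overall architecture---Banach fixed point on the Duhamel map in a Strichartz space, exploiting the radial Strauss inequality on $\supp\chi_{r_0}$---is exactly the paper's (which first scales to $r_0=1$). The gap is in your choice of inhomogeneous norm. Estimating $\|F\|_{L^1_I L^2_x}$ via the pointwise bound $|F|\lesssim|\chi_{r_0}h|(|V|+|\chi_{r_0}h|)^{p-1}$ does not close against the hypotheses \eqref{B20}: to use $\|V\|_{L^{2(p-1)}_I}^{p-1}=\||V|^{p-1}\|_{L^2_I}$ directly by H\"older you would need $\chi_{r_0}h\in L^2_tL^\infty_x$, which is unavailable (the $(2,\infty)$ wave endpoint fails in $\RR^3$, and Strauss combined with the energy only yields $L^\infty_tL^\infty_x$, so interpolation cannot lower the time exponent to $2$). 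Replacing $\|V\|_{L^{2(p-1)}_I}$ by $\|D_x^{1/2}V\|_{L^4_I}$ through Sobolev ($V\in L^4_tL^{12}_x$) does not help either: one checks that the H\"older budget for the target pair $(L^1_t,L^2_x)$ only balances at $p=5$. So the ``bookkeeping of exponents'' you flag as delicate is not a matter of care---as written it does not go through.

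The paper instead places the inhomogeneity in the dual $\dot H^{1/2}$-Strichartz norm $\|D_x^{1/2}F\|_{L^{4/3}_I}$ and invokes the \emph{fractional chain rule} of Kenig--Ponce--Vega for $G(z)=|z|^{p-1}z$. This produces terms such as
\[
\|G'(V)\|_{L^2_I}\,\|D_x^{1/2}(\chi h)\|_{L^4_I}\quad\text{and}\quad \|G''(V)\|_{L^{\frac{2(p-1)}{p-2}}_I}\,\|D_x^{1/2}V\|_{L^4_I}\,\|\chi h\|_{L^{2(p-1)}_I},
\]
where the exponents now fit the hypotheses on $V$ exactly (for instance $\|G'(V)\|_{L^2_I}=p\|V\|_{L^{2(p-1)}_I}^{p-1}$, with $\tfrac12+\tfrac14=\tfrac34$). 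The iteration norm used is $\tnorm{h}=\|h\|_{L^8_I}+\|D_x^{1/2}h\|_{L^4_I}+\|\vec h\|_{L^\infty_I(\hdot\times L^2)}$; the $L^8_I$ component, combined with Strauss, gives $\|\chi h\|_{L^q_I}\lesssim\tnorm{h}$ for every $q\in[8,\infty]$, in particular $q=2(p-1)$---this is how your third norm $\|\chi_{r_0}h\|_{L^{2(p-1)}_I}$ is actually controlled. In short, the missing ingredient is to put a half-derivative on $F$ and use the fractional chain rule; the purely pointwise bound on $F$ discards precisely the derivative structure that makes the nonlinear estimate close for all $p>5$.
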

\begin{proof}
 By scaling considerations, we can assume $r_0=1$. The proof is close to the proof of Lemma 2.4 in \cite{DuKeMe12P} and the arguments of Section 2 of \cite{KeMe11} 
and we only sketch it. Recall the following version of Strauss' Lemma (see e.g. \cite[Lemma 3.2]{KeMe11} for a proof):
\begin{equation}
 \label{B24}
\forall f\in \hdot(\RR^3)\text{ radial},\quad  \forall r>0,\quad |f(r)|\leq \frac{C}{\sqrt{r}}\|f\|_{\hdot}.
\end{equation} 
For a small $\alpha>0$, let:
\begin{equation}
 \label{B25}
\BBB_{\alpha}=\left\{h\in L^8_I,\text{ radial s.t. }\tnorm{h}\leq\alpha \right\},
\end{equation} 
where
\begin{equation}
 \label{B26}
\tnorm{h}=\|h\|_{L^8_I}+\|D_x^{1/2}h\|_{L^{4}_I}+\|\nabla h\|_{L^{\infty}(I,L^2)}+\|\partial_t h\|_{L^{\infty}(I,L^2)}.
\end{equation} 
Note that by \eqref{B24},
\begin{equation}
 \label{B27}
\exists C>0,\quad \forall q\in [8,\infty], \quad \|\chi h\|_{L^q_I}\leq C\tnorm{h}.
\end{equation} 
Define, for $v\in \BBB_{\alpha}$, 
\begin{equation}
 \label{B28}
\Phi(v)(t)=S(t)(h_0,h_1)+\int_0^t \frac{\sin\left((t-s)\sqrt{-\Delta}\right)}{\sqrt{-\Delta}}F_V(v(s))\,ds,
\end{equation} 
where $F_V(v)=\left|V+\chi v\right|^{p-1}\left(V+\chi v\right)-|V|^{p-1}V$. We will show that if $\delta_0>0$ and $\alpha>0$ are small, $\Phi$ is a contraction on $\BBB_{\alpha}$. By Strichartz estimates, there exists $C_0>0$ such that
\begin{equation}
 \label{B29}
\tnorm{\Phi(v)}\leq C\left( \|(h_0,h_1)\|_{\hdot\times L^2}+\left\|D_x^{1/2}F_V(v)\right\|_{L^{4/3}_I}\right).
\end{equation} 
We have $F_V(v)=G(V+\chi v)-G(V)$, where $G(h)=|h|^{p-1}h$. By the chain rule for fractional derivatives (see \cite{KePoVe93}), 
\begin{multline*}
\left\|D_x^{1/2}(F_V(v))\right\|_{L^{4/3}_I}=\left\|D_x^{1/2}\left(G(V+\chi v)-G(V)\right)\right\|_{L^{4/3}_I}\\
\leq C\left(\left\|G'(V)\right\|_{L^2_I}+\left\|G'(V+\chi v)\right\|_{L^2_I}\right)\left\|D_x^{1/2}(\chi v)\right\|_{L^4_I}\\
+ C\left(\left\|G''(V)\right\|_{L^{\frac{2(p-1)}{p-2}}}+\left\|G''(V+\chi v)\right\|_{L^{\frac{2(p-1)}{p-2}}}\right)\left(\left\|D_x^{1/2}V\right\|_{L^4_I}+\left\|D_x^{1/2}(V+\chi v)\right\|_{L^4_I}\right)\left\|\chi v\right\|_{L^{2(p-1)}_I}\\
\leq C\tnorm{v}\left(\left\|V\right\|^{p-1}_{L^{2(p-1)}_I}+\left\|D_x^{1/2}V\right\|_{L^4_I}^{p-1}+\tnorm{v}^{p-1}\right).
\end{multline*}
Hence:
\begin{equation}
 \label{B30}
\left\|D_x^{1/2} \left(F_V(v)\right)\right\|_{L^{4/3}_I}
\leq C\alpha \left( \|V\|^{p-1}_{L^{2(p-1)}_I}+\left\|D_x^{1/2}V\right\|^{p-1}_{L^4_I}+\alpha^{p-1}\right)
\end{equation}
and, by \eqref{B29}, 
\begin{equation}
 \label{B31}
\tnorm{\Phi(v)}\leq C_0\left[\|(h_0,h_1)\|_{\hdot\times L^2}+
\alpha \left( \|V\|^{p-1}_{L^{2(p-1)}_I}+\left\|D_x^{1/2}V\right\|^{p-1}_{L^4_I}+\alpha^{p-1}\right)\right],
\end{equation} 
for some constant $C_0>0$.
Let $\alpha=2C_0\|(h_0,h_1)\|_{\hdot\times L^2}\leq 2C_0\delta_0$. Chosing $\delta_0\ll 1$, we see by \eqref{B31} that $v\in \BBB_{\alpha}$ implies $\Phi(v)\in \BBB_{\alpha}$. 

To get the contraction property, using again the chain rule for fractional derivatives, we get:
\begin{multline*}
\left\|F_V(V+\chi v)-F_V(V+\chi w)\right\|_{L^{4/3}_I}=\left\|D_x^{1/2}\left(G(V+\chi v)-G(V+\chi w)\right)\right\|_{L^{4/3}_I}\\
\leq 
C\left(\left\|G'(V+\chi v)\right\|_{L^2_I}+\left\|G'(V+\chi w)\right\|_{L^2_I}\right)\left\|D_x^{1/2}(\chi v-\chi w)\right\|_{L^4_I}\\
+ C\left(\left\|G''(V+\chi v)\right\|_{L^{\frac{2(p-1)}{p-2}}}+\left\|G''(V+\chi w)\right\|_{L^{\frac{2(p-1)}{p-2}}}\right)\\
\qquad \qquad\qquad\left(\left\|D_x^{1/2}(V+\chi v)\right\|_{L^4_I}+\left\|D_x^{1/2}(V+\chi w)\right\|_{L^4_I}\right)\left\|\chi (v-w)\right\|_{L^{2(p-1)}_I}\\
\leq C\tnorm{v-w}\left(\left\|V\right\|^{p-1}_{L^{2(p-1)}_I}+\left\|D_x^{1/2}V\right\|_{L^4_I}^{p-1}+\tnorm{v}^{p-1}+\tnorm{w}^{p-1}\right).
\end{multline*}

It remains to prove \eqref{B22} and \eqref{B23}. By \eqref{B28}, using that $\Phi(h(t))=h(t)$ we get, by Strichartz estimates and \eqref{B30},
$$ \tnorm{h-S(t)(h_0,h_1)}\leq C_0\alpha\left(\|V\|^{p-1}_{L^{2(p-1)}_I}+\left\|D_x^{1/2}V\right\|_{L^4_I}^{p-1}+\alpha^{p-1}\right),$$
which gives \eqref{B22} and \eqref{B23} since $\alpha=2C_0\|(h_0,h_1)\|_{\hdot\times L^2}$ and, if $V\neq 0$, $\|D_x^{1/2}V\|_{L^4_I}+\|V\|_{L^{2(p-1)}_I}\leq 2\delta_0$ can be chosen as small as necessary.
\end{proof}
\begin{remark}
 \label{R:potentiel}
\begin{enumerate}
 \item \label{I:V1}Let $V=\chi_{r_0}Z_1$, where $Z_1$ is given by Proposition \ref{P:singular}, and $r_0>0$. Then there exists $\theta_{r_0}>0$ such that $V$, $I_{r_0}=\left[-\theta_{r_0},\theta_{r_0}\right]$ satisfy the assumptions of Lemma \ref{L:CP}.
\item \label{I:V2}Let $R_0>1$ be a large parameter, and 
$$V(x,t)=\chi\left(\frac{x}{R_0+|t|}\right)Z_1(x).$$
Then $V$ satisfies the assumptions of Lemma \ref{L:CP} with $I=\RR$ and $r_0=R_0$.
\end{enumerate}
\end{remark}
The proof of \eqref{I:V1} and \eqref{I:V2} is very close to the proof of Claim 2.5 in \cite[Appendix A]{DuKeMe12P} and we omit it.
\subsubsection{Pointwise bounds for solutions with the compactness property}
\label{SSS:pointwise}
We recall here pointwise bounds on solutions of \eqref{CP} satisfying the assumptions of Proposition \ref{P:rigidityA}, which follow essentially from \cite{KeMe11}.
\begin{prop}
\label{P:pointwise1}
 Let $u$ be as in Proposition \ref{P:rigidityA} and assume that $u$ is global. Then there exists a constant $C>0$ such that
\begin{equation*}
\forall t\in \RR,\quad \forall R\geq 1,\quad \sqrt{R}|u(R,t)|+\int_{R}^{+\infty} |r \partial_r u(r,t)|^2\,dr+R^{p-3}\int_R^{+\infty} |r\partial_t u(r,t)|^2\,dr \leq \frac{C}{\sqrt{R}}. 
\end{equation*}
\end{prop}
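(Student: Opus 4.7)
The statement combines a pointwise decay $|u(R,t)|\leq C/R$ with two weighted exterior integral bounds, and the paper attributes it essentially to Theorem~3.11 of \cite{KeMe11}. My plan is to reconstruct that argument by converting the compactness/scaling hypotheses of Proposition~\ref{P:rigidityA} into uniform exterior decay, using the radial structure of the 3D wave equation and the fact that $u$ is assumed global.

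First, I would extract a uniform $\Hcr$-tail estimate. The compactness of $K$ in $\Hcr$, together with the scale invariance of $\HcrA$ and the lower bound $\lambda(t)\geq \lambda_0>0$, implies that for every $\eps>0$ there exists $R_\eps$ with $\sup_t\|\vu(t)\|_{\Hcr(|x|>R_\eps)}<\eps$: tails of the rescaled profiles translate to tails of $\vu(t)$ itself because $\lambda$ stays bounded away from $0$. Feeding this into the radial Sobolev embedding $|f(r)|\leq Cr^{s_p-3/2}\|f\|_{\HcrA}=Cr^{-2/(p-1)}\|f\|_{\HcrA}$, valid for radial $f\in\HcrA$ since $s_p\in(1/2,3/2)$, would yield a crude pointwise bound $|u(r,t)|\leq C_\eps r^{-2/(p-1)}$ in the exterior, uniformly in $t\in\RR$.

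Next, I would bootstrap this crude decay to the sharp $1/r$ rate using the equation. Writing the radial problem as $\partial_t^2(ru)-\partial_r^2(ru)=-r\,|u|^{p-1}u$ and applying the 1D d'Alembert/Duhamel formula for $ru$ along outgoing cones, the free part is controlled by the small exterior $\Hcr$ norm from the first step and the nonlinear integral is estimated by inserting the crude decay; because $p>5$ a Duhamel iteration closes and delivers $|ru(r,t)|\leq C$ uniformly in $t$, i.e.\ $\sqrt R\,|u(R,t)|\leq C/\sqrt R$ as claimed. Differentiating the same representation produces the $\dot H^1$-type tail $\int_R^\infty (r\partial_r u)^2\,dr\leq C/\sqrt R$. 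The anomalously strong bound $R^{p-3}\int_R^\infty (r\partial_t u)^2\,dr\leq C/\sqrt R$ then follows from the equation, once one notes that in the exterior $\partial_t u$ is driven only by the already-tiny nonlinear source $|u|^{p-1}u\lesssim r^{-p}$, which after one application of Duhamel yields the predicted extra power of $R$.

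The main obstacle is this bootstrap: for $p>5$ the Strauss decay $r^{-2/(p-1)}$ is strictly weaker than the target $r^{-1}$, so improving it requires a careful Duhamel iteration that exploits the supercriticality, together with the uniform exterior $\Hcr$-smallness from the first step to control the free-wave piece. The very strong weighted estimate on $\partial_t u$, which encodes the ``almost stationary'' character of compact trajectories, is the most delicate point and is precisely what later makes the comparison with the singular stationary solution $Z_\ell$ of Proposition~\ref{P:singular} feasible.
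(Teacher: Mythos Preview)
The paper does not give its own proof of this proposition: it simply cites \cite[Theorem~3.1, Corollary~3.7]{KeMe11}. Your proposal is therefore not being compared against an argument in the present paper but against the Kenig--Merle pointwise-decay machinery, and in broad strokes it matches: compactness of $K$ together with $\lambda(t)\geq\lambda_0>0$ gives uniform exterior $\Hcr$-smallness; the radial Sobolev/Strauss inequality converts this into the crude decay $|u(r,t)|\lesssim r^{-2/(p-1)}$; and one then bootstraps via the 1D Duhamel representation for $ru$ along characteristics. This is exactly the scheme the paper invokes in its sketch of the companion Proposition~\ref{P:pointwise2}, where the intermediate bounds \eqref{pointwise_a}--\eqref{pointwise_c} and the functions $g_1,g_2,g_3$ of \cite{KeMe11} appear.

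Two points where your sketch is looser than the actual \cite{KeMe11} argument. First, the bootstrap from $r^{-2/(p-1)}$ to $r^{-1}$ is not a one-shot Duhamel estimate but a genuine iteration (this is the content of Lemmas~3.4--3.6 in \cite{KeMe11}, referenced in the paper's sketch of Proposition~\ref{P:pointwise2}); you acknowledge this at the end, but your Step~3 reads as if a single pass suffices. Second, your explanation of the strong $\partial_t u$ bound is not quite right: saying ``$\partial_t u$ is driven only by the source $|u|^{p-1}u\lesssim r^{-p}$'' conflates the equation for $u$ with an equation for $\partial_t u$. In \cite{KeMe11} the time-derivative bound comes from differentiating the characteristic representation for $ru$ and controlling the resulting free piece via the exterior smallness of $u_1$ in $\HcrB$, together with the already-established pointwise bound on $u$ inside the Duhamel integral; the extra power of $R$ reflects the faster decay of the nonlinear source once $|u|\leq C/r$ is known, but the free-wave contribution must be handled separately.
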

(see \cite[Theorem 3.1, Corollary 3.7]{KeMe11}).
\begin{prop}
\label{P:pointwise2}
 Let $u$ be as in Proposition \ref{P:rigidityA} and assume that $T_+=+\infty$ and that $T_-$ is finite. Then there exists a constant $C>0$ such that
\begin{equation*}
\forall t\in (T_-,\infty),\quad \forall R\geq 1,\quad \sqrt{R}|u(R,t)|+\int_{R}^{+\infty} |r \partial_r u(r,t)|^2\,dr+R^{p-3}\int_R^{+\infty} |r\partial_t u(r,t)|^2\,dr \leq \frac{C}{\sqrt{R}}. 
\end{equation*}
\end{prop}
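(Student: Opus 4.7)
The plan is to adapt the proof of Proposition \ref{P:pointwise1} (which comes from \cite[Theorem 3.1, Corollary 3.7]{KeMe11}) to the present setting. The hypotheses differ only in that $T_-$ is finite rather than $-\infty$, so the strategy is to rerun the KeMe11 argument and verify that at each step only forward-in-time information is used. Since $T_+(u)=+\infty$, forward propagation from any $t\in(T_-,+\infty)$ is unrestricted, which is the key reason the estimate survives in this regime.

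First, I would establish the pointwise bound $\sqrt{R}\,|u(R,t)|\leq C$ for all $R\geq 1$ and $t\in(T_-,\infty)$. This is a consequence of the compactness of $K$ in $\Hcr$: the rescaled profile $\lambda(t)^{-2/(p-1)}u(\cdot/\lambda(t),t)$ lies in a fixed compact subset of $\Hcr$, hence is uniformly bounded in $L^{q_p}$, which via Strauss' lemma \eqref{B24} yields a uniform pointwise decay. The lower bound $\lambda(t)\geq c>0$ ensures that unscaling preserves uniformity over $t$.

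Second, for the two weighted exterior $L^2$-norms, I would use Lemma \ref{L:linear} forward in time from each time $t_0=t\in(T_-,\infty)$, applied to the linear wave $S(\sigma)\vec u(t_0)$ on the region $\{|x|\geq R+\sigma\}$ for $\sigma\geq 0$. Since the exterior data $\vec u(t_0)|_{|x|\geq R}$ is small in $\dot H^1\times L^2$ thanks to Step 1 and the weight $R$, one can write the actual solution as $u=v+w$ on the exterior, where $v=S(\sigma)\vec u(t_0)$ is the linear evolution and $w$ is the nonlinear remainder; invoking Lemma \ref{L:CP} (with $V$ chosen as a cutoff version of $u$) bounds $w$ by a higher power of the small norm, with gain $r_0^{-(p-5)/2}$, so $w$ is negligible. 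Taking $\sigma\to+\infty$ (possible because $T_+=+\infty$) and combining the channel inequality with the compactness of $K$ at time $t_0+\sigma$ recovers the desired bounds on the exterior norms of $\partial_r u$ and $\partial_t u$.

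The principal obstacle is verifying that every channel-of-energy invocation in \cite{KeMe11} can be chosen in its ``for all $t\geq 0$'' branch rather than the ``for all $t\leq 0$'' branch. In the globally defined case treated in Proposition \ref{P:pointwise1} the choice is a matter of convenience, but here it is essential since past evolution is limited by $T_-$. Once this is checked, all constants depend only on the compact set $\overline K$ and on $c=\inf\lambda>0$, so the resulting inequality is uniform in $t\in(T_-,\infty)$, as required.
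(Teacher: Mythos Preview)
Your proposal misidentifies the mechanism behind the KeMe11 pointwise estimates, and as written it has genuine gaps.

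First, Step 1 does not yield the claimed bound. Compactness of $K$ in $\Hcr$ gives, via the radial $\dot H^{s_p}$ Sobolev inequality (Lemma~3.2 of \cite{KeMe11}), only $|u(R,t)|\leq C R^{-2/(p-1)}$; since $p>5$ this is strictly weaker than the target $|u(R,t)|\leq C/R$, and in fact $\sqrt{R}\,|u(R,t)|$ is not even bounded by it. The inequality \eqref{B24} you invoke is for $\dot H^1$ functions, and there is no a priori $\dot H^1$ bound available here. The upgrade from $R^{-2/(p-1)}$ to $R^{-1}$ is precisely the content of \eqref{pointwise_a} and requires the full KeMe11 bootstrap.

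Second, the proof of Theorem~3.1 in \cite{KeMe11} does \emph{not} use channels of energy. It is based on the one-dimensional Duhamel/D'Alembert representation for $r u$ (integrating the nonlinearity along characteristics) together with an iterative bootstrap (Lemmas~3.4 and~3.6 there). Lemma~\ref{L:linear} is a dichotomy ``for all $t\geq 0$ \emph{or} for all $t\leq 0$'' determined by the data, not a choice; the ``principal obstacle'' you flag is therefore not a verification step but a real obstruction to the scheme you describe. Moreover, the device used later in the paper to close channel arguments in both time directions (Corollary~\ref{C:nochannel}) itself invokes Proposition~\ref{P:pointwise2}, so appealing to it here would be circular.

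What the paper actually does is rerun the KeMe11 integral-formula bootstrap restricted to positive times, using crucially the compact support $\supp \vec u(\cdot,t)\subset B_{t-T_-}$ that follows from finite blow-up at $T_-$: whenever a characteristic integral in the KeMe11 argument would reach past $T_-$, the support condition truncates it (this is the point of the modified formulas for Lemmas~3.4 and~3.6 listed in the sketch). Your outline never mentions this support property, which is the essential new ingredient over the globally defined case.
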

\begin{proof}[Sketch of proof]
By time translation, we can assume $T_-=0$.
As in \cite{KeMe11}, it is sufficient to show that $u$ satisfies the following bounds, which are the analogs of Theorem 3.1 of \cite{KeMe11}:
\begin{gather}
\label{pointwise_a}
\forall t\in (0,+\infty),\quad \forall r\geq 1,\quad |u(r,t)|\leq \frac{C}{r}\\
\label{pointwise_b}
\forall t\in (0,+\infty),\quad \forall R\geq 1,\quad \left(\int_{R}^{+\infty} |r \partial_r u(r,t)|^m\,dr\right)^{1/m}\leq \frac{C}{R^{1-a}}\\
\label{pointwise_c}
\forall t\in (0,+\infty),\quad \forall R\geq 1,\quad \left(\int_{R}^{+\infty} |r \partial_r u(r,t)|^2\,dr\right)^{1/m}\leq \frac{C}{R^{p(1-a)}},
 \end{gather}
where $a=\frac{2}{p-1}$, $m=\frac{1}{a}$. Note that 
\begin{equation}
\label{support_u}
\forall t>0,\qquad \supp u(\cdot,t)\subset B_{t}.
 \end{equation} 
The proofs of \eqref{pointwise_a}, \eqref{pointwise_b} and \eqref{pointwise_c} are the same as the corresponding proofs in Theorem 3.1 of \cite{KeMe11}, restricting to positive times. We refer the reader to this paper, highlighting the following small differences:
\begin{itemize}
 \item $g_1(r)$, $g_2(r)$, $g_3(r)$ should be defined as supremum over $t>0$ (instead of $t\in \RR$).  
\item In the proof of Lemma 3.4, at the end of page 1046, if $Mr\geq t_0$, the formula for $z_1(r+s,t_0)$ should be:
$$ z_1(r+s,t_0)=-\int_0^{r'} (r+s+\tau)|u|^{p-1}u(r+s+\tau,t_0-\tau)\,d\tau,$$
where (in view of \eqref{support_u}), we can take $r'$ defined by the equality $r+s+r'=t_0-r'$.
\item In the proof of Lemma 3.6, if $\frac{r_0}{2}\geq t_0$, replace (3.12) by
$$ r_0u_0(r_0,t_0)=-\frac{1}{2}\int_0^{t_0} \int_{r_0-(t_0-\tau)}^{r_0+(t_0-\tau)}\alpha|u|^{p-1}u\left(\alpha,\tau\right)\,d\alpha\,d\tau.$$
\end{itemize}
With these modifications, the proof works exactly the same as in the globally defined case and we omit it.
\end{proof}
\begin{corol}
\label{C:nochannel}
 Let $u$ satisfy the assumptions of Proposition \ref{P:rigidityA}, and $(T_-,T_+)$ its maximal interval of definition. Assume $0\in (T_-,T_+)$. Then 
\begin{equation}
\label{B33}
\forall t\in (T_-,T_+),\quad \vec{u}(t)\in \hdot\times L^2
\end{equation} 
and 
\begin{multline}
 \label{B34} \forall R>0,\quad \inf_{0<t<T_+} \int_{|x|\geq R+|t|} |\nabla u(x,t)|^2+(\partial_tu(x,t))^2\,dx\\
=\inf_{T_-<t<0} \int_{|x|\geq R+|t|} |\nabla u(x,t)|^2+(\partial_tu(x,t))^2\,dx=0.
\end{multline}
\end{corol}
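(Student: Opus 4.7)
The plan is to derive both conclusions directly from the pointwise bounds of Propositions \ref{P:pointwise1}--\ref{P:pointwise2}, which together with the time-reversal symmetry $t\mapsto -t$ of the hypotheses of Proposition \ref{P:rigidityA} cover both time directions. A preliminary point is to verify that one of Propositions \ref{P:pointwise1} or \ref{P:pointwise2} actually applies, i.e.\ that at least one of $T_\pm$ is infinite; this should follow from the compactness of $K$ combined with $\lambda\geq c>0$ and a local continuation argument in $\Hcr$ (the rescaled orbit remains in a fixed compact subset of $\Hcr$, precluding blow-up at a finite endpoint). This reduction to a one-sided globally defined solution is the main obstacle.

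For \eqref{B33}, fix $t\in (T_-,T_+)$ and split $\RR^3=\{|x|\leq 1\}\cup\{|x|\geq 1\}$. By radiality,
$$\int_{|x|\geq 1}|\nabla u(x,t)|^2\,dx = 4\pi\int_1^{+\infty}(r\partial_r u)^2\,dr \leq C,$$
with an analogous (faster-decaying) estimate $\int_{|x|\geq 1}(\partial_t u)^2\,dx\leq C$ coming from the $R^{p-3}$ weight, by Proposition \ref{P:pointwise1} or \ref{P:pointwise2}. For $|x|\leq 1$, use that $p>5$ gives $s_p>1$, so $0<s_p-1<1/2$. The Sobolev embedding
$$\dot H^{s_p-1}(\RR^3)\hookrightarrow L^q(\RR^3),\qquad q=\frac{6}{5-2s_p}>2,$$
applied to $|\nabla|u(t)$ and $\partial_tu(t)$ (both elements of $\dot H^{s_p-1}$ by the hypotheses of the Cauchy problem), yields $\nabla u(t),\partial_tu(t)\in L^q(\RR^3)$, hence in $L^2(B_1)$ by H\"older. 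Combining the interior and exterior parts gives $\vec u(t)\in \hdot\times L^2$.

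For \eqref{B34}, fix $R>0$ and consider the first infimum. After the reduction above assume $T_+=+\infty$, and pick any sequence $t_n\to+\infty$. Applying the pointwise bound with exterior radius $R+t_n$ (which is $\geq 1$ for large $n$) gives
$$\int_{|x|\geq R+t_n}\bigl(|\nabla u|^2+(\partial_tu)^2\bigr)\,dx\leq \frac{C}{\sqrt{R+t_n}}\longrightarrow 0\quad\text{as }n\to\infty,$$
so the first infimum vanishes. The second infimum follows symmetrically by applying the same argument to $\tilde u(x,t):=u(x,-t)$, which satisfies the hypotheses of Proposition \ref{P:rigidityA} with the roles of $T_\pm$ swapped.
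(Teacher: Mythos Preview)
Your argument for the case where the relevant endpoint is infinite is correct and matches the paper. The gap is precisely the preliminary reduction you flag as ``the main obstacle,'' and your sketched continuation argument does not go through. Compactness of $K$ together with $\inf\lambda>0$ forces, at a finite endpoint $T_+$, the standard lower bound $(T_+-t)\lambda(t)\ge c>0$, so $\lambda(t)\to\infty$ as $t\to T_+$. A uniform local existence time $\tau_0$ for data in $\overline{K}$ (in the rescaled variables) then only yields an interval of length $\tau_0/\lambda(t_n)\to 0$ in the original time variable, which does not push past $T_+$. So you cannot conclude a priori that at least one of $T_\pm$ is infinite, and the case where both are finite is genuinely uncovered by your argument.

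The paper does not attempt this reduction at all; it handles finite endpoints directly via compact spatial support. If, say, $T_+<\infty$, the standard argument (finite speed of propagation plus compactness of $K$, as in \cite[Lemma~4.15]{KeMe11}) gives
\[
\supp\vec u(\cdot,t)\subset B_{T_+-t}\quad\text{for all }t\in(T_-,T_+).
\]
Then \eqref{B33} is immediate: a compactly supported element of $\Hcr$ lies in $H^{s_p}\times H^{s_p-1}\subset \hdot\times L^2$ since $s_p>1$. And the forward infimum in \eqref{B34} is zero because the exterior integral $\int_{|x|\ge R+t}(\ldots)\,dx$ is identically zero once $R+t\ge T_+-t$, i.e.\ for $t$ close enough to $T_+$. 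The case $T_+=\infty$ uses Proposition~\ref{P:pointwise1} or~\ref{P:pointwise2} exactly as you describe. The backward infimum is treated the same way according to whether $T_-$ is finite or infinite. Thus no preliminary reduction is needed, and all four combinations of finite/infinite endpoints are covered by a simple case split.
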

\begin{proof}
 If $u$ is global, \eqref{B33} follows immediately from Proposition \ref{P:pointwise1}. If not, $\vec{u}(\cdot,t)$ is compactly supported and in $\Hcr$ for all $t$, and \eqref{B33} also holds.

Let us prove: 
$$ \inf_{0<t<T_+} \int_{|x|\geq R+|t|} |\nabla u(x,t)|^2+(\partial_tu(x,t))^2\,dx=0.$$
The proof of the other equality in \eqref{B34} is the same.
If $T_+=\infty$, this follows from Proposition \ref{P:pointwise1} or \ref{P:pointwise2}. If $T_+<\infty$, then
$$ \forall t\in (T_-,T_+), \quad \supp \vec u(\cdot,t)\subset \{|x|\leq T_+-t\},$$
which shows that $\int_{|x|\geq R+|t|} |\nabla u(x,t)|^2+(\partial_tu(x,t))^2\,dx=0$ if $t>0$ is such that $R+t\geq T_+-t$, concluding the proof.
\end{proof}

\subsection{Proof of the rigidity result}
\label{SS:proof}
This subsection is devoted to the proof of Proposition \ref{P:rigidityA}. The proof is an adaptation of the arguments of section 2 of our work \cite{DuKeMe12P}  on the energy-critical wave equation to the supercritical setting. Note that in the supercritical case, the singular stationary solution $Z_1$ given by Proposition \ref{P:singular} plays the role of the (regular) stationary solution $W$ of the energy-critical problem.

In all the subsection, we let $u$ be a solution of \eqref{CP} satisfying the assumptions of Proposition \ref{P:rigidityA}. We denote by $(T_-,T_+)$ its maximal interval of definition (which contains $0$), and:
$$ v(r,t)=ru(r,t), \quad v_0(r)=ru_0(r),\quad v_1(r)=ru_1(r).$$
By a straightforward integration by parts, if $r_0>0$ and $t\in (T_-,T_+)$, 
\begin{equation}
 \label{B35} \int_{r_0}^{+\infty} \left(\partial_r u(t,r)\right)^2r^2\,dr=\int_{r_0}^{+\infty}\left(\partial_r v(r,t)\right)^2\,dr+r_0u^2(r_0,t).
\end{equation} 
Note that the integrals in \eqref{B35} are finite thanks to Corollary \ref{C:nochannel}. We divide the proof into a few lemmas.
\begin{lemma}
 \label{L:B8}
There exist constants $\delta_1,C_1>0$ (independent of $u$ satisfying the assumptions of Proposition \ref{P:rigidityA}) such that, for any $r_0>0$, if
\begin{equation}
 \label{B36}
r_0^{-\frac{p-5}{p-1}}\int_{r_0}^{+\infty} \left((\partial_r u_0)^2+u_1^2\right)r^2\,dr=\delta\leq \delta_1,
\end{equation} 
then
\begin{equation}
 \label{B37}
\int_{r_0}^{+\infty} \left((\partial_rv_0)^2+v_1^2\right)\,dr\leq \frac{C_1}{r_0^{2p-5}}v_0^{2p}(r_0),
\end{equation} 
and, if $r$ and $r'$ satisfy $r_0\leq r<r'\leq 2r$, 
\begin{equation}
 \label{B38}
\left|v_0(r)-v_0(r')\right|\leq \frac{\sqrt{C_1}}{r^{p-3}}|v_0(r)|^p\leq \sqrt{C_1} |v_0(r)|\delta^{\frac{p-1}{2}}.
\end{equation} 
\end{lemma}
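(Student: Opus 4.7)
The plan is to transpose to the supercritical setting the exterior channel-of-energy argument of \cite{DuKeMe12P}, combining the linear estimate of Lemma \ref{L:linear}, the small-data Cauchy theory of Lemma \ref{L:CP} (in the case $V=0$), and the vanishing of the exterior nonlinear energy given by Corollary \ref{C:nochannel}. First I truncate the initial data: define $\tilde u_0(r)=u_0(r)$ and $\tilde u_1(r)=u_1(r)$ for $r\geq r_0$, extended by $\tilde u_0\equiv u_0(r_0)$ and $\tilde u_1\equiv 0$ for $r<r_0$. By \eqref{B35},
\begin{equation*}
\|(\tilde u_0,\tilde u_1)\|_{\hdot\times L^2}^2 = 4\pi\bigl(E+v_0(r_0)^2/r_0\bigr) = 4\pi\delta\,r_0^{(p-5)/(p-1)},\qquad E:=\int_{r_0}^\infty\bigl((\partial_rv_0)^2+v_1^2\bigr)\,dr.
\end{equation*}
Choosing $\delta_1$ with $4\pi\delta_1\leq \delta_0^2$, Lemma \ref{L:CP} with $V=0$ provides a global solution $h$ of $\partial_t^2h-\Delta h=|\chi_{r_0}h|^{p-1}(\chi_{r_0}h)$ with data $(\tilde u_0,\tilde u_1)$, and \eqref{B23} controls its distance from the linear flow $u^L(t):=S(t)(\tilde u_0,\tilde u_1)$:
\begin{equation*}
\sup_{t\in\RR}\|\vec h(t)-\vec u^L(t)\|_{\hdot\times L^2}\leq Cr_0^{-(p-5)/2}\|(\tilde u_0,\tilde u_1)\|_{\hdot\times L^2}^{p}.
\end{equation*}
Since $\chi_{r_0}\equiv 1$ on $\{r\geq r_0/2\}$ and $(\tilde u_0,\tilde u_1)=(u_0,u_1)$ on $\{r\geq r_0\}$, finite speed of propagation gives $u(r,t)=h(r,t)$ on $\{r\geq r_0+|t|\}$ for $t\in(T_-,T_+)$.

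Writing $v=ru$, $\tilde v=rh$, $v^L=ru^L$, Lemma \ref{L:linear} applied to $u^L$ yields, WLOG for all $t\geq 0$,
\begin{equation*}
\int_{r_0+t}^{\infty}\bigl((\partial_rv^L)^2+(\partial_tv^L)^2\bigr)\,dr \geq \frac{E}{2}
\end{equation*}
(since $\tilde v_j=v_j$ on $[r_0,\infty)$). By the positive-time half of Corollary \ref{C:nochannel} there are $t_n\to T_+^-$ along which $\int_{|x|\geq r_0+t_n}(|\nabla u|^2+(\partial_tu)^2)\,dx\to 0$, which via \eqref{B35} forces $\int_{r_0+t_n}^\infty((\partial_rv)^2+(\partial_tv)^2)(\cdot,t_n)\,dr\to 0$. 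Using that $v\equiv\tilde v$ on $\{r\geq r_0+t_n\}$ and the identity $\|f\|_{\hdot}^2=4\pi\int_0^\infty(\partial_r(rf))^2\,dr$ for radial $f\in\hdot$ (valid since \eqref{B24} forces $\sqrt{r}f(r)\to 0$ at the origin), the triangle inequality on $L^2([r_0+t_n,\infty);dr)$, together with the bound above and $n\to\infty$, produces the implicit inequality
\begin{equation*}
E\leq C' r_0^{-(p-5)}\bigl(E+v_0(r_0)^2/r_0\bigr)^{p}.
\end{equation*}

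To conclude, I bootstrap: substituting $E+v_0(r_0)^2/r_0=\delta r_0^{(p-5)/(p-1)}$ to extract $p-1$ powers gives $E\leq C'\delta^{p-1}(E+v_0(r_0)^2/r_0)$. Shrinking $\delta_1$ so that $C'\delta_1^{p-1}\leq 1/2$ absorbs $E/2$ and yields $E\leq v_0(r_0)^2/r_0$; re-injecting into the $p$-th power estimate establishes \eqref{B37}. For \eqref{B38}, note that since $(p-5)/(p-1)>0$, the hypothesis \eqref{B36} holds for every $r\geq r_0$ in place of $r_0$ with a smaller $\delta$, so \eqref{B37} applies at $r$: $\int_r^\infty(\partial_\rho v_0)^2\,d\rho\leq C_1v_0(r)^{2p}/r^{2p-5}$. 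Cauchy--Schwarz on $[r,r']$ with $r'\leq 2r$ produces the first inequality of \eqref{B38}. For the second, Strauss' bound \eqref{B24} applied to the truncation of $u_0$ at level $r$ gives $v_0(r)^2/r\leq C''\delta\, r^{(p-5)/(p-1)}$, so $v_0(r)^{p-1}/r^{p-3}\leq (C''\delta)^{(p-1)/2}$, completing the proof. The main obstacle is the bootstrap closure: $\delta_1$ must be chosen small enough to both satisfy the smallness hypothesis of Lemma \ref{L:CP} and to absorb the $E$ term on the right of the implicit inequality, while remaining absolute (independent of $u$).
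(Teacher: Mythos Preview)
Your argument is correct and follows essentially the same route as the paper's proof: truncate the data outside $r_0$, solve the modified equation via Lemma~\ref{L:CP} with $V=0$, compare with the free evolution via \eqref{B23}, apply the linear channel estimate of Lemma~\ref{L:linear}, identify with $u$ on the exterior cone by finite speed of propagation, invoke Corollary~\ref{C:nochannel} to annihilate the exterior energy, and close the resulting implicit inequality \eqref{B43} by a bootstrap using \eqref{B36}; the deduction of \eqref{B38} from \eqref{B37} via Cauchy--Schwarz and \eqref{B35} is likewise identical to the paper's. One small remark: your parenthetical justification of the identity $\|f\|_{\hdot}^2=4\pi\int_0^\infty(\partial_r(rf))^2\,dr$ via \eqref{B24} is not quite right, since \eqref{B24} only gives boundedness of $\sqrt{r}f(r)$ near the origin, not vanishing; however the identity is standard (by density of $C_0^\infty$, or via Hardy's inequality), and in any case you only need the one-sided inequality on $[r_0+|t|,\infty)$, which follows directly from \eqref{B35} by dropping the nonnegative boundary term.
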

\begin{proof}
 We first note that \eqref{B38} is an easy consequence of \eqref{B37}. Indeed, assume that \eqref{B36} implies \eqref{B37}, and note that if \eqref{B36} holds for some $r_0$, it is still valid for any $r\geq r_0$. Then we have (using \eqref{B37} with $r$ instead of $r_0$):
\begin{multline*}
 |v_0(r)-v_0(r')|\leq \int_{r}^{r'} \left|\partial_r v_0(s)\right|\,ds \leq \sqrt{(r'-r)\int_{r}^{r'} |\partial_r v_0|^2}\\
\leq \sqrt{C_1}r^{3-p} |v_0(r)|^p\leq \sqrt{C_1} \delta^{\frac{p-1}{2}}|v_0(r)|.
\end{multline*}
The last inequality follows from the inequality $\frac{1}{r} (v_0(r))^2\leq \int_{r}^{+\infty}(\partial_ru_0)^2\rho^2\,d\rho$, consequence of \eqref{B35}, and from assumption \eqref{B36}.

We next prove \eqref{B37}. If $(f,g)\in \hdot(\RR^3)\times L^2(\RR^3)$ is radial, we denote by $\Psi_R(f,g)$ the following element $(\tf,\tg)$ of $\hdot\times L^2$:
\begin{equation}
 \label{B39}
\left\{
\begin{aligned}
 (\tf,\tg)(r)&=(f,g)(r)\text{ if }r\geq R\\
(\tf,\tg)(r)&=(f(R),0)\text{ if } r\leq R.
\end{aligned}\right.
\end{equation}
Let $u_{\lin}(r,t)=S(t)(u_0,u_1)$, $v_{\lin}=ru_{\lin}$. Define $(\tu_0,\tu_1)=\Psi_{r_0}(u_0,u_1)$, $\tu_{\lin}=S(t)(\tu_0,\tu_1)$, and $\tv_{\lin}=r\tu_{\lin}$. Finally, let $\tu$ be the solution of 
\begin{equation}
 \label{B40}
\left\{
\begin{gathered}
\partial_t^2\tu-\Delta \tu=\left|\chi_{r_0}\tu\right|^{p-1}\chi_{r_0}\tu\\
(\tu,\partial_t \tu)_{\restriction t=0} =(\tu_0,\tu_1).
\end{gathered}
\right.
\end{equation}   
By Lemma \ref{L:CP} (with $V=0$) if $\delta_1$ is small enough, $\tu$ is globally defined and (using the formula \eqref{B35}),
\begin{multline}
 \label{B41}
\left\|(\vec{\tu}(t)-\vec{\tu}_{\lin})(t)\right\|_{\Hen}\leq \frac{C}{\sqrt{r_0}^{p-5}}\left\|(\tu_0,\tu_1)\right\|^p_{\Hen}\\
\leq \frac{C}{\sqrt{r_0}^{p-5}}\left(\int_{r_0}^{+\infty} (\partial_r v_0)^2+v_1^2\,dr+\frac{1}{r_0}v_0^2(r_0)\right)^{\frac{p}{2}}.
\end{multline} 
Arguing exactly as in \cite[Proof of Lemma 2.8]{DuKeMe12P}, we deduce, using Lemma \ref{L:linear}, that the following holds for all $t\leq 0$ or for all $t\geq 0$:
\begin{multline}
 \label{B42}
\int_{r_0}^{+\infty} (\partial_rv_0)^2+v_1^2\,dr\\
\leq 2\int_{r_0+|t|}^{+\infty} (\partial_r\tv_{\lin}(r,t))^2+(\partial_t\tv_{\lin}(r,t))^2\,dr\leq 2\int_{r_0+|t|}^{+\infty} \left((\partial_r\tu_{\lin}(r,t))^2+(\partial_t\tu_{\lin}(r,t))^2\right)r^2dr\\
\leq C\int_{r_0+|t|}^{+\infty} \left((\partial_r\tu(r,t))^2+(\partial_t\tu(r,t))^2\right)r^2dr+\frac{C}{r_0^{p-5}} \left(\int_{r_0}^{+\infty} (\partial_r v_0)^2+v_1^2\,dr+\frac{1}{r_0}v_0^2(r_0)\right)^{p}.
\end{multline}
By finite speed of propagation, we deduce that \eqref{B42} holds for all $t\in (0,T_+)$ or for all $t\in (T_-,0)$, with $\tu$ replaced by $u$ in the last line. By Corollary \ref{C:nochannel}, 
$$\inf_{t\in [0,T_+)}\int_{r_0+|t|}^{+\infty} \left((\partial_ru(r,t))^2+(\partial_tu(r,t))^2\right)r^2dr
=\inf_{t\in (T_-,0]}\int_{r_0+|t|}^{+\infty} \left((\partial_ru(r,t))^2+(\partial_tu(r,t))^2\right)r^2dr=0
$$
and we get:
\begin{equation}
 \label{B43}
\int_{r_0}^{+\infty} (\partial_r v_0)^2+v_1^2\,dr \leq \frac{C}{r_0^{p-5}}\left(\int_{r_0}^{+\infty} (\partial_r v_0)^2+v_1^2\,dr+\frac{1}{r_0}v_0^2(r_0)\right)^p.
\end{equation} 
By formula \eqref{B35} and assumption \eqref{B36},
\begin{equation}
\label{B44}
 \frac{1}{r_0^{\frac{p-5}{p-1}}} \int_{r_0}^{+\infty} v_1^2+(\partial_r v_0)^2\,dr\leq \delta_1,
\end{equation} 
and it is easy to see that \eqref{B37} follows from \eqref{B43} and \eqref{B44} if $\delta_1>0$ is small enough, concluding the proof of the lemma.
\end{proof}
\begin{lemma}
\label{L:B9}
 There exists $\ell\in \RR$ such that
\begin{equation}
 \label{B45} \lim_{r\to \infty} v_0(r)=\ell.
\end{equation} 
Furthermore, there exists $C>0$ such that
\begin{equation}
\label{B46} 
\forall r\geq 1,\quad |v_0(r)-\ell|\leq \frac{C}{r^{p-3}}. 
\end{equation} 
\end{lemma}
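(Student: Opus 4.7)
\textbf{Proof plan for Lemma \ref{L:B9}.} The idea is to combine a uniform pointwise bound on $v_0=ru_0$ with the dyadic increment estimate \eqref{B38} furnished by Lemma \ref{L:B8}, and then telescope.

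\emph{Step 1 (preparing the hypotheses of Lemma \ref{L:B8}).} By Corollary \ref{C:nochannel}, $\vec u(0)\in \hdot\times L^2$, so the tail $\int_{r_0}^{+\infty}\bigl((\partial_r u_0)^2+u_1^2\bigr)r^2\,dr$ tends to $0$ as $r_0\to\infty$. Together with the prefactor $r_0^{-(p-5)/(p-1)}$, which also tends to $0$ since $p>5$, the quantity $\delta$ of \eqref{B36} becomes arbitrarily small at large scales. Pick $R_0>1$ so large that $\delta\le \delta_1$ at every scale $r_0\ge R_0$; then Lemma \ref{L:B8} applies at each such scale. In parallel, use Proposition \ref{P:pointwise1} (global case) or Proposition \ref{P:pointwise2} (one-sided global case) at $t=0$ to get $\sqrt{R}\,|u_0(R)|\le C/\sqrt{R}$, i.e.\ $|v_0(R)|\le C$ for $R\ge 1$; the remaining case in which $\vec u(0)$ has compact support is trivial. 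In all cases we have a uniform bound $|v_0(r)|\le M$ on $[1,\infty)$.

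\emph{Step 2 (dyadic increments).} Since the hypothesis \eqref{B36} holds at every scale $r\ge R_0$, Lemma \ref{L:B8} yields \eqref{B38} on $[r,2r]$. Taking $r'=2r$ and using $|v_0|\le M$ gives
$$
|v_0(2r)-v_0(r)|\le \frac{\sqrt{C_1}}{r^{p-3}}|v_0(r)|^p\le \frac{C_2}{r^{p-3}}.
$$

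\emph{Step 3 (Cauchy criterion and limit).} For $R_0\le r_1<r_2$, choose $k\ge 0$ with $2^kr_1\le r_2<2^{k+1}r_1$ and telescope:
$$
v_0(r_2)-v_0(r_1)=\sum_{j=0}^{k-1}\bigl(v_0(2^{j+1}r_1)-v_0(2^jr_1)\bigr)+\bigl(v_0(r_2)-v_0(2^kr_1)\bigr).
$$
Each summand is bounded by $C_2/(2^jr_1)^{p-3}$ (the last one again by the first inequality in \eqref{B38} applied on $[2^kr_1,r_2]\subset[2^kr_1,2^{k+1}r_1]$). Summing the geometric series in $2^{-j(p-3)}$ (which converges since $p-3>0$) yields
$$
|v_0(r_2)-v_0(r_1)|\le \frac{C_3}{r_1^{p-3}}.
$$
Hence $\{v_0(r)\}_{r\to+\infty}$ is Cauchy and converges to some $\ell\in\RR$, establishing \eqref{B45}. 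Letting $r_2\to+\infty$ in the above inequality gives $|v_0(r_1)-\ell|\le C_3/r_1^{p-3}$ for $r_1\ge R_0$; absorbing the range $[1,R_0]$ into a larger constant (by continuity and boundedness of $v_0$) yields \eqref{B46}.

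\emph{Main obstacle.} The whole argument turns on having simultaneously (i) smallness of the weighted energy tail at some large scale and (ii) a scale-free $L^\infty$ bound on $v_0$. Part (i) is essentially free from Corollary \ref{C:nochannel} and the supercriticality $p>5$; part (ii) is the substantive input and is precisely what the pointwise decay estimates of Propositions \ref{P:pointwise1}--\ref{P:pointwise2} (taken from \cite{KeMe11}) provide. Once both are available, the dyadic telescoping is routine and automatically produces the sharp rate $1/r^{p-3}$ matching the decay of the stationary profile $Z_\ell$ from Proposition \ref{P:singular}.
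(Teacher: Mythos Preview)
Your proposal is correct and follows essentially the same approach as the paper: obtain a uniform bound $|v_0|\le M$ for large $r$ from the pointwise estimates (Proposition~\ref{P:pointwise1}) or compact support, feed this into the dyadic increment estimate \eqref{B38} of Lemma~\ref{L:B8}, and telescope over dyadic scales to get both the limit $\ell$ and the rate $r^{-(p-3)}$. The paper's case split is slightly leaner --- if $u$ is not global in both time directions then $v_0$ is already compactly supported (as in the proof of Corollary~\ref{C:nochannel}), so your ``one-sided global'' case via Proposition~\ref{P:pointwise2} is redundant though not incorrect --- and the paper telescopes only along the sequence $\{2^n r_0\}$ before invoking \eqref{B38} once more, whereas you telescope directly for arbitrary $r_1<r_2$; these are cosmetic differences.
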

\begin{proof}
 If $u$ is not global, then $v_0$ is compactly supported and the lemma is obvious. Assume that $u$ is global. Then by Proposition \ref{P:pointwise1}, $|v_0(r)|\leq C$ for some constant $C>0$ independent of $r>1$. Thus by \eqref{B38}, chosing $r_0\geq 1$ such that \eqref{B36} holds, we get, for $r\geq r_0$,
\begin{equation}
 \label{B47}
\left|v_0\left(2^{n+1}r\right)-v_0\left(2^{n}r\right)\right|\leq \frac{C}{\left(2^nr\right)^{p-3}}.
\end{equation} 
This shows that $\sum_{n\geq 0} \left|v_0(2^{n+1}r_0)-v_0(2^nr_0)\right|$ is finite, and thus the existence of
\begin{equation}
 \label{B48}
\ell =\lim_{n\to\infty} v_0(2^nr_0)\in \RR.
\end{equation}
By \eqref{B38},
\begin{equation}
 \label{B49} \lim_{r\to \infty} v_0(r)=\ell.
\end{equation}  
Finally, by \eqref{B47}, if $r\geq r_0$ we have
\begin{equation*}
\left|v_0(r)-\ell\right| \leq \sum_{j=0}^{+\infty} \left|v_0(2^jr)-v_0(2^{j+1}r)\right|\leq \frac{C}{r^{p-3}},
\end{equation*} 
which concludes the proof of Lemma \ref{L:B9}.
\end{proof}
\begin{lemma}
 \label{L:B10}
Let $\ell$ be as in Lemma \ref{L:B9} and assume $\ell=0$. Then $u=0$.
\end{lemma}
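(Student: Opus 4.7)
The plan is to show that the initial data $(u_0,u_1)$ vanishes identically, whence $u\equiv 0$ by uniqueness for \eqref{CP}. The main task is to prove that $v_0\equiv 0$; once this is done, $v_1\equiv 0$ will drop out of \eqref{B37}.

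The first ingredient is a rigidity alternative extracted from \eqref{B38}. Since $\ell=0$, the pointwise bound $|v_0(r)|\le C/r^{p-3}$ from \eqref{B46} gives $v_0(2^kr)\to 0$, so telescoping
\[
v_0(r)=-\sum_{k\ge 0}\bigl(v_0(2^{k+1}r)-v_0(2^k r)\bigr),
\]
applying \eqref{B38} to each summand, and bounding $|v_0(2^kr)|$ by $M(r_0):=\sup_{s\ge r_0}|v_0(s)|$, the resulting geometric series yields, for every $r_0$ at which \eqref{B36} holds,
\[
|v_0(r)|\le C_3\,M(r_0)^{p}\,r^{3-p},\qquad r\ge r_0,
\]
with $C_3=\sqrt{C_1}/(1-2^{3-p})$. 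Taking the supremum over $r\ge r_0$ gives $M(r_0)\le C_3\,M(r_0)^{p}\,r_0^{3-p}$, so either $M(r_0)=0$ or $M(r_0)\ge c\,r_0^{(p-3)/(p-1)}$ for a universal constant $c>0$.

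For $r_0$ sufficiently large the hypothesis \eqref{B36} is automatic (its integral tends to $0$), while \eqref{B46} forces $M(r_0)\to 0$ and $c\,r_0^{(p-3)/(p-1)}\to\infty$. The second alternative is therefore impossible, so $M(r_0)=0$, i.e.\ $v_0\equiv 0$ on $[r_0,\infty)$; combining with \eqref{B37} at the same $r_0$ gives $v_1\equiv 0$ there as well. Hence $(u_0,u_1)\equiv 0$ on $[r_0,\infty)$ for all sufficiently large $r_0$.

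Finally I bootstrap this vanishing down to the origin. Set $R_0:=\inf\{R>0:(u_0,u_1)\equiv 0\text{ on }[R,\infty)\}$; by the previous step $R_0<\infty$, and the claim is $R_0=0$. Suppose for contradiction $R_0>0$. Since $u_0\in\hdot$ by Corollary \ref{C:nochannel} and radial $\hdot$ functions in three dimensions are absolutely continuous on $(0,\infty)$, $v_0$ is continuous at $R_0$ with $v_0(R_0)=0$. Taking $r_0=R_0-\eps$ for $\eps>0$ small, the integrals appearing in \eqref{B36} reduce to integrals over $[r_0,R_0]$ and tend to $0$ by absolute continuity, so \eqref{B36} holds at $r_0$; simultaneously $M(r_0)\to 0$ by continuity of $v_0$ at $R_0$. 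The alternative from the previous paragraph then forces $M(r_0)=0$, and \eqref{B37} propagates the vanishing to $v_1$, giving $(u_0,u_1)\equiv 0$ on $[r_0,\infty)$ with $r_0<R_0$, contradicting the definition of $R_0$. Hence $R_0=0$ and $u\equiv 0$. The subtle step is this bootstrap: one must simultaneously arrange that \eqref{B36} remains valid arbitrarily close to $R_0$ from below, and that the small‑$M$ side of the alternative is the one activated, both of which rely on continuity of $v_0$ at $R_0$ and absolute continuity of the Lebesgue integral on the shrinking region $[R_0-\eps,R_0]$.
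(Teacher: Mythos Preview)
Your proof is correct and follows essentially the same two-step strategy as the paper: first establish that $(v_0,v_1)$ vanishes on $[r_0,\infty)$ for large $r_0$ via the tension between \eqref{B38} and \eqref{B46}, then bootstrap down to the origin using continuity of $v_0$ and \eqref{B37}. The only cosmetic difference is that you package the key estimate as a clean dichotomy $M(r_0)=0$ or $M(r_0)\ge c\,r_0^{(p-3)/(p-1)}$ and apply it uniformly in both steps, whereas the paper argues Step~1 by a direct growth-versus-decay comparison ($|v_0(2^n r_0)|\ge(3/4)^n|v_0(r_0)|$ against $|v_0(2^n r_0)|\le C(2^n r_0)^{3-p}$) and Step~2 by a single application of \eqref{B38} between a point $\rho_1<\rho_0$ with $v_0(\rho_1)\neq 0$ and $\rho_0$ itself.
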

\begin{proof}
 \EMPH{Step 1} We first show that $(v_0,v_1)$ is compactly supported. Let $r_0>0$ such that \eqref{B36} is satisfied. Then by \eqref{B38} in Lemma \ref{L:B8},  chosing $\delta=r_0^{-\frac{p-5}{p-1}} \int_{r_0}^{+\infty} \left((\partial_ru_0)^2+u_1^2\right)r^2\,dr$ small, we get
\begin{equation*}
\forall n\geq 0,\quad |v_0(2^{n+1}r_0)|\geq \frac{3}{4} |v_0(2^nr_0)|,
\end{equation*} 
and thus by induction on $n$,
$$ \forall n\geq 0,\quad |v_0(2^nr_0)|\geq \left(\frac{3}{4}\right)^n|v_0(r_0)|.$$
On the other hand, by Lemma \ref{L:B9},
$$ |v_0(2^nr_0)|=|v_0(2^nr_0)-\ell|\leq \frac{C}{\left(2^nr_0\right)^{p-3}}.$$
Combining, we get that for all $n\geq 0$,
\begin{equation*}
 \frac{C}{\left(2^nr_0\right)^{p-3}}\geq \left(\frac 34\right)^n |v_0(r_0)|,
\end{equation*}
which shows that $v_0(r_0)=0$. By \eqref{B37} we deduce
$$ \int_{r_0}^{+\infty} (\partial_rv_0)^2+v_1^2\,dr=0,$$
concluding this step.

\EMPH{Step 2: end of the proof} 

Assume $(u_0,u_1)\neq (0,0)$ and let
$$\rho_0:=\inf\left\{ \rho>0\text{ s.t. } \int_{\rho}^{+\infty} \left((\partial_r u_0)^2+u_1^2\right)r^2\,dr=0\right\}\in (0,+\infty).$$
Let $\eps=\min\left(\frac{1}{2\sqrt{C_1}},\delta_1\right)$, where the constants $C_1$ and $\delta_1$ are given by Lemma \ref{L:B8}. Using the definition of $\rho_0$, the continuity of $v_0$ outside the origin and the continuity of the map $\rho\mapsto \rho^{-\frac{p-5}{p-1}}\int_{\rho}^{+\infty} \left((\partial_r u_0)^2+u_1^2\right)r^2\,dr$, we can chose $\rho_1\in (0,\rho_0)$ close to $\rho_0$ such that:
\begin{equation}
 \label{B52}
v_0(\rho_1)\neq 0\text{ and } \rho_1^{-\frac{p-5}{p-1}}\int_{\rho_1}^{+\infty} \left((\partial_r u_0)^2+u_1^2\right)r^2\,dr+\frac{|v_0(\rho_1)|^{p-1}}{\rho_1^{p-3}}<\eps.
\end{equation}
By Lemma \ref{L:B8}, 
\begin{equation}
 \label{B53}
|v_0(\rho_1)|=\left|v_0(\rho_0)-v_0(\rho_1)\right|\leq \frac{\sqrt{C_1}}{\rho_1^{p-3}}|v_0(\rho_1)|^p\leq \sqrt{C_1}\eps|v_0(\rho_1)|,
\end{equation}
a contradiction since $v_0(\rho_1)\neq 0$ and $\eps\sqrt{C_1}<1$. The proof is complete.  
\end{proof}
The following lemma completes the proof of Proposition \ref{P:rigidityA}:
\begin{lemma}
 \label{L:B11}
Let $\ell$ be as in Lemma \ref{L:B9}. Then $\ell=0$.
\end{lemma}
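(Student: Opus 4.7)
The plan is to argue by contradiction: assume $\ell\neq 0$ and show that this forces $u_0=Z_\ell$ on all of $\RR^3\setminus\{0\}$. Since $u_0\in \HcrA\hookrightarrow L^{q_p}(\RR^3)$ by the critical Sobolev embedding while $Z_\ell\notin L^{q_p}(\RR^3)$ by Proposition \ref{P:singular}, this is the desired contradiction. The argument has a ``core step'' establishing $u_0=Z_\ell$ and $u_1=0$ outside some large ball, followed by a rescaling bootstrap to cover the entire punctured space.

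For the core step, set $V=\chi_{r_0}Z_\ell$ (admissible as a background by Remark \ref{R:potentiel}(b)) and $(h_0,h_1)=(u_0-V,u_1)$, truncated inside $r_0$ via $\Psi_{r_0}$ to give data $(\tilde h_0,\tilde h_1)\in\hdot\times L^2$. The asymptotics $|v_0(r)-\ell|=O(r^{3-p})$ from Lemma \ref{L:B9}, $|rZ_\ell(r)-\ell|=O(r^{-2})$ from Proposition \ref{P:singular}, together with the tail bound of Lemma \ref{L:B8} and the pointwise bounds of Propositions \ref{P:pointwise1}--\ref{P:pointwise2}, should yield $\int_{r_0}^{+\infty}((\partial_r(rh_0))^2+(rh_1)^2)\,dr\leq C r_0^{-\gamma}$ for some $\gamma>0$, hence the smallness required by Lemma \ref{L:CP}. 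That lemma then produces a global $\tilde h$ with $\|\vec{\tilde h}-\vec S(\cdot)(\tilde h_0,\tilde h_1)\|_{\hdot\times L^2}\leq\tfrac{1}{100}\|(\tilde h_0,\tilde h_1)\|_{\hdot\times L^2}$; and finite speed of propagation together with the stationarity of $Z_\ell$ on $\{r\geq r_0/2\}$ gives $\tilde h=u-V$ on the exterior cone $\{|x|\geq r_0+|t|\}$. Applying the channel-of-energy inequality (Lemma \ref{L:linear}) to $\vec S(t)(\tilde h_0,\tilde h_1)$ and transferring to $u-V$ yields, for all $t\geq 0$ or all $t\leq 0$,
\[
\int_{r_0+|t|}^{+\infty}\bigl((\partial_r(r(u-V)))^2+(r\partial_tu)^2\bigr)\,dr\geq c\int_{r_0}^{+\infty}\bigl((\partial_r(rh_0))^2+(rh_1)^2\bigr)\,dr.
\]
Expanding the square and using Corollary \ref{C:nochannel} to extract a time sequence along which the $u$-energy outside the cone vanishes, the pointwise bound $|u(R,t)|\leq C/R$ to absorb the boundary term from formula \eqref{B35}, the decay $|G_1'(r)|\leq Cr^{3-p}$ (or the compact support of $u$ when $T_+<\infty$) to handle the $V$-contribution, and Cauchy--Schwarz for the cross term, the left-hand side tends to $0$ along this sequence. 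Hence the right-hand side vanishes; since $rh_0(r)\to 0$ at infinity, this forces $h_0\equiv 0$ and $h_1\equiv 0$ on $\{r\geq r_0\}$, i.e., $u_0=Z_\ell$ and $u_1=0$ there.

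To extend this to all of $\RR^3\setminus\{0\}$, I rescale: for $\mu>0$, $u^\mu(x,t):=\mu^{2/(p-1)}u(\mu x,\mu t)$ also satisfies the hypotheses of Proposition \ref{P:rigidityA} with scaling $\lambda^\mu(t)=\lambda(t/\mu)/\mu$ and nonzero asymptotic constant $\ell^\mu=\ell\,\mu^{(3-p)/(p-1)}$. The smallness required by the core step for $u^\mu$ at radius $R$ reduces, after a change of variables, to
\[
\mu^{(p-5)/(p-1)}R^{-(p-5)/(p-1)}\int_{\mu R}^{+\infty}\bigl((\partial_r u_0)^2+u_1^2\bigr)r^2\,dr\leq\delta_1,
\]
which for any fixed $R>0$ is satisfied once $\mu$ is small enough, since the integral is bounded (as $u_0\in\hdot$ by Corollary \ref{C:nochannel}). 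Running the core step for $u^\mu$ gives $u_0^\mu=Z_{\ell^\mu}$ on $\{r\geq R\}$, equivalently $u_0(s)=Z_\ell(s)$ for $s\geq\mu R$; letting $\mu\to 0$ yields $u_0=Z_\ell$ on $\RR^3\setminus\{0\}$, the desired contradiction. The main obstacle is the channel step: one must simultaneously control, along a single sequence of times, the $u$-energy (via Corollary \ref{C:nochannel}), the $Z_\ell$-energy on the shrinking exterior, and the boundary term from \eqref{B35}, with all error terms remaining strictly smaller than the main term after the $\tfrac{1}{100}$-loss coming from Lemma \ref{L:CP}.
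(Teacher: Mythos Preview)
Your overall strategy---assume $\ell\neq 0$, force $u_0=Z_\ell$ everywhere, contradict $Z_\ell\notin L^{q_p}$---matches the paper's. But both of your main steps have genuine gaps.

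\textbf{Core step: the boundary term is not absorbable in one shot.} Your displayed channel inequality
\[
\int_{r_0+|t|}^{+\infty}\bigl((\partial_r(r(u-V)))^2+(r\partial_tu)^2\bigr)\,dr\geq c\int_{r_0}^{+\infty}\bigl((\partial_r(rh_0))^2+(rh_1)^2\bigr)\,dr
\]
is not what one actually obtains. When you transfer from the linear flow $\vec S(t)(\tilde h_0,\tilde h_1)$ to the nonlinear $g$ via Lemma~\ref{L:CP}, the $\tfrac{1}{100}$-loss hits the full $\hdot\times L^2$ norm of the truncated data $(\tilde h_0,\tilde h_1)=\Psi_{r_0}(h_0,h_1)$. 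By \eqref{B35} this norm squared equals $\int_{r_0}^\infty\bigl((\partial_r H_0)^2+H_1^2\bigr)+\tfrac{1}{r_0}H_0^2(r_0)$, so after sending $|t|\to\infty$ you are left not with $0$ on the left but with
\[
\int_{r_0}^{+\infty}\bigl((\partial_r H_0)^2+H_1^2\bigr)\,dr\leq \frac{C}{r_0}H_0^2(r_0),
\]
which is exactly \eqref{B60} in the paper. This does \emph{not} directly force $H_0\equiv 0$: the right-hand side is an undetermined positive number. The paper closes this by iterating the inequality at dyadic radii $2^n r_1$ to get $|H_0(2^n r_1)|\geq (3/4)^n|H_0(r_1)|$, and then playing this lower bound against the a~priori decay $|H_0(r)|\leq C/r^2$ (coming from Lemma~\ref{L:B9} and Proposition~\ref{P:singular}) to force $H_0(r_1)=0$. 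You flag the boundary term as ``the main obstacle'' but do not supply this (or any) mechanism to kill it; the bounds you list ($|u(R,t)|\leq C/R$ for the outer boundary term, decay of $G_1'$) handle only the $t$-dependent pieces, not the fixed term $\tfrac{1}{r_0}H_0^2(r_0)$.

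\textbf{Extension step: the rescaling bootstrap is empty by scale invariance.} The problem is invariant under the scaling $u\mapsto u^\mu$, and so is every hypothesis entering the core step. In particular, the admissibility threshold ``$R_0$'' for the potential $V$ built from $Z_{\ell^\mu}$ (Remark~\ref{R:potentiel}\eqref{I:V2}) scales like $\mu^{-1}$, since $Z_{\ell^\mu}(r)=\mu^{2/(p-1)}Z_\ell(\mu r)$. Hence the conclusion ``$u_0^\mu=Z_{\ell^\mu}$ on $\{r\geq R\}$'' with an $R$ independent of $\mu$ is not available; the core step for $u^\mu$ only yields $u_0^\mu=Z_{\ell^\mu}$ on $\{r\geq \mu^{-1}R_0\}$, which unscales to $u_0=Z_\ell$ on $\{r\geq R_0\}$---exactly what you already had. (Your displayed smallness condition for the \emph{data} at a fixed $R$ is correct, but the smallness for $V$ on $I=\RR$ is not.) The paper's Step~2 breaks scale invariance differently: once Step~1 gives $\supp(u_0-Z_1,u_1)\subset B_{R_0}$, the quantity $\rho_{\max}=\sup_t\rho(t)$ is finite, one works with $V=\chi_{r_0}Z_1$ on a \emph{finite} time interval $[-\theta_{r_0},\theta_{r_0}]$ (Remark~\ref{R:potentiel}\eqref{I:V1}) near $\rho_{\max}$, and the boundary term $\tfrac{1}{r_1}H_0^2(r_1)$ is controlled using the compact support $H_0(r_1)=-\int_{r_1}^{\rho(0)}\partial_r H_0$ together with $\rho(0)-r_1\ll r_1$.
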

\begin{proof}
 We argue by contradiction. Assume $\ell\neq 0$. In particular $v_0$ is not compactly supported and thus $u$ is global.
Rescaling the solution and changing sign if necessary, we may assume $\ell=1$. Let $Z_1$ be the solution of $\Delta Z_1+|Z_1|^{p-1}Z_1=0$ on $\RR^3\setminus \{0\}$ given by Proposition \ref{P:singular}.

\EMPH{Step 1} Let $R_0>0$ be the large constant given by Remark \ref{R:potentiel}, \eqref{I:V2}. In this step we show:
\begin{equation}
 \label{B54}
\forall t\in \RR,\quad \supp\left(u(t)-Z_1,\partial_t u(t)\right)\subset B_{R_0},
\end{equation}
where $B_{R_0}=\left\{x\in \RR^3\text{ s.t. }|x|\leq R_0\right\}$. 

Assume without loss of generality that $t=0$. Let $h_0=u_0-Z_1$, $h_1=u_1$, $(H_0,H_1)=(rh_0,rh_1)$. Let $V(x,t)=\chi\left(\frac{x}{R_0+|t|}\right)Z_1(x)$ as in Remark \ref{R:potentiel}, \eqref{I:V2}. 

Let $r_1>R_0$ such that
\begin{equation}
 \label{B55}
\int_{r_1}^{+\infty} \left((\partial_rh_0)^2+h_1^2\right)r^2\,dr\leq \delta_0r_1^{\frac{p-5}{p-1}},
\end{equation}  
where $\delta_0$ is given by Lemma \ref{L:CP}. Let $(g_0,g_1)=\Psi_{r_1}(h_0,h_1)=\Psi_{r_1}(u_0-Z_1,u_1)$, where $\Psi_{R}$ is defined in \eqref{B39}. Let $g_{\lin}(t)=S(t)(g_0,g_1)$, and $g(t)$ be the solution of 
\begin{equation}
 \label{B56}
\begin{gathered}
 \partial_t^2g-\Delta g=\left|V+\chi_{R_0}g\right|^{p-1}(V+\chi_{R_0}g)-|V|^{p-1}V\\
\vec{g}_{\restriction t=0}=(g_0,g_1)
\end{gathered}
\end{equation}
given by Lemma \ref{L:CP}. We note that \eqref{B56} is exactly, for $|x|>R_0+|t|$, the equation  
$$ \partial_t^2 g-\Delta g=|Z_1+g|^{p-1}(Z_1+g)-|Z_1|^{p-1}Z_1$$
satisfied by $h=u-Z_1$. Since $(g,\partial_tg)_{\restriction t=0}=(u_0-Z_1,u_1)$ for $r>r_1$, we deduce by finite speed of propagation (see the comments after (2.27) in \cite{DuKeMe12P})
\begin{equation}
 \label{B57}
u(r,t)=Z_1(r)+g(r,t),\quad \partial_t u(r,t)=\partial_t g(r,t),\text{ for } t\in \RR,\quad r\geq r_1+|t|.
\end{equation}
By Lemma \ref{L:CP}, in view of \eqref{B55}, $g$ is globally defined and 
\begin{equation}
 \label{B58}
\sup_{t\in \RR} \left\|\vec{g}(t)-\vec{g}_{\lin}(t)\right\|_{\Hen}\leq \frac{1}{100}\|(g_0,g_1)\|_{\Hen}.
\end{equation} 
In view of Lemma \ref{L:linear}, the following holds for all $t\geq 0$ or for all $t\leq 0$
\begin{multline}
 \label{B59}
\int_{r_1}^{+\infty} \left((\partial_r H_0)^2+H_1^2\right)\,dr \leq 2\int_{r_1+|t|}^{+\infty} \left((\partial_r g_{\lin}(t))^2+(\partial_t g_{\lin}(t))^2\right)r^2\,dr\\
\leq 4\int_{r_1+|t|}^{+\infty} \left((\partial_rg(t))^2+(\partial_t g(t))^2\right)r^2\,dr+\frac{1}{100}\left[\int_{r_1}^{+\infty} (\partial_rH_0)^2+H_1^2\,dr+\frac{1}{r_1}H_0^2(r_1)\right].
\end{multline}
By \eqref{B57} and Corollary \ref{C:nochannel},
\begin{equation*}
 \lim_{t\to\pm \infty} \int_{r+|t|}^{+\infty} \left((\partial_rg(r,t))^2+(\partial_tg(r,t))^2\right)r^2\,dr=0.
\end{equation*}
Hence:
\begin{equation}
 \label{B60}
\int_{r_1}^{+\infty} \left((\partial_rH_0)^2+H_1^2\right)\,dr\leq \frac{1}{16r_1}H_0^2(r_1)
\end{equation}  
if \eqref{B55} is satisfied.

Fix $r_1$ such that \eqref{B55} holds. By the arguments leading to \eqref{B60},
$$\forall s\geq r_1,\quad \int_{s}^{+\infty} \left((\partial_rH_0)^2+H_1^2\right)\,dr\leq \frac{1}{16s} H_0^2(s).$$
Hence:
\begin{equation*}
 \left|H_0(2^{n+1}r_1)-H_0(2^nr_1)\right|\leq 2^{\frac{n}{2}}\sqrt{r_1}\sqrt{\int_{2^nr_1}^{2^{n+1}r_1}(\partial_r H_0)^2\,dr}\leq \frac{1}{4} H_0(2^nr_1).
\end{equation*}
This shows $\left|H_0(2^{n+1}r_1)\right|\geq \frac{3}{4} \left|H_0(2^nr_1)\right|$ and by induction,
$$ \left|H_0(2^nr_1)\right|\geq \left(\frac{3}{4}\right)^n \left|H_0(r_1)\right|.$$
Since by Lemma \ref{L:B9},
$$ |H_0(r)|=|ru_0(r)-rZ_1(r)|\leq |ru_0(r)-1|+|1-rZ_1(r)|\leq \frac{C}{r^2},$$
we get
$$ \forall n\geq 0,\quad \left(\frac{3}{4}\right)^n \left|H_0(r_1)\right|\leq \frac{C}{4^nr_1^2},$$
and thus $H_0(r_1)=0$. By \eqref{B60}, $\supp(H_0,H_1)\subset B_{r_1}$. This holds for any $r_1>R_0$ such that \eqref{B55} holds. As a consequence, the set $S=\{r_1>R_0\text{ s.t. } \supp(H_0,H_1)\subset B_{r_1}\}$ is nonempty and open. Since $S$ is also a closed subset of $(R_0,\infty)$, we get $S=(R_0,\infty)$ and thus as announced $\supp(H_0,H_1)\subset B_{R_0}$.

\EMPH{Step 2}
We show that $(u_0,u_1)=(Z_1,0)$ for $r>0$. Since $u_0\in L^{q_p}$ and, by Proposition \ref{P:singular}, $Z_1\notin L^{q_p}$ this will give the desired contradiction.

Let, for $t\in \RR$,
\begin{equation}
 \label{B62}
\rho(t):=\inf\left\{\rho>0\text{ s.t. } \int_{\rho}^{+\infty} \left((\partial_{r}h(r,t))^2+(\partial_t h(r,t))^2\right)r^2\,dr=0\right\}.
\end{equation} 
and
\begin{equation}
 \label{B63}
\rho_{\max}:=\sup_{t\in \RR} \rho(t),\quad r_0:=\frac{\rho_{\max}}{2}.
\end{equation} 
By Step 1, $\rho_{\max}\leq R_0$. We argue by contradiction, assuming $u_0\not \equiv Z_1$, and thus $\rho_{\max}>0$.

Let $V=\chi_{r_0}Z_1$, and $I_{r_0}=(-\theta_{r_0},\theta_{r_0})$ as in Remark \ref{R:potentiel}, \eqref{I:V1}. Taking a smaller $\theta_{r_0}$ if necessary, we can assume $\rho_{\max}-\frac{\theta_{r_0}}{2}>0$. Chose $t_0\in \RR$ such that $\rho(t_0)\geq \rho_{\max}-\frac{\theta_{r_0}}{2}$. Translating in time and taking a smaller $\theta_{r_0}$ if necessary, we can assume $t_0=0$ and
\begin{equation}
 \label{B64} 
\rho(0)\geq \rho_{\max}-\frac{\theta_{r_0}}{2}>r_0.
\end{equation} 
Recall from Step 1 the notations $h_0$, $h_1$, $H_0$, $H_1$, $g_0$, $g_1$, $g$. 

Chose $r_1\in (r_0,\rho(0))$ such that
$\frac{\rho(0)-r_1}{10r_1}\leq \frac{1}{2}$, $r_1+\theta_{r_0}>\rho_{\max}$, and 
\begin{equation}
 \label{B65}
0<\int_{r_1}^{+\infty} \left((\partial_r h_0)^2+h_1^2\right)r^2\,dr\leq \delta_0r_1^{\frac{p-5}{p-1}}.
\end{equation} 
Arguing as in Step 1 on the interval $I=(-\theta_{r_0},\theta_{r_0})$ instead of $\RR$, we get that the following holds for all $t\in [0,\theta_{r_0}]$ or for all $t\in [-\theta_{r_0},0]$ (see \eqref{B59}):
\begin{equation}
 \label{B66}
\int_{r_1}^{+\infty} \left((\partial_r H_0)^2+H_1^2\right)\,dr\leq 5\int_{r_1+|t|}^{+\infty} \left((\partial_r g(r,t))^2+\left(\partial_t g(r,t)\right)^2\right)r^2\,dr+\frac{1}{10r_1}H_0^2(r_1).
\end{equation} 
Since $r_1+\theta_{r_0}>\rho_{\max}$ we deduce
$$\int_{r_1+|t|}^{+\infty} \left((\partial_rg_0)^2+(g_1)^2\right)r^2\,dr=0\text{ for } t\in \pm \theta_{r_0}.$$
Hence:
\begin{multline*}
\int_{r_1}^{+\infty} (\partial_r H_0)^2+H_1^2\,dr\leq \frac{1}{10r_1} H_0^2(r_1)\leq \frac{1}{10r_1} \left(\int_{r_1}^{\rho(0)}|\partial_r H_0|\,dr\right)^2\\
\leq \frac{1}{10r_1}(\rho(0)-r_1)\int_{r_1}^{\rho(0)}(\partial_r H_0)^2\,dr.
\end{multline*}
Since $\frac{1}{10r_1}(\rho(0)-r_1)\leq \frac{1}{2}$ we deduce 
$$\int_{r_1}^{+\infty}(\partial_r H_0)^2+H_1^2\,dr=0$$
and thus, using the compact support of $H_0$, 
$$ \int_{r_1}^{+\infty}\left((\partial_r h_0)^2+h_1^2\right)\,r^2dr=0,$$
contradicting \eqref{B65}. The proof is complete.
\end{proof}

\section{Exclusion of self-similar, compact blow-up}
\label{S:self-sim}
In this Section we prove Proposition \ref{P:rigidityB}. We argue by contradiction, assuming that there exists a radial solution $u$ of \eqref{CP} with $T_+(u)<\infty$ and such that $K_+$ defined by \eqref{defK+'} is compact. We can assume without loss of generality that $T_+(u)=1$. 

We follow the lines of the proof of \cite[Section 6]{KeMe08}, with important simplifications given by the radiality assumption. 

We will use throughout the proof self-similar variables that we introduce now. Let $\delta\geq 0$ be a small parameter and
\begin{equation*}
 y=\frac{x}{1+\delta-t},\quad s=-\log(1+\delta-t).
\end{equation*}
Let 
\begin{equation*}
 w_{\delta}(y,s):=e^{-\frac{2s}{p-1}} u\left(e^{-s}y,1+\delta-e^{-s}\right), \quad y\in \RR^3,\quad s\in [0,-\log\delta).
\end{equation*}
If $\delta=0$, we will write 
\begin{equation*}
 w(y,s):=w_{0}(y,s)=e^{-\frac{2s}{p-1}} u\left(e^{-s}y,1-e^{-s}\right), \quad y\in \RR^3,\quad s\in [0,\infty).
\end{equation*}
Since $u$ satisfies \eqref{CP}, $w_{\delta}$ satisfies the following equation on $\RR^3\times [0,-\log\delta)$ ($\RR^3\times [0,\infty)$ if $\delta=0$):
\begin{equation}
 \label{C1'}
\partial_s^2w_{\delta} -\frac{1}{(1-r^2)^{\alpha}r^2} \partial_r\left((1-r^2)^{\alpha+1}r^2\partial_rw_{\delta}\right)+2r\partial_r\partial_sw_{\delta}+\frac{p+3}{p-1}\partial_sw_{\delta}+\frac{2(p+1)}{(p-1)^2}w_{\delta}-|w_{\delta}|^{p-1}w_{\delta}=0,
\end{equation} 
where $\alpha=\frac{2}{p-1}-1\in (-1,-1/2)$, and $r=|y|$.

By Lemma 4.15 of \cite{KeMe11}, 
$$ \forall t\in [0,1),\quad \supp \vec{u}(\cdot,t)\subset B_{1-t}.$$
Thus $K$ has compact closure in $H^{s_p}\times H^{s_p-1}$. Furthermore,
\begin{equation}
 \label{C2}
\forall s\in [0,-\log\delta),\quad \supp w_{\delta}\subset B_{1-\delta e^s},
\end{equation} 
and it is easy to check:
\begin{equation}
 \label{C3}
\forall s\in [0,-\log\delta),\quad (w_{\delta}(s),\partial_sw_{\delta}(s))\in H^{s_p}\times H^{s_p-1}.
\end{equation} 
For $\delta=0$, we have
$$w(y,s)=e^{-\frac{2s}{p-1}} u(e^{-s}y,1-e^{-s})=(1-t)^{\frac{2}{p-1}}u((1-t)y,t)$$
and thus (using also that $\supp w(s) \subset B_1$ for all $s\geq 0$),
\begin{equation}
 \label{C4}
K_0:=\{w(s),\; s\in [0,+\infty)\}\text{ has compact closure in }H^{s_p}.
\end{equation} 
We divide the proof into two lemmas.
\subsection{Energy estimates}
\begin{lemma}
 \label{L:C2}
Under the preceding assumptions,
\begin{equation}
 \label{C5}
\int_0^{+\infty} \int_0^1 (1-r^2)^{\alpha-1} \left(\partial_s w(r,s)\right)^2r^2\,dr<\infty.
\end{equation} 
\end{lemma}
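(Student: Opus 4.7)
The plan is to construct a monotone Lyapunov functional in the self-similar variables and then extract the sought weighted space–time bound by splitting the radial integration.

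First, I introduce the functional
$$E(s) := \int_0^1 \left[\tfrac{1}{2}(\partial_s w)^2 + \tfrac{1}{2}(1-r^2)(\partial_r w)^2 + \tfrac{p+1}{(p-1)^2}\,w^2 - \tfrac{1}{p+1}|w|^{p+1}\right](1-r^2)^{\alpha} r^2\,dr,$$
and multiply \eqref{C1'} by $(1-r^2)^{\alpha} r^2\,\partial_s w$, then integrate in $r\in(0,1)$. Noting $\partial_r[r^3(1-r^2)^{\alpha}]=3r^2(1-r^2)^{\alpha}-2\alpha\, r^4(1-r^2)^{\alpha-1}$, the standard integrations by parts in $r$ gather the $\partial_s$ of the bracketed quantities into $\tfrac{d}{ds}E(s)$, and collect the lower-order terms as
$$\frac{dE}{ds} = \frac{2(p-3)}{p-1}\int_0^1 (1-r^2)^{\alpha} r^2(\partial_s w)^2\,dr \;-\; 2\alpha\int_0^1 r^4 (1-r^2)^{\alpha-1}(\partial_s w)^2\,dr \;+\;\mathcal{B}(s),$$
where $\mathcal{B}(s)$ collects the boundary contributions at $r=0$ and $r=1$. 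Both volume integrals on the right are nonnegative, since $p>5$ yields $p-3>0$ and $\alpha\in(-1,-1/2)$ yields $-2\alpha>0$.

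Second, I would use the compactness of $K_+$ in $\Hcr$ to argue that $E(s)$ is uniformly bounded on $[0,\infty)$. Indeed, the compactness provides uniform control of $\|w(s)\|_{H^{s_p}}$ and $\|\partial_s w(s)\|_{H^{s_p-1}}$. Combined with $\supp w(s)\subset B_1$ and $s_p>1$, this gives uniform bounds on $\|\nabla w(s)\|_{L^2}$ via Poincar\'e, and on $\|w(s)\|_{L^{q_p}}$ with $q_p=3(p-1)/2\geq p+1$ via Sobolev, which, together with H\"older against the radially integrable weight $(1-r^2)^{\alpha}r^2$, bound each of the four terms in $E$. Integrating the identity above from $0$ to $S$ and using $E(S)-E(0)\leq C$ uniformly in $S$, I obtain
$$\int_0^\infty\!\!\!\int_0^1 r^4(1-r^2)^{\alpha-1}(\partial_s w)^2\,dr\,ds<\infty \quad\text{and}\quad \int_0^\infty\!\!\!\int_0^1 r^2(1-r^2)^{\alpha}(\partial_s w)^2\,dr\,ds<\infty.$$
The estimate \eqref{C5} then follows by splitting the $r$-integral at $1/2$: on $[1/2,1]$, $r^2\leq 4r^4$ lets the first display absorb the target integrand; on $[0,1/2]$, $(1-r^2)^{\alpha-1}$ and $(1-r^2)^{\alpha}$ are both bounded above and below by positive constants, so the second display suffices.

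The main technical obstacle is justifying the integration by parts near $r=1$: with $\alpha<0$, the boundary term $\bigl[r^3(1-r^2)^{\alpha}(\partial_s w)^2\bigr]_{r=1^-}$ has a singular weight and is not obviously well defined. I would bypass this by first carrying out the entire computation for $w_\delta$ with $\delta>0$: by \eqref{C2}, $\supp w_\delta(\cdot,s)\subset B_{1-\delta e^s}$, so $\mathcal{B}(s)\equiv 0$ and the identity rigorously holds on $[0,-\log\delta)$. Passing to the limit $\delta\to 0^+$, the transfer of the uniform bound on $E$ to $w$ follows from $w_\delta\to w$ in $H^{s_p}$ on compact time intervals, while Fatou's lemma applied to the nonnegative space–time integrands delivers the two displayed bounds for $w=w_0$, and hence \eqref{C5}.
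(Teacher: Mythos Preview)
Your overall strategy matches the paper's, but there is a gap in your claim that $E(s)$ is uniformly bounded. The H\"older argument against the integrable weight $(1-r^2)^\alpha r^2$ works for the $w^2$ and $|w|^{p+1}$ terms once you invoke radial $L^\infty$ control near $r=1$ (available since $w(s)\in H^{s_p}$ with $s_p>1$). But for the term $\frac{1}{2}\int_0^1(1-r^2)^\alpha r^2(\partial_s w)^2\,dr$ this fails: you only have $\partial_s w(s)\in H^{s_p-1}$, and since $s_p-1=\frac{1}{2}-\frac{2}{p-1}<\frac{1}{2}$, radial Sobolev gives neither $L^\infty$ away from the origin nor a Lebesgue exponent high enough to pair with $(1-r^2)^\alpha$ (a direct computation shows the H\"older pairing would require $p<2$). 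Hence $E_\delta(S)$ cannot be bounded uniformly in $\delta$ and $S$ by your method, and the inequality $E(S)-E(0)\leq C$ is unjustified.

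The paper sidesteps this by treating the upper and lower bounds on $\widetilde E_\delta$ asymmetrically. For the upper bound it sends $S\to(-\log\delta)^-$ and uses that $\supp w_\delta(\cdot,s)\subset B_{1-\delta e^s}$ shrinks to a point, forcing $\widetilde E_\delta(S)\to 0$; this avoids ever bounding the problematic $(\partial_s w_\delta)^2$ term directly. For the lower bound $\widetilde E_\delta(0)\geq -C_0$ one only needs to control the single negative term $-\frac{1}{p+1}\int(1-r^2)^\alpha|w_\delta|^{p+1}r^2\,dr$, for which the radial-$L^\infty$ argument \emph{is} available. Incidentally, your two nonnegative terms in $\frac{dE}{ds}$ combine: writing $r^2(1-r^2)^\alpha = r^2(1-r^2)^{\alpha-1}-r^4(1-r^2)^{\alpha-1}$ and using $\frac{2(p-3)}{p-1}+2\alpha=0$, the right-hand side collapses to $-2\alpha\int_0^1 r^2(1-r^2)^{\alpha-1}(\partial_s w)^2\,dr$, so your final split at $r=1/2$ is unnecessary.
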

\begin{proof}
For $\delta>0$,  $s\in [0,-\log \delta)$, we define:
\begin{multline}
 \label{C6} 
\widetilde{E}_{\delta}(s)=\frac{1}{2} \int_0^1 (1-r^2)^{\alpha+1} \left(\partial_r w_{\delta}\right)^2r^2\,dr+\frac{1}{2} \int_0^1 (1-r^2)^{\alpha} \left(\partial_s w_{\delta}\right)^2r^2\,dr\\+\frac{p+1}{p-1} \int_0^1 (1-r^2)^{\alpha} w_{\delta}^2\, r^2\,dr -\frac{1}{p+1}\int_0^1 (1-r^2)^{\alpha}|w_{\delta}|^{p+1}r^2\,dr.
\end{multline} 
 We note that since $(w_{\delta}(s),\partial_sw_{\delta}(s))\in H^{s_p}\times H^{s_p-1}$ for all $s\in [0,-\log\delta)$, and $\supp (w_{\delta},\partial_sw_{\delta})\subset B_{1-\delta}$, $\widetilde{E}_{\delta}(s)$ is well-defined (and finite). We will see in Step 3 that $\widetilde{E}_{\delta}(s)$ is a nondecreasing function of $s$.

\EMPH{Step 1} We show
\begin{equation}
 \label{C7}
\lim_{s\to -\log\delta} \widetilde{E}_{\delta}(s)=0.
\end{equation} 
It is sufficient to prove that each term in the definition \eqref{C6} of $\widetilde{E}_{\delta}(s)$ tends to $0$ as $s$ goes to $-\log\delta$. We will focus on $\int_0^1(1-r^2)^{\alpha} (\partial_sw_{\delta})^2r^2\,dr$, the proof for the other terms is similar and easier. We have
\begin{multline}
 \label{C8}
\partial_sw_{\delta}(y,s)=-\frac{2}{p-1}e^{-\frac{2s}{p-1}} u(e^{-s}y,1+\delta-e^{-s})\\
+e^{-\frac{2s}{p-1}-s} \partial_t u(e^{-s}y,1+\delta-e^{-s})-e^{-\frac{2s}{p-1}-s}r (\partial_r u)(e^{-s}y,1+\delta-e^{-s}).
\end{multline} 
Using that $1-r\geq \delta$ and $e^s\leq \frac{1}{\delta}$ if $s\in [0,-\log\delta)$ and $(e^{-s}y,1+\delta-e^{-s})\in \supp u$, we get that there exists $C_{\delta}>0$, depending only on $\delta>0$, such that
\begin{multline}
 \label{C9}
\int_0^1 (1-r^2)^{\alpha} (\partial_sw_{\delta})^2r^2\,dr\\
\leq C_{\delta}\int_0^1 \left[u^2(r,1+\delta-e^{-s})+(\partial_t u)^2(r,1+\delta-e^{-s})+(\partial_r u)^2(r, 1+\delta-e^{-s})\right]r^2\,dr.
\end{multline} 
Since $1+\delta-e^{-s}\to 1$ as $s\to -\log\delta$ and $K_+$ has compact closure in $H^{s_p}\times H^{s_p-1}\subset H^1\times L^2$ we deduce, using that $s_p>1$ (i.e. $\frac{2}{p-1}<\frac{1}{2}$) that the right-hand side of \eqref{C9} goes to zero as $s\to-\log\delta$, concluding this step.

\EMPH{Step 2} We show that there exists a constant $C_0>0$ independent of $\delta\in (0,1)$ such that
\begin{equation}
 \label{C10}
\forall \delta>0, \quad \widetilde{E}_{\delta}(0)\geq -C_0.
\end{equation} 
Indeed, the only nonnegative term in the definition of $\widetilde{E}_{\delta}(0)$ is 
\begin{multline*}
 -\int_0^1 (1-r^2)^{\alpha}|w_{\delta}|^{p+1}(r,0)r^2\,dr=-\int_0^1 (1-r^2)^{\alpha} |u|^{p+1}(r,\delta)r^2\,dr\\
=-\int_0^{\frac{1}{2}} (1-r^2)^{\alpha} |u|^{p+1}(r,\delta)r^2\,dr-\int_{\frac{1}{2}}^1 (1-r^2)^{\alpha} |u|^{p+1}(r,\delta)r^2\,dr.
\end{multline*}
Since $\alpha>-1$, $\int_{\frac{1}{2}}^1 (1-r^2)^{\alpha} |u|^{p+1}(r,\delta)r^2\,dr\leq C\|u(\cdot,\delta)\|^{p+1}_{L^{\infty}(1/2,1)}\leq C\|u(\cdot,\delta)\|_{H^{s_p}}^{p+1}$ by a one-dimensional Sobolev inequality.
Furthermore, by a critical three dimensional Sobolev inequality,
$$ \int_0^{\frac{1}{2}} (1-r^2)^{\alpha} |u|^{p+1}(r,\delta)r^2\,dr\leq C\|u(\cdot,\delta)\|_{L^{p+1}}^{p+1}\leq C \|u(\cdot,\delta)\|_{\dot{H}^{\frac{3(p-1)}{2(p+1)}}}^{p+1}\leq C\|u(\cdot,\delta)\|_{H^{s_p}}^{p+1}.$$
Since $u(\cdot,t)$ is bounded in $H^{s_p}$ for $t\in [0,1)$, \eqref{C10} follows.

\EMPH{Step 3} We show
\begin{equation}
 \label{C11} \int_0^{+\infty} (1-r^2)^{\alpha-1}(\partial_sw)^2(s,r)r^2\,dr\leq -\frac{C_0}{2\alpha},
\end{equation} 
where $C_0$ is the constant of Step 2. Indeed, multiplying equation \eqref{C1'} by $(1-r^2)^{\alpha}$ and integrating with respect to $r^2dr$ , we get
$$ \frac{d}{ds} \widetilde{E}_{\delta}(s)=-2\alpha\int_0^1 (1-r^2)^{\alpha-1}(\partial_sw_{\delta})^2(s,r)r^2\,dr.$$
Integrating between $0$ and $-\log\delta$ and using Steps 1 and 2, we deduce, for $\delta>0$,
\begin{equation}
 \label{C12} -2\alpha \int_0^{-\log\delta} \int_0^1 (1-r^2)^{\alpha-1} (\partial_s w_{\delta})^2(r,s)r^2\,dr\,ds\leq C_0,
\end{equation} 
i.e \eqref{C11} with $w$ replaced by $w_{\delta}$. Fix a large constant $A>0$ and let
\begin{equation}
 \label{C13}
B(\delta):=\int_0^A \int_0^{1-A^{-1}} (1-r^2)^{\alpha-1} (\partial_sw_{\delta})^2(r,s)r^2\,dr\,ds\leq -\frac{C_0}{2\alpha}.
\end{equation} 
The estimate \eqref{C11} will follow from the following convergence result:
\begin{equation}
 \label{C14}
\lim_{\delta\to 0} B(\delta)=B(0):=\int_0^A \int_0^{1-A^{-1}} (1-r^2)^{\alpha-1} (\partial_sw)^2(r,s)r^2\,dr\,ds.
\end{equation} 
Let us show \eqref{C14}. We have:
$$w_{\delta}(y,s)=\left(1-\delta e^s\right)^{-\frac{2}{p-1}} w\left(\frac{y}{1-\delta e^s},-\log(e^{-s}-\delta)\right)$$
and thus
\begin{align*}
 \partial_sw_{\delta}(y,s)&=\frac{2}{p-1}\delta e^s\left(1-\delta e^s\right)^{-\frac{2}{p-1}} w\left(\frac{y}{1-\delta e^s},-\log(e^{-s}-\delta)\right)\\
&\qquad + \left(1-\delta e^s\right)^{-\frac{2}{p-1}-1} \partial_sw\left(\frac{y}{1-\delta e^s},-\log(e^{-s}-\delta)\right)\\
&\qquad +\left(1-\delta e^s\right)^{-\frac{2}{p-1}-2} \delta e^s r\partial_r w\left(\frac{y}{1-\delta e^s},-\log(e^{-s}-\delta)\right)\\
&=W_1+W_2+W_3.
\end{align*}
Since $r$ and $s$ are bounded by $1-A^{-1}$ and $A$ respectively in the integrals defining $B(\delta)$, and $w(s)$, $r\partial_rw(s)$ are bounded in $L^2(r^2\,dr)$ uniformly with respect to $s\in [0,+\infty)$, it is easy to see that
$$\lim_{\delta \to 0} \int_0^A \int_0^{1-A^{-1}} (1-r^2)^{\alpha-1} \left(W_1^2+W_3^2\right)r^2\,dr\,ds=0.$$
Furthermore,
\begin{equation*}
 \lim_{\delta\to 0} \int_0^A \int_0^{1-A^{-1}}(1-r^2)^{\alpha-1}W_2^2\,r^2\,dr\,ds=\int_0^A \int_0^{1-A^{-1}} (1-r^2)^{\alpha-1}(\partial_s w(r,s))^2r^2\,dr\,ds,
\end{equation*}
by the change of variable $(\rho,\sigma)=\left(\frac{r}{1-\delta e^s},-\log(e^{-s}-\delta)\right)$. This proves as announced that $B(\delta)\to B(0)$ as $\delta\to 0$, concluding Step 3 and the proof of Lemma \ref{L:C2}.
\end{proof}
\subsection{Convergence to a stationary solution}
\begin{lemma}
\label{L:C3}
 There exists $w_*\in H^{s_p}\setminus\{0\}$, with $\supp w_*\subset B_1$ and a sequence $s_n\to +\infty$ such that $w(s_n)\to w_*$ in $H^{s_p}$ and
\begin{equation}
 \label{C15}
-\frac{1}{(1-r^2)^{\alpha}r^2}\frac{\partial}{\partial r} \left((1-r^2)^{\alpha+1} r^2\partial_r w_*\right)+\frac{2(p+1)}{(p-1)^2} w_*-|w_*|^{p-1}w_*=0.
\end{equation} 
\end{lemma}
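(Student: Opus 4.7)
The plan is to extract a sequence $s_n\to\infty$ along which both $w(\cdot,s_n)$ converges in $H^{s_p}$ and $\partial_s w(\cdot,s_n)$ becomes small, then pass to the limit in the self-similar equation \eqref{C1'} to obtain the stationary equation \eqref{C15}, and finally rule out the trivial limit by means of the small-data Cauchy theory for \eqref{CP}.

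\textbf{Extraction.} Set $g(s):=\int_0^1(1-r^2)^{\alpha-1}(\partial_s w(r,s))^2 r^2\,dr$. By Lemma \ref{L:C2}, $g\in L^1([0,\infty))$, so there exists $s_n\to\infty$ with $g(s_n)\to 0$ and, for any fixed $T>0$, $\int_{s_n-T}^{s_n+T} g(s)\,ds\to 0$. Since $K_0$ has compact closure in $H^{s_p}$, a further extraction yields $w(\cdot,s_n)\to w_*$ in $H^{s_p}$; the support inclusion $\supp w_*\subset B_1$ follows from \eqref{C2} with $\delta=0$.

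\textbf{Passing to the stationary equation.} I consider the translates $\tilde w_n(y,s):=w(y,s_n+s)$ for $s\in[-T,T]$, which satisfy \eqref{C1'}, are uniformly bounded in $L^\infty([-T,T];H^{s_p})$, and obey $\int_{-T}^T \|\partial_s\tilde w_n(\cdot,s)\|^2_{L^2((1-r^2)^{\alpha-1}r^2\,dr)}\,ds\to 0$. From this last bound and $\tilde w_n(\cdot,0)\to w_*$ in $H^{s_p}$, the elementary estimate $\|\tilde w_n(\cdot,s)-\tilde w_n(\cdot,0)\|_{L^2_{\loc}}\le \sqrt{|s|}\,\|\partial_s\tilde w_n\|_{L^2L^2_{\loc}}$ forces $\tilde w_n\to w_*$ strongly in $C([-T,T];L^2_{\loc})$, with limit independent of $s$. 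Testing \eqref{C1'} against $(1-r^2)^\alpha r^2\varphi(r)\psi(s)$ for $\varphi\in C_c^\infty([0,1))$ radial and $\psi\in C_c^\infty((-T,T))$, I transfer derivatives off $\tilde w_n$ by integration by parts: the $\partial_s^2\tilde w_n$-term becomes $\int\int\tilde w_n\varphi(1-r^2)^\alpha r^2\psi''\,dr\,ds$, whose limit is $\bigl(\int w_*\varphi(1-r^2)^\alpha r^2\,dr\bigr)\bigl(\int\psi''\,ds\bigr)=0$ since $\psi$ has compact support; the $\partial_s\tilde w_n$-term and (after an additional integration by parts in $r$) the mixed term $2r\partial_r\partial_s\tilde w_n$ reduce to integrals with $\partial_s\tilde w_n$ paired against bounded factors, which vanish by $L^2$-smallness. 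The nonlinear term converges by strong $H^{s_p}$-convergence and the embedding $H^{s_p}\hookrightarrow L^{q_p}$, and the spatial terms pass to the limit similarly. Dividing by $\int\psi\,ds$ and varying $\varphi$ yields \eqref{C15} in the distributional sense on $(0,1)$; standard elliptic regularity upgrades this.

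\textbf{Nontriviality and main obstacle.} Assume for contradiction that $w_*\equiv 0$. Then $\|w(\cdot,s_n)\|_{\HcrA}\to 0$, so by scale-invariance of the critical norm $\|u(\cdot,t_n)\|_{\HcrA}\to 0$, where $t_n:=1-e^{-s_n}\to 1$. Differentiating the definition of $w$ gives the pointwise identity
\[
e^{-s_n(1+\frac{2}{p-1})}\,\partial_t u(e^{-s_n}y,t_n)=\partial_s w(y,s_n)+\tfrac{2}{p-1}w(y,s_n)+y\cdot\nabla_y w(y,s_n),
\]
and combining the weighted-$L^2$ smallness of $\partial_s w(\cdot,s_n)$ with the compactness of $K_+$ in $\HcrA\times\HcrB$ forces $\|\partial_t u(\cdot,t_n)\|_{\HcrB}\to 0$. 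The small-data Cauchy theory for \eqref{CP} then extends $u$ past $t=1$, contradicting $T_+(u)=1$, so $w_*\neq 0$. The principal obstacle is the control of $\partial_s^2\tilde w_n$ in the passage to the limit of the second step: only $L^2$-smallness of the first time derivative is available, and this is overcome by transferring all time derivatives onto the test function $\psi$ via integration by parts and exploiting the $s$-independence of the strong limit. A secondary subtlety is upgrading weighted-$L^2$ smallness of $\partial_s w$ to $\HcrB$-smallness of $\partial_t u$, which is handled by passing to subsequential limits of both components of the pair in the compact set $\overline{K_+}$.
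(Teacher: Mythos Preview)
Your proof is correct but takes a genuinely different route from the paper's in the derivation of \eqref{C15}. The paper does \emph{not} pass to the limit directly in the self-similar equation \eqref{C1'}; instead it returns to the original equation \eqref{CP}. Concretely, it extracts (via compactness of $K_+$) a limit $(v_0,v_1)\in\Hcr$ of the rescaled data at times $t_n=1-e^{-s_n}$, lets $v$ be the solution of \eqref{CP} with this data, and uses continuous dependence (Theorem~2.11 of \cite{KeMe11}) to obtain the identity $w_*(y)=e^{-2T/(p-1)}v(e^{-T}y,1-e^{-T})$ for small $T\ge 0$. Since $v$ solves \eqref{CP}, this forces $w_*$ to be an $s$-independent solution of \eqref{C1'}, i.e.\ a solution of \eqref{C15}. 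Nontriviality then comes for free: $w_*\equiv 0$ would give $v\equiv 0$, hence $(v_0,v_1)=0$, hence $\vec u(\cdot,t_n)\to 0$ in $\Hcr$, contradicting $T_+=1$.

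Your approach of testing \eqref{C1'} against $(1-r^2)^\alpha r^2\varphi(r)\psi(s)$ and moving all $s$-derivatives onto $\psi$ also works, but two points in your write-up are compressed. First, to pass the spatial and nonlinear terms to the limit you need $\tilde w_n\to w_*$ in $C([-T,T];H^{s_p})$, not merely at $s=0$; this does follow, because $\tilde w_n(\cdot,s)\in K_0$ for every $s$ and $K_0$ is compact in $H^{s_p}$, so the uniform $L^2$-limit $w_*$ upgrades to a uniform $H^{s_p}$-limit. Second, your nontriviality argument is more circuitous: the paper's detour through $v$ delivers the equation and $w_*\neq 0$ simultaneously, whereas you must separately combine $g(s_n)\to 0$ with compactness of $K_+$ to kill the $\HcrB$-component. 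The paper's route is shorter and leans on the well-posedness theory already in hand; yours has the virtue of staying entirely in self-similar variables and making the role of Lemma~\ref{L:C2} more transparent.
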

\begin{proof}
 \EMPH{Step 1: convergence to $w_*$}
In this step we show that there exists $s_n\to+\infty$ and $w_*\in H^{s_p}$ such that
\begin{equation}
 \label{C16}
\forall T\geq 0,\quad \lim_{n\to\infty} \left\| w(\cdot,s_n+T)-w_*\right\|_{H^{s_p}}=0.
\end{equation} 
Indeed, using that the sequence $\left\{w(\cdot,n)\right\}_n$ stays in a compact subset of $H^{s_p}$, we get a subsequence $\left\{w(\cdot,s_n)\right\}_n$ and an element $w_*$ of $H^{s_p}$ such that
\begin{equation}
 \label{C17}
\lim_{n\to\infty} \left\| w(\cdot,s_n)-w_*\right\|_{H^{s_p}}=0.
\end{equation} 
Furthermore, by Cauchy-Schwarz inequality,
\begin{multline*}
 \int |w(y,s_n)-w(y,s_n+T)|^2\,dy\leq \int \left|\int_{s_n}^{s_n+T} \partial_s w(y,s)\,ds\right|^2\,dy\leq T\int_{s_n}^{s_n+T} \int |\partial_sw(y,s)|^2\,dy\,ds.
\end{multline*}
By Lemma \ref{L:C2}, the right-hand side of the preceding inequality goes to $0$ as $n$ goes to infinity. In view of \eqref{C17}, we get
$$\lim_{n\to\infty} \|w(\cdot,s_n+T)-w^*\|_{L^2}=0,$$
and \eqref{C16} follows by compactness in $H^{s_p}$. 

\EMPH{Step 2: elliptic equation} We show that $w_*$ satisfies \eqref{C15}. 

Let 
$$ \left(v_{0n}(y),v_{1n}(y)\right)=\left(e^{-\frac{2s_n}{p-1}} u\left(e^{-s_n}y,1-e^{-s_n}\right),e^{-\frac{2s_n}{p-1}-s_n} \partial_t u\left(e^{-s_n}y,1-e^{-s_n}\right)\right),$$
and $v_n$ be the solution of \eqref{CP} with initial data $(v_{0n},v_{1n})$. Extracting a subsequence if necessary, we can assume
$$ \lim_{n\to \infty} (v_{0n},v_{1n})=(v_0,v_1) \text{ in }\Hcr,$$
where $(v_0,v_1)\in \Hcr$. We let $v$ be the solution of \eqref{CP} with initial data $(v_0,v_1)$. Let $s\in [0,T_+(v))$. By standard perturbation theory (see Theorem 2.11 of \cite{KeMe11}), for large $n$, $s$ is in the interval of existence of $v_n$ and
\begin{equation}
 \label{C18}
\lim_{n\to\infty} \left\|v_n(\cdot,s)-v(\cdot,s)\right\|_{\dot{H}^{s_p}}=0.
\end{equation} 
Furthermore, if $s<1$ and $T$ is such that $s=1-e^{-T}$, we have
$$ v_n(y,s)=e^{-\frac{2 s_n}{p-1}} u\left(e^{-s_n}y,1-e^{-s_n}+e^{-s_n}s\right)$$
and thus
\begin{equation}
 \label{C19}
w(y,s_n+T)=e^{-\frac{2s_n}{p-1}}e^{-\frac{2T}{p-1}}u\left(e^{-s_n-T}y,1-e^{-s_n-T}\right)=e^{-\frac{2T}{p-1}}v_n\left(e^{-T}y,1-e^{-T}\right).
\end{equation} 
By Step 1, $w(\cdot,s_n+T)\to w_*$ as $n\to\infty$ in $H^{s_p}$. Letting $n\to\infty$ in \eqref{C19}, we get that for small $T\geq 0$,
\begin{equation}
 \label{C20}
w_*(y)=e^{-\frac{2T}{p-1}} v\left(e^{-T}y,1-e^{-T}\right).
\end{equation} 
Since $v$ satisfies \eqref{CP}, we see that $w^*$ is as solution of \eqref{C1'} which is independent of $s$, which gives exactly the announced elliptic equation \eqref{C15}.

\EMPH{Step 3: further properties of $w_*$} 

Obviously, since supp $w(\cdot,s)\subset B_1$ for all $s\geq 0$, we have $\supp w_*\subset B_1$. It remains to prove that $w_*$ is not identically $0$. By \eqref{C20}, if $w_*\equiv 0$, then $v\equiv 0$. As a consequence,
$$ \lim_{n\to\infty}\left\| (v_{0n},v_{1n})\right\|_{\Hcr} =0.$$
By the definition of $(v_{0n},v_{1n})$, we get
$$\lim_{n\to\infty} \left\| \vec{u}(\cdot, 1-e^{-s_n})\right\|_{\Hcr}=0$$
which, by the small data theory for \eqref{CP}, shows that $u\equiv 0$, contradicting our assumptions.
\end{proof}
\subsection{End of the proof}
Let us denote by $w_*'$ the radial derivative $\partial_rw_*$ of $w_*$.

Since $w_*\in H^{s_p}$ and $w_*$ is radial, we deduce that $w_*$ is continuous outside $r=0$. Furthermore, using that $w_*(r)=0$ for $r\geq 1$, we obtain, for $\frac{1}{2}\leq r\leq 1$,
$$|w_*(r)|=\left|\int_r^1 w_*'(\rho)\,d\rho\right|\leq (1-r^{1/2})\sqrt{\int_r^1 (w_*')^2\,d\rho}\leq C(1-r)^{1/2},$$
where the positive constant $C$ is independent of $r\in [1/2,1]$. By \eqref{C15},
$$ \left|\frac{d}{dr}\left((1-r^2)^{\alpha+1} r^2 w_*'\right)\right|\leq C(1-r)^{\frac{1}{2}+\alpha}.$$
Since $\alpha>-1$ we get, integrating between $r\in [1/2,1)$ and $1$, that $w_*'$ is continuous for $r\geq 1/2$ and that there exists $\ell_*\in \RR$ such that
\begin{equation}
 \label{C21} \left|(1-r^2)^{\alpha+1} r^2w_*'(r)-\ell_*\right|\leq C(1-r)^{\frac{3}{2}+\alpha},\quad \frac 12\leq r< 1.
\end{equation} 
If $\ell_*\neq 0$, then $r^2w_*'(r)\sim \frac{\ell_*}{(1-r^2)^{\alpha+1}}=\frac{\ell_*}{(1-r^2)^{\frac{2}{p-1}}}$ as $r\to 1$. This contradicts the fact that $w_*'\in L^{\frac{p-1}{2}}$ (which follows from $w_*\in \dot{H}^{s_p}$ and Sobolev embedding). 

Thus $\ell_*=0$ and by \eqref{C21},
\begin{equation}
 \label{C22}
\left|w_*'(r)\right|\leq C(1-r)^{1/2},\quad \frac{1}{2}\leq r\leq 1.
\end{equation} 
By \eqref{C15}
\begin{multline*}
 (1-r)^{\alpha+1}\left|w_*'\right|\leq C\int_r^1 (1-\rho)^{\alpha} |w_*(\rho)|\,d\rho\leq C\int_r^1 (1-\rho)^{\alpha} \int_{\rho}^1 \left|w_*'(\sigma)\right|\,d\sigma\,d\rho\\
\leq C(1-r)^{\alpha+1}\int_r^1 \left|w_*'(\sigma)\right|\,d\sigma.
\end{multline*}
Hence $\left|w_*'(r)\right|\leq C\int_r^1 \left|w_*'(\sigma)\right|\,d\sigma$. Since $w_*'$ is continuous on $[1/2,1]$ and $w_*(1)=w_*'(1)=0$, we deduce that $w_*=0$ close to $r=1$, and thus $w_*\equiv 0$ by standard unique continuation. This is a contradiction with Lemma \ref{L:C3}, concluding the proof of Proposition \ref{P:rigidityB}.

\bibliographystyle{alpha} 
\bibliography{toto}
\end{document}